\documentclass{article}
\usepackage[a4paper, total={7in, 10in}]{geometry}
\usepackage{amsmath,amsthm,amssymb,mathrsfs,amstext, titlesec,enumitem, comment, graphicx, color, xcolor, stmaryrd,mathabx}
\usepackage{hyperref}
\usepackage{xpatch}
\usepackage{tikz-cd}
\makeatletter
\def\Ddots{\mathinner{\mkern1mu\raise\p@
\vbox{\kern7\p@\hbox{.}}\mkern2mu
\raise4\p@\hbox{.}\mkern2mu\raise7\p@\hbox{.}\mkern1mu}}
\makeatother

\titleformat{\section}
{\normalfont\Large\bfseries}{\thesection}{1em}{}
\titleformat*{\subsection}{\Large\bfseries}
\titleformat*{\subsubsection}{\large\bfseries}
\titleformat*{\paragraph}{\large\bfseries}
\titleformat*{\subparagraph}{\large\bfseries}

\makeatother
\theoremstyle{Theorem}
\newtheorem{thm}{Theorem}[section]
\newtheorem{lem}[thm]{Lemma}

\newtheorem{cor}[thm]{Corollary}

\theoremstyle{definition}
\newtheorem{defn}[thm]{Definition}

\newcommand{\N}{\mathbb{N}}
\newcommand{\Z}{\mathbb{Z}}
\newcommand{\F}{\mathcal{P}_{f}\left(\mathbb{N}\right)}
\newcommand{\G}{\mathcal{P}_{f}}


\date{\vspace{-5ex}}

\begin{document}

\title{{\bf Multidimensional Stronger Central Sets Theorem and its Polynomial Extension}}

\author{ 
Sayan Goswami\\  \textit{ sayan92m@gmail.com}\footnote{Ramakrishna Mission Vivekananda Educational and Research Institute, Belur Math,
Howrah, West Benagal-711202, India.}\\
\and
Sourav Kanti Patra\\ \textit{souravkantipatra@gmail.com}\footnote{ Kishori Sinha Mahila College, Aurangabad, Bihar-824101, India.}
}


\makeatother

\providecommand{\corollaryname}{Corollary}
\providecommand{\definitionname}{Definition}
\providecommand{\examplename}{Example}
\providecommand{\factname}{Fact}
\providecommand{\lemmaname}{Lemma}
\providecommand{\propositionname}{Proposition}
\providecommand{\remarkname}{Remark}
\providecommand{\theoremname}{Theorem}
\newcommand{\RomanNumeralCaps}[1]{\MakeUppercase{\romannumeral #1}}

\maketitle
\begin{abstract}

We establish and fully characterize the multidimensional extension of the Stronger Central Sets Theorem, originally proved in \cite{dhs}. Additionally, we develop a polynomial generalization of this result. Our approach utilizes tools from the algebra of the Stone–\v{C}ech compactification of discrete semigroups. Several applications of these results are also discussed.

\end{abstract}
{\bf Mathematics subject classification 2020:} 05D10, 05C55, 22A15, 54D35.\\
{\bf Keywords:} Central sets, $C$-sets, Stronger Central Sets Theorem,  Algebra of the Stone-\v{C}ech Compactifications of discrete semigroups.

\section{Introduction}
Throughout this article, we denote by $\mathbb{N}$ the set of all positive integers. 
For any $r \in \mathbb{N}$, an $r$-coloring of a set $X$ is a partition of $X$ into $r$ disjoint subsets. 
A coloring is said to be finite if it corresponds to an $r$-coloring for some $r \in \mathbb{N}$. 
A subset $F \subseteq X$ is called \textbf{monochromatic} if it is entirely contained within a single color class.  

Let $(S, \cdot)$ be a semigroup, and let $\mathcal{F}$ be a family of subsets of $S$. 
The family $\mathcal{F}$ is said to be \textbf{partition regular (P.R.)} if, for any finite coloring of $S$, there exists a member $F \in \mathcal{F}$ that is monochromatic.  

A fundamental result in arithmetic Ramsey theory is \textbf{van der Waerden's theorem} \cite{key-8}, 
which states that the family of all arithmetic progressions of any given length is P.R. over $\mathbb{N}$. 
Another landmark result, due to \textbf{Schur} \cite{sc}, asserts that the family 
\[
\left\lbrace \{x, y, x+y\} : x \neq y \in \mathbb{N}\right\rbrace
\]
is P.R. over $\mathbb{N}$.  

A nonlinear extension of van der Waerden's theorem, known as the \textbf{Polynomial van der Waerden Theorem}, 
was established by \textbf{Bergelson and Leibman} in \cite{bl}. 
They employed techniques from topological dynamics and PET induction to prove this result.\footnote{The authors actually proved a generalized form of the much stronger \textbf{Szemerédi's Theorem}, but this aspect will not be discussed in the present paper.}  

Throughout this article, we denote by $\mathbb{P}$ the set of all polynomials with no constant term.  

\begin{thm}[Polynomial van der Waerden Theorem]\label{pvdw}
Let $r \in \mathbb{N}$, and suppose that $\mathbb{N}$ is partitioned into $r$ color classes, i.e.,  
\[
\mathbb{N} = \bigcup_{i=1}^{r} C_i.
\]  
Then, for any finite collection of polynomials $F \subset \mathbb{P}$, there exist $a, d \in \mathbb{N}$ and some $1 \leq j \leq r$ such that  
\[
\{a + p(d) : p \in F\} \subset C_j.
\]
\end{thm}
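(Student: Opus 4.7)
My plan is to prove the theorem via the algebra of the Stone--\v{C}ech compactification $\beta\mathbb{N}$, which is the framework of this paper. The first step is standard: pick a minimal idempotent $p$ in the left-topological semigroup $(\beta\mathbb{N}, +)$. Since $\{C_1, \ldots, C_r\}$ partitions $\mathbb{N}$, exactly one color class, say $C_j$, lies in $p$. By definition $C_j$ is a \emph{central} set in $(\mathbb{N}, +)$, and so the theorem reduces to the claim that every central set is ``polynomially rich'' in the sense of containing a configuration of the form $\{a + q(d) : q \in F\}$.

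Next I would invoke the polynomial Central Sets Theorem of Bergelson--Hindman: for every central set $C \subseteq \mathbb{N}$ and every finite $F \subset \mathbb{P}$, there exist $a, d \in \mathbb{N}$ with $\{a + q(d) : q \in F\} \subseteq C$. The standard ultrafilter route is to introduce the notion of a \emph{polynomial $J$-set}, namely a set that contains an $\{a + q(d) : q \in F\}$-configuration for every finite $F \subset \mathbb{P}$, and to verify that the collection of ultrafilters all of whose members are polynomial $J$-sets forms a closed two-sided ideal of $(\beta\mathbb{N}, +)$. This ideal then meets every minimal left ideal, and in particular every minimal idempotent (such as our $p$) lies in it; applying this to $C_j \in p$ yields the desired configuration.

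The main obstacle is establishing partition regularity of polynomial $J$-sets, since translation-invariance under $+$ is essentially formal but partition regularity is genuinely combinatorial. The classical route passes through the Bergelson--Leibman polynomial Szemer\'edi theorem, which produces polynomial configurations inside any set of positive upper Banach density, combined with the elementary fact that one cell of any finite partition of a piecewise syndetic set is itself piecewise syndetic (and hence of positive upper Banach density). An alternative, more self-contained approach is to run a direct PET-induction argument on the complexity of the polynomial family $F$, using ordinary van der Waerden's theorem as the base case and performing a color-focusing argument at each inductive step to reduce to strictly lower complexity. Either way, once the partition regularity of polynomial $J$-sets is in hand, the closure and ideal properties in $\beta\mathbb{N}$ are routine, and the theorem follows.
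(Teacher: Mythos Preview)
The paper does not prove this theorem. It is stated in the introduction purely as a background result, attributed to Bergelson and Leibman, with the remark that their original argument used topological dynamics and PET induction. There is therefore no ``paper's proof'' against which to compare your proposal.

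Your outline is mathematically coherent, but it outsources all of the genuine content. Deducing the result from the polynomial Central Sets Theorem (which, incidentally, is due to Hindman--McCutcheon rather than Bergelson--Hindman), or from the fact that piecewise syndetic sets are polynomial $J$-sets, is formally valid---but each of those inputs is at least as deep as the Polynomial van der Waerden Theorem itself. The route through the Bergelson--Leibman polynomial Szemer\'edi theorem invokes a strictly stronger density statement to get a partition statement, and the ``self-contained'' PET-induction route you sketch \emph{is} the original Bergelson--Leibman argument (or Walters' purely combinatorial reworking of it). So what you have written is less a proof than a catalogue of stronger theorems from which this one follows. In the context of this paper that is entirely appropriate, since the theorem is only quoted as motivation and the paper's own results take the polynomial $J$-set machinery as a black box anyway; but you should be aware that your proposal does not supply an argument independent of the citation the paper already gives.
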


 An infinitary extension of \textbf{Schur's theorem} motivates the following definition of \textbf{IP sets}.  

\begin{defn}[\textbf{IP Set} {\cite{key-11}}]
\begin{enumerate}
    \item For any set $X$, let $\mathcal{P}_{f}(X)$ denote the collection of all nonempty finite subsets of $X$.  
    \item Let $(S,+)$ be a commutative semigroup. A subset $A \subseteq S$ is called an \textbf{IP set} if there exists an injective sequence $\langle x_n \rangle_n$ in $S$ such that  
    \[
    A = FS\left(\langle x_n \rangle_n \right) = \left\{ \sum_{n \in \alpha} x_n : \alpha \in \mathcal{P}_{f}(\mathbb{N}) \right\}.
    \]
    For brevity, given $\alpha \in \mathcal{P}_{f}(\mathbb{N})$, we write  
    \[
    x_\alpha = \sum_{n\in \alpha} x_n.
    \]  
\end{enumerate}
\end{defn}

A fundamental result in \textbf{Ramsey theory}, known as the \textbf{Finite Sums Theorem} or \textbf{Hindman’s Theorem} \cite{key-6}, asserts the following:  

\begin{thm}[\textbf{Hindman’s Theorem}]
If $(S,+)$ is a commutative semigroup, then for every finite coloring of $S$, there exists a monochromatic IP set.  
\end{thm}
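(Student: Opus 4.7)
The plan is to apply the algebra of the Stone-\v{C}ech compactification $\beta S$, whose points are the ultrafilters on $S$. The semigroup operation on $S$ extends to $\beta S$ via
\[
B \in p + q \iff \{x \in S : -x + B \in q\} \in p, \quad \text{where } -x + B := \{y \in S : x + y \in B\},
\]
making $(\beta S, +)$ a compact right-topological semigroup. By Ellis's theorem such a semigroup always contains an idempotent, so I fix $p \in \beta S$ with $p + p = p$.

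Given the coloring $S = C_1 \cup \cdots \cup C_r$, exactly one color class $C_i$ belongs to $p$ since $p$ is an ultrafilter; call it $A$. The key combinatorial lemma I would prove is: for any idempotent $p$ and any $A \in p$, the set
\[
A^{\star} := \{x \in A : -x + A \in p\}
\]
lies in $p$, and moreover for every $x \in A^{\star}$ one has $-x + A^{\star} \in p$. Both assertions follow by unwinding the equation $p = p + p$ applied to $A$, and then applying it once more to the translate.

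With the lemma in hand, I build $\langle x_n \rangle$ recursively. Pick any $x_1 \in A^{\star}$. Assuming distinct $x_1, \ldots, x_n$ have been chosen so that $x_\alpha \in A^{\star}$ for every nonempty $\alpha \subseteq \{1, \ldots, n\}$, the set
\[
B_n := A^{\star} \cap \bigcap_{\emptyset \neq \alpha \subseteq \{1,\ldots,n\}} \bigl(-x_\alpha + A^{\star}\bigr)
\]
is a finite intersection of members of $p$, hence $B_n \in p$. Since every idempotent in $\beta S$ is nonprincipal, $B_n$ is infinite, so I may choose $x_{n+1} \in B_n \setminus \{x_1, \ldots, x_n\}$. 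An inductive verification then shows that every finite sum $x_\beta$ with $\beta \in \mathcal{P}_{f}(\mathbb{N})$ lies in $A^{\star} \subseteq A$, producing the required monochromatic IP set $FS(\langle x_n \rangle_n) \subseteq C_i$.

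The main technical obstacle is the lemma about $A^{\star}$: one must carefully check that the extended operation on $\beta S$ is well-defined and associative, and then unwind the ultrafilter definition of $p + p = p$ to obtain both $A^{\star} \in p$ and the translation-invariance clause. The existence of an idempotent (Ellis's lemma) and the nonprincipality of any idempotent ultrafilter are general facts about compact right-topological semigroups and can be cited as black boxes, as can the standard construction of the semigroup structure on $\beta S$.
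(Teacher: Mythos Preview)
The paper does not actually prove Hindman's theorem; it is quoted from \cite{key-6}, and a few lines later the paper simply remarks that ``every member of the idempotents of $(\beta S,\cdot)$ contains an $IP$ set, which means every idempotent witnesses Hindman's theorem.'' Your proposal is precisely the standard Galvin--Glazer argument that this remark alludes to, so your approach coincides with what the paper has in mind.

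One small caveat: your claim that ``every idempotent in $\beta S$ is nonprincipal'' is false for a general commutative semigroup---if $e\in S$ satisfies $e+e=e$, the principal ultrafilter at $e$ is idempotent in $\beta S$. To guarantee the \emph{injective} sequence required by the paper's definition of an IP set you must choose a nonprincipal idempotent $p\in\beta S\setminus S$; this is automatic in $(\mathbb{N},+)$ since $\mathbb{N}$ has no additive idempotents, but in the generality of the paper's statement it tacitly requires $S$ to be infinite (and one should note that $\beta S\setminus S$ is then a closed subsemigroup, so Ellis applies there).
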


For a comprehensive discussion on the classical approach to Ramsey theory, we refer the reader to \cite{gxy}. 
In this article, we adopt an approach based on \textbf{ultrafilter theory}.  

Before proceeding, we recall some fundamental results from the \textbf{Stone–Čech compactification} of discrete semigroups, 
a framework often referred to as \textbf{topological algebra} or the \textbf{theory of ultrafilters}.

\subsection{A Brief Review of the Algebra of the Stone-\v{C}ech Compactifications of discrete semigroups}

 Ultrafilters are set-theoretic objects that are intimately related to Ramsey's theory. In this section, we give a brief introduction to this theory.  For details, we refer to the book of N. Hindman and D. Strauss \cite{key-11} to the readers. 
A filter $\mathcal{F}$ over any nonempty set $X$ is a collection of subsets of $X$ such that

\begin{enumerate}
    \item $\emptyset \notin \mathcal{F}$, and $X\in \mathcal{F}$,
    \item $A\in \mathcal{F}$, and $A\subseteq B$ implies $B\in \mathcal{F},$
    \item $A,B\in \mathcal{F}$ implies $A\cap B\in \mathcal{F}.$
\end{enumerate}
Using Zorn's lemma we can guarantee the existence of maximal filters which are called ultrafilters. Any ultrafilter $p$ has the following property:
\begin{itemize}
    \item if $X=\bigcup_{i=1}^rA_i$ is any finite partition of $X$, then there exists $i\in \{1,2,\ldots ,r\}$ such that $A_i\in p.$
\end{itemize}

Let  $(S,\cdot)$ be any discrete semigroup. Let $\beta S$ be the collection of all ultrafilters. For every $A\subseteq S,$ define $\overline{A}=\{p\in \beta S: A\in p\}.$ Now one can check that the collection $\{\overline{A}: A\subseteq S\}$ forms a basis for a topology. This basis generates a topology over $\beta S.$  We can extend the operation $``\cdot "$ of $S$ over $\beta S$  as: for any $p,q\in \beta S,$ $A\in p\cdot q$ if and only if $\{x:x^{-1}A\in q\}\in p.$ With this operation $``\cdot "$, $(\beta S,\cdot)$ becomes a compact Hausdorff right topological semigroup. One can show that $\beta S$ is nothing but the Stone-\v{C}ech compactification of $S.$ Hence Ellis's theorem guarantees that there exist idempotents in $(\beta S,\cdot)$. The set of all idempotents in $(\beta S,\cdot)$ is denoted by $E\left((\beta S,\cdot)\right).$
It can be shown that every member of the idempotents of $(\beta S,\cdot)$ contains an $IP$ set, which means every idempotent witnesses  Hindman's theorem. Using Zorn's lemma one can show that $(\beta S,\cdot)$ contains minimal left ideals (minimal w.r.t. the inclusion). A well-known fact is that the union of such minimal left ideals is a minimal two-sided ideal, denoted by $K(\beta S,\cdot).$ Here we recall a few well-known classes of sets that are relevant to our work. 


\begin{defn}\label{rev2defn} Let $(S,\cdot)$ be a semigroup, let $n\in\N$ and let $A\subseteq S$. We say that
\begin{itemize} 
\item $A$ is a {\it thick set} if for any finite subset $F\subset S$, there exists an element $x\in S$ such that $Fx=\{fx:f\in F\}\subset A$;
\item $A$ is a {\it syndetic set} if there exists a finite set $F\subset S$ such that $S=\bigcup_{x\in F}x^{-1}A$, where $x^{-1}A=\{y:xy\in A\}$;
\item $A$ is {\it piecewise syndetic set} if there exists a finite set $F\subset S$ such that $\bigcup_{x\in F}x^{-1}A$ is a thick set. It is well known that $A$  is piecewise syndetic if and only if there exists $p\in K(\beta S,\cdot)$ such that $A\in p.$

\item $A$ is {\it central set} if it belongs to a minimal idempotent in $\beta S$.
\end{itemize} \end{defn}
It can be proved that a set $A$ is thick if and only if there exists a left ideal $L$ such that $L\subseteq \overline{A}$. And a set $A$ is syndetic if and only if for every left ideal $L$, $L\cap \overline{A}\neq \emptyset.$
Note that if $f:(S,\cdot )\rightarrow (T,\cdot)$ be any map between discrete semigroups. Then $\tilde{f}:\beta S\rightarrow \beta T$ be the continuous extension of $f$ defined by $\tilde{f}(p)=\tilde{f}(\lim_{x\rightarrow p} x).$ 

\subsubsection{Tensor Product of Ultrafilters}
The tensor product of ultrafilters was introduced in \cite{an}.
 In this section, we recall materials from \cite{tensor}.  Let $(S,\cdot )$ and $(T,\cdot )$ be two discrete semigroups.

\begin{defn}
    Let $p\in (\beta S,\cdot )$ and  $q\in (\beta T,\cdot )$ be two ultrafilters. Then the tensor product of $p$ and $q$ is defined as
$$p\otimes q=\{A\subseteq S\times T: \{x\in S: \{y:(x,y)\in A\}\in q\}\in p\}$$
where $x\in S$, $y\in T.$
\end{defn}
Clearly $p\otimes q$ is a member of $\beta (S\times T).$
Tensor products can be characterized in terms of limits as follows.
$$p\otimes q=\lim_{s\rightarrow p}\lim_{t\rightarrow q}(s,t).$$
Using the induction argument, one can extend the above definition of tensor products for multiple ultrafilters. 

\begin{defn}\cite[Definition 1.15]{bhw}
    Let $k\in \N$ and for $i\in \{1,2,\ldots ,k\}$, let $S_i$ be a semigroup and let $p_i\in \beta S_i.$ We define $\bigotimes_{i=1}^kp_i\in \beta \left( \bigtimes_{i=1}^kS_i\right)$ as follows
    \begin{enumerate}
        \item $\bigotimes_{i=1}^1 p_i=p_1,$
        \item given $k\in \N$, and $A\subseteq \bigtimes_{i=1}^{k+1}S_i$, $A\in \bigotimes_{i=1}^{k+1}p_i$ if and only if $$\left\lbrace (x_1\ldots ,x_k)\in \bigtimes_{i=1}^k S_i:\left\lbrace x_{k+1}\in S_{k+1}:(x_1\ldots ,x_{k+1})\in A\right\rbrace\in p_{k+1} \right\rbrace \in \bigotimes_{i=1}^{k}p_i.$$
    \end{enumerate}
\end{defn}
Similarly, we can readdress the above definition in the language of limit:
$$\bigotimes_{i=1}^{k}p_i=\lim_{s_1\rightarrow p_1}\cdots \lim_{s_k\rightarrow p}(s_1,\ldots ,s_k).$$

In \cite{bhw}, V. Bergelson, N. Hindman and K. Williams used the tensor product tricks  to prove multi-dimensional Ramsey theoretic results including Milliken-Taylor theorem, and it's polynomial extensions. For details on multi-dimensional Ramsey theoretic results we refer \cite{ bhw, key-11,lupini}.  In \cite{b}, Beigelb\"{o}ck proved the simultaneous extension of the Central Sets Theorem and Milliken-Taylor theorem using a little different approach described in \cite[Chapter 18]{bhw}. One of our main motivations is to study the tensor product of minimal idempotent ultrafilters, which was not extensively covered earlier. As a consequence, we are able to prove the polynomial extensions of several multidimensional results in Ramsey theory. Before we proceed, we need the following technical Lemma from \cite{bhw} (a variation can be found in \cite{w}), which is essential for our purpose.

Let $X$ be any nonempty set and let $\mathcal{P}(X)$ be the power set of $X.$ 
\begin{lem}[Lifting lemma]\label{Lifting lemma} \cite[Lemma 2.5]{bhw}
    Let $(S,\cdot)$ be a semigroup, let $m\in \N$, and let $p_1,\ldots ,p_m\in \beta S.$ Let $A\in \bigotimes_{j=1}^m p_j.$ Then for $j\in \{1,2,\ldots ,m\}$ there exists $D_j:S^{j-1}\rightarrow \mathcal{P}(S)$ such that 
    \begin{enumerate}
        \item for $j\in \{1,2,\ldots ,m\},$  if for each $s\in \{1,2,\ldots ,j-1\},$  $w_s\in D_s(w_1,\ldots ,w_{s-1}),$  then $D_j(w_1,\ldots ,w_{j-1})\in p_j;$

        \item if for each $s\in \{1,2,\ldots ,m\},$  $w_s\in D_s(w_1,\ldots ,w_{s-1}),$ then $(w_1,\ldots ,w_m)\in A.$
    \end{enumerate}
\end{lem}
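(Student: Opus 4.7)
The plan is to proceed by induction on $m$. The base case $m=1$ is immediate: take $D_1:S^0\to\mathcal{P}(S)$ to send the unique empty tuple to $A$ itself. Since $\bigotimes_{j=1}^{1}p_j=p_1$, we have $A\in p_1$, confirming (1); condition (2) reduces to $w_1\in A\Rightarrow (w_1)\in A$, which is trivial.

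For the inductive step, assume the lemma for $m$ and let $A\in\bigotimes_{j=1}^{m+1}p_j$. Unwinding the recursive definition of the tensor product, the ``truncation'' set
\[
B=\bigl\{(x_1,\ldots,x_m)\in S^m:\{x_{m+1}\in S:(x_1,\ldots,x_{m+1})\in A\}\in p_{m+1}\bigr\}
\]
lies in $\bigotimes_{j=1}^{m}p_j$. Applying the induction hypothesis to $B$ yields functions $D_1,\ldots,D_m$ satisfying conditions (1) and (2) with $B$ in place of $A$. These functions will be reused verbatim.

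It remains to construct $D_{m+1}$. Define
\[
D_{m+1}(w_1,\ldots,w_m)=\begin{cases}\{x\in S:(w_1,\ldots,w_m,x)\in A\} & \text{if }(w_1,\ldots,w_m)\in B,\\ S & \text{otherwise.}\end{cases}
\]
The second clause is a harmless ``default'' ensuring $D_{m+1}$ is defined on \emph{all} of $S^m$. To check (1) for $j=m+1$, suppose $w_s\in D_s(w_1,\ldots,w_{s-1})$ for every $s\le m$; the induction hypothesis's clause (2) applied to $B$ then forces $(w_1,\ldots,w_m)\in B$, and by the defining property of $B$ we conclude $D_{m+1}(w_1,\ldots,w_m)\in p_{m+1}$. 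Clause (1) for indices $j\le m$ is inherited unchanged from the induction hypothesis, and clause (2) at the top level is automatic: once $w_{m+1}\in D_{m+1}(w_1,\ldots,w_m)$, the first case of the definition directly yields $(w_1,\ldots,w_{m+1})\in A$.

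The proof is essentially careful bookkeeping rather than a deep argument; the only subtle point is that $D_{m+1}$ must be defined on every $m$-tuple in $S^m$ (not merely on those reachable via $D_1,\ldots,D_m$), which is why a default value is supplied for tuples lying outside $B$.
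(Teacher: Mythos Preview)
Your proof is correct and is exactly the standard induction argument one expects for this lemma. Note that the paper does not supply its own proof of this statement: it is quoted verbatim from \cite[Lemma 2.5]{bhw} and used as a black box, so there is nothing in the paper to compare against beyond observing that your argument matches the usual one.
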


\subsection{The Central Sets Theorem and its Polynomial Extensions}

Before we proceed to our main results, we need to recall the Central Sets Theorem. After the foundation of both van der Waerden's and Hindman's theorem, an immediate question appeared if one can find a joint extension of both of these theorems. In \cite{f2}, using the methods of Topological dynamics, Furstenberg defined the notions of Central Sets and proved that if $\N$ is finitely colored, then one of the color classes is Central. Then he proved a joint extension of both the van der Waerden's and Hindman's theorem, known as the Central Sets Theorem.

\begin{thm} [Central Sets Theorem]\label{cst} \cite[Proposition 8.21]{f2}
Let $l\in\mathbb{N}$, and $A\subseteq\mathbb{N}$ be a central set. For
each $i\in\{1,2,\ldots,l\}$ let $\langle x_{i,m}\rangle_{m=1}^{\infty}$ be a sequence in $\mathbb{\mathbb{Z}}$. Then there exist  sequences
$\langle b_{m}\rangle_{m=1}^{\infty}$ in $\mathbb{N}$ and $\langle K_{m}\rangle_{m=1}^{\infty}$
in $\F$ such that
\begin{enumerate}
\item For each $m$, $\max K_{m}<\min K_{m+1}$ and
\item For each $i\in\{1,2,\ldots,l\}$ and $H\in\F$,
$\sum_{m\in H}(b_{m}+\sum_{t\in K_{m}}x_{i,t})\in A$.
\end{enumerate}
    
\end{thm}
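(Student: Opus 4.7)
The plan is to prove Theorem~\ref{cst} via an ultrafilter-theoretic induction in $\beta\mathbb{N}$. Since $A$ is central, fix a minimal idempotent $p\in(\beta\mathbb{N},+)$ with $A\in p$ and set
$$A^{\star}=\{x\in A:-x+A\in p\}.$$
Standard facts on minimal idempotents give $A^{\star}\in p$ together with the closure property that $-x+A^{\star}\in p$ for every $x\in A^{\star}$ \cite{key-11}. Since $A^{\star}\subseteq A$, it suffices to build $\langle b_m\rangle$ in $\mathbb{N}$ and $\langle K_m\rangle$ in $\mathcal{P}_f(\mathbb{N})$ with $\max K_m<\min K_{m+1}$ such that every partial sum
$$y_{H,i}\;=\;\sum_{j\in H}\!\Bigl(b_j+\sum_{t\in K_j}x_{i,t}\Bigr),\qquad \emptyset\neq H\in\mathcal{P}_f(\mathbb{N}),\;i\leq l,$$
lies in $A^{\star}$.

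I would construct $(b_m,K_m)$ by recursion on $m$, maintaining at stage $m$ the invariant that $y_{H,i}\in A^{\star}$ for every $\emptyset\neq H\subseteq\{1,\ldots,m\}$ and every $i\leq l$. After the first $m-1$ stages, set $N_m=\max K_{m-1}$ (with $N_1=0$) and form
$$B_m\;=\;A^{\star}\cap\bigcap_{\emptyset\neq H\subseteq\{1,\ldots,m-1\},\,i\leq l}\bigl(-y_{H,i}+A^{\star}\bigr).$$
By the inductive hypothesis each $y_{H,i}\in A^{\star}$, so every translate above is in $p$ by the star property, and therefore $B_m\in p$ as a finite intersection. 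The induction thus reduces to producing $b_m\in\mathbb{N}$ and $K_m\in\mathcal{P}_f(\mathbb{N})$ with $\min K_m>N_m$ so that $b_m+\sum_{t\in K_m}x_{i,t}\in B_m$ for every $i\leq l$.

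I expect this final simultaneous-choice step to be the main obstacle, because one value $b_m$ must compensate for $l$ \emph{a priori} unrelated shifts $\sum_{t\in K_m}x_{i,t}$ inside $B_m$. My approach is to work in the partial semigroup $T=\mathbb{N}\times\mathcal{P}_f(\mathbb{N})$ equipped with the partial operation $(b,K)\oplus(b',K')=(b+b',\,K\cup K')$ defined when $\max K<\min K'$, and to note that each map $\phi_i:T\to\mathbb{N}$, $(b,K)\mapsto b+\sum_{t\in K}x_{i,t}$, is a partial homomorphism. The plan is to produce, inside the partial-semigroup compactification of $T$, an ultrafilter $r$ with $\phi_i^{-1}(B_m)\in r$ for every $i\leq l$; any element of $\bigcap_{i\leq l}\phi_i^{-1}(B_m)\in r$ then yields the desired pair $(b_m,K_m)$. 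Establishing the existence of such an $r$ is the delicate technical heart of the argument and would be handled either by a direct construction exploiting the minimality of $p$ and the algebra of $K(\beta\mathbb{N})$, or by a tensor-power application of the Lifting Lemma~\ref{Lifting lemma}. Once this step is secured, the induction closes routinely and produces the sequences required by the theorem.
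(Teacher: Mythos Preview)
The paper does not prove Theorem~\ref{cst}; it is quoted as background from Furstenberg~\cite{f2}, so there is no in-paper argument to compare your proposal against.

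On its own merits, your $A^\star$-recursion skeleton is exactly the standard ultrafilter organization of the proof and is correct as far as it goes. You are also right that the entire content of the theorem is concentrated in the simultaneous-choice step: given $B_m\in p$ with $p$ a minimal idempotent, produce $b_m\in\mathbb{N}$ and $K_m$ with $\min K_m>N_m$ so that $b_m+\sum_{t\in K_m}x_{i,t}\in B_m$ for every $i\leq l$. This is precisely the assertion that every member of a minimal idempotent is a $J$-set, and it is the only nontrivial ingredient.

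The gap is in how you propose to discharge that step. The Lifting Lemma~\ref{Lifting lemma} is the wrong tool: it takes a set $A\in\bigotimes_{j}p_j\subseteq\beta(\mathbb{N}^m)$ and decomposes it into coordinate slices, but here you are working with a single set $B_m\subseteq\mathbb{N}$ and there is no tensor structure to lift from. Your partial-semigroup formulation is closer in spirit, but the sentence ``produce an ultrafilter $r$ with $\phi_i^{-1}(B_m)\in r$ for every $i$'' is exactly the statement to be proved, restated; nothing in your outline explains why the finite intersection $\bigcap_{i\le l}\phi_i^{-1}(B_m)$ is nonempty, let alone why it meets the tail $\{(b,K):\min K>N_m\}$. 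This is where the minimality of $p$ must do genuine work, and the known arguments go through either the Hales--Jewett theorem (showing directly that piecewise syndetic sets are $J$-sets) or the algebraic route of proving that $\{q\in\beta\mathbb{N}:\text{every member of }q\text{ is a }J\text{-set}\}$ is a closed two-sided ideal of $(\beta\mathbb{N},+)$ and hence contains $K(\beta\mathbb{N},+)$. Until one of these is supplied, your write-up is a correct reduction to the $J$-set property rather than a proof of the theorem.
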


Before we state the stronger version of Theorem \ref{cst}, we need the following definitions from \cite{dhs}.

\begin{defn} ($J$-set)
    \begin{enumerate}
        \item  $A$ is called a
$J$-set if and only if  for every $H\in \G(^\N\N)$, there exists $a\in \N$ and $\beta\in \F$ such that for all $f\in H,$ $a+\sum_{t\in \beta}f(t)\in A.$

        \item $\mathcal{J}=\{p\in \beta \N :(\forall A\in p)\, A \text{ is a } J \text{ set}\}.$
               
    \end{enumerate}
\end{defn}

It can be proved that $J$ sets are partition regular, and so $\mathcal{J}\neq \emptyset$. Again it is easy to verify that $(\mathcal{J},+)$ is a two sided ideal of $(\beta \N,+)$, hence contains $K(\beta \N,+).$
\begin{itemize}
    \item A set will be called $C$-set if it is a member of any idempotent in $(\mathcal{J},+)$.
    \end{itemize}
As every piecewise syndetic set is a $J$ set, one can show that every Central set is a $C$ set. The following stronger version of the Central Sets Theorem was due to De, Hindman, and Strauss \cite{dhs}.

\begin{thm} [Stonger Central Sets Theorem, \cite{dhs}]\label{scst} Let $\tau=^\N\mathbb{N}$, and let $A \subseteq \N$. 

\begin{enumerate}
    \item $A$ is a $C$ set.

\item there exists functions $\alpha : \mathcal{P}_f(^\N\mathbb{N})\to \mathbb{N},$ and $H: \mathcal{P}_f(^\N\mathbb{N}) \to \mathcal{P}_f\left(\mathbb{N}\right)$ such that 
\begin{enumerate}
\item \label{1.41} if $F,G \in \mathcal{P}_f(\tau)$ and $F \subsetneqq G$ then $\max H(F) < \min H(G)$, and 
\item \label{1.42} whenever $m \in \mathbb{N}$, $G_1, G_2, \ldots , G_m \in \mathcal{P}_f(\tau)$, $G_1 \subsetneq G_2 \subsetneq \cdots \subsetneq G_m$ and for each $i \in \{1,2, \ldots , m\}$, $f_i \in G_i$, one has $$\sum_{i=1}^{m}\bigl(\alpha(G_i)+\sum_{t\in H(G_i)}f_i(t)\bigr)\in A.$$
\end{enumerate}
\end{enumerate}
\end{thm}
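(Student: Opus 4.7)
The plan is to prove the two implications separately, using the idempotent structure of $\mathcal{J}$ for $(1) \Rightarrow (2)$ and a compact subsemigroup argument for the converse.

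For $(1) \Rightarrow (2)$, fix an idempotent $p \in \mathcal{J}$ with $A \in p$ and set $A^{*} = \{a \in A : -a + A \in p\}$, so $A^{*} \in p$ and $-a + A^{*} \in p$ for every $a \in A^{*}$. Construct $\alpha(F), H(F)$ by recursion on $|F|$ for $F \in \mathcal{P}_f(\tau)$. At the stage for $F$, enumerate the finitely many partial sums
\[
s = \sum_{i=1}^{k}\Bigl(\alpha(G_i) + \sum_{t \in H(G_i)} f_i(t)\Bigr)
\]
arising from chains $G_1 \subsetneq \cdots \subsetneq G_k \subsetneq F$ with $f_i \in G_i$ and $\alpha, H$ already defined. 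By the inductive hypothesis each such $s$ lies in $A^{*}$, so $-s + A^{*} \in p$; the finite intersection $B = A^{*} \cap \bigcap_{s} (-s + A^{*})$ is a member of $p$ and hence a $J$-set. Applying the $J$-set property to the finite family $F \subseteq \tau$, after pre-shifting the input functions by a sufficiently large translation so that $\min H(F) > \max H(G)$ for every $G \subsetneq F$ already treated, produces $(\alpha(F), H(F))$ with $\alpha(F) + \sum_{t \in H(F)} f(t) \in B$ for every $f \in F$. Condition (a) is arranged by construction, and (b) follows by induction on $m$, since each newly formed partial sum lies in $-s + A^{*} \subseteq A$.

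For $(2) \Rightarrow (1)$, for each $F \in \mathcal{P}_f(\tau)$ define
\[
B_F = \Bigl\{\sum_{i=1}^{m}\bigl(\alpha(G_i) + \sum_{t \in H(G_i)} f_i(t)\bigr) : m \geq 1,\ F \subsetneq G_1 \subsetneq \cdots \subsetneq G_m,\ f_i \in G_i\Bigr\}.
\]
Then $B_F \subseteq A$ by (b), and $B_{F \cup F'} \subseteq B_F \cap B_{F'}$, so $Q := \bigcap_{F} \overline{B_F}$ is a nonempty compact subset of $\beta\mathbb{N}$. A chain-concatenation argument shows $Q$ is a subsemigroup of $(\beta\mathbb{N}, +)$: if $x \in B_F$ has decomposition ending at the top set $G_m$, then $x + B_{G_m} \subseteq B_F$, hence $B_{G_m} \subseteq -x + B_F$, giving $B_F \in p + q$ for all $p, q \in Q$. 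By Ellis's theorem $Q$ contains an idempotent $p$, and $B_F \subseteq A$ forces $A \in p$.

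The main obstacle is verifying $p \in \mathcal{J}$, that is, that every $C \in p$ is a $J$-set. Given $\mathcal{H} \in \mathcal{P}_f(\tau)$, a naive choice of element of $C \cap B_\mathcal{H}$ only yields a single chain-sum, not the uniform $J$-set witness $(a, \beta)$ required. The resolution exploits the idempotency of $p$ --- iterating $C \mapsto C^{*} = \{x \in C : -x + C \in p\}$ --- together with the observation that for every $G \supsetneq \mathcal{H}$ the slice $\{\alpha(G) + \sum_{t \in H(G)} f(t) : f \in G\}$ sits inside $B_\mathcal{H}$ and already constitutes a built-in $J$-set witness for $\mathcal{H}$ inside $A$; the task becomes locating some $G \supsetneq \mathcal{H}$ for which the $\mathcal{H}$-portion of this slice lies entirely inside $C$. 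By iteratively applying the idempotent structure along a chain $\mathcal{H} \subsetneq F_1 \subsetneq F_2 \subsetneq \cdots$ and using that $C^{*} \cap B_{F_k} \in p$ at every stage, one extracts such a $G$ and thereby the desired witness $(a, \beta) = (\alpha(G), H(G))$. This combinatorial bookkeeping, ensuring the chosen $G$ satisfies the slice containment rather than just producing one sum in $C$, is the subtlest part of the argument.
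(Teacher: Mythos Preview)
The paper does not prove this theorem; it is quoted from \cite{dhs} as background. The only place the paper touches the argument is inside the proof of Theorem~\ref{todo0}, where the analogous claim $K(Q(\alpha,H))\subseteq\mathcal{J}$ is simply asserted as ``easy to verify'' without justification. So there is no detailed proof in the paper to compare against; what can be said is that your overall architecture for $(2)\Rightarrow(1)$ --- the sets $B_F$, the compact subsemigroup $Q=\bigcap_F\overline{B_F}$, and the appeal to Ellis --- is exactly the construction the paper uses.

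Your sketch of $(1)\Rightarrow(2)$ is the standard recursion and is fine. The gap is in your resolution of the obstacle in $(2)\Rightarrow(1)$. Iterating $C\mapsto C^{*}$ and walking along a chain $\mathcal{H}\subsetneq F_1\subsetneq F_2\subsetneq\cdots$ produces a sequence of \emph{single} elements of $C$; idempotency controls translates of one element at a time, but it never forces the \emph{simultaneous} containment $\{\alpha(G)+\sum_{t\in H(G)}h(t):h\in\mathcal{H}\}\subseteq C$ for a common $G$. Nothing in the iteration links the $|\mathcal{H}|$ many required elements together, so the ``extraction of such a $G$'' you promise does not actually occur.

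The argument that does work (and is what underlies the paper's one-line assertion and the proof in \cite{dhs}) avoids idempotency at this step entirely. First note that each $B_F$ is itself a $J$-set: given $\mathcal{H}$, pick any $G\supsetneq F\cup\mathcal{H}$; then $(\alpha(G),H(G))$ is a witness, since $\alpha(G)+\sum_{t\in H(G)}h(t)\in B_F$ for every $h\in\mathcal{H}\subseteq G$. Because $J$-sets are partition regular, no $B_F$ can be covered by finitely many non-$J$-sets, which gives the finite intersection property for $\{\overline{B_F}:F\in\mathcal{P}_f(\tau)\}\cup\{\beta\mathbb{N}\setminus\overline{C}:C\text{ is not a }J\text{-set}\}$ and hence $Q\cap\mathcal{J}\neq\emptyset$. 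Since $\mathcal{J}$ is a two-sided ideal of $(\beta\mathbb{N},+)$, the set $Q\cap\mathcal{J}$ is an ideal of $Q$, so $K(Q)\subseteq\mathcal{J}$; any idempotent in $K(Q)$ is then the required $p\in E(\mathcal{J})$ with $A\in p$.
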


\subsubsection{Polynomial Extension of the Central Sets Theorem}
In \cite{key-50, key-51}, Hindman and McCutcheon extensively studied the algebraic structure of VIP systems which are the generalizations of usual polynomials. They proved several results including the polynomial extension of Theorem \ref{cst}. This was the joint extension of the polynomial van der Waerden's theorem and the central sets theorem.  

\begin{thm}[Polynomial Central Sets Theorem]\cite{key-51}
 \label{PCST} 
 Let $F\in \G(^\N\N)$, let $T\in \G(\mathbb{P})$ and let $A$ be a central subset of $\N$. Then there exist sequences $\langle b_n\rangle_{n=1}^\infty$ in $\N$ and $\langle H_n\rangle_{n=1}^\infty$ in $\F$ such that
 \begin{enumerate}
     \item for each $n\in \N$, $\max H_n<\min H_{n+1}$ and
     \item for each $f\in F,$ each $P\in T$ and each $K\in \F$ 
 \end{enumerate}
 $$\sum_{n\in K}b_n+P\left(\sum_{n\in K} \sum_{t\in H_n} f(t)\right)\in A.$$

\end{thm}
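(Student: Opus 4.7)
The plan is to leverage the ultrafilter-theoretic characterization of central sets together with a polynomial strengthening of the $J$-set property. Since $A$ is central, fix a minimal idempotent $p\in(\beta\N,+)$ with $A\in p$. Recall that $A^{\star}:=\{x\in A:-x+A\in p\}\in p$ and that for each $x\in A^{\star}$ one has $-x+A^{\star}\in p$; this is the engine that drives the inductive construction of the two sequences $\langle b_n\rangle_{n=1}^\infty$ and $\langle H_n\rangle_{n=1}^\infty$.

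Before the induction, I would establish the key technical lemma: given any $B\in p$, any $F\in\G(^{\N}\N)$, and any $T\in\G(\mathbb{P})$, there exist $a\in\N$ and $H\in\F$ (with $\min H$ arbitrarily large) such that $a+P\bigl(\sum_{t\in H}f(t)\bigr)\in B$ for every $f\in F$ and every $P\in T$. This is the polynomial analogue of the statement that every member of a minimal idempotent is a $J$-set. It can be proved by combining the piecewise syndeticity of $B$ with the Polynomial van der Waerden Theorem (Theorem \ref{pvdw}) applied inside a suitable thick translate of $B$; alternatively one may invoke the tensor-product machinery recalled in the introduction (via Lemma \ref{Lifting lemma}) by choosing a minimal idempotent of the form $\bigotimes_i p_i$ that accommodates all polynomials in $T$ simultaneously.

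The main induction runs as follows. Having selected $b_1,H_1,\ldots,b_{n-1},H_{n-1}$ satisfying the conclusion for every $K\subseteq\{1,\ldots,n-1\}$, let $\Sigma_n$ be the finite set of all partial sums of the form $\sum_{i\in K}b_i+P\bigl(\sum_{i\in K}\sum_{t\in H_i}f(t)\bigr)$ with $K\subseteq\{1,\ldots,n-1\}$, $f\in F$, $P\in T$. By the inductive hypothesis each $\sigma\in\Sigma_n$ lies in $A^{\star}$, so $B_n:=A^{\star}\cap\bigcap_{\sigma\in\Sigma_n}(-\sigma+A^{\star})\in p$. Choose $b_n\in B_n$ larger than $\max H_{n-1}$, and then apply the key lemma to the finite collection of $p$-large translates $\{-(\sigma+b_n)+A^{\star}:\sigma\in\Sigma_n\cup\{0\}\}$ (intersected appropriately) to produce a single $H_n$ with $\min H_n>b_n$ that simultaneously witnesses the polynomial $J$-property for all these translates, all $f\in F$, and all $P\in T$.

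The main obstacle is the joint simultaneity of the polynomial step: the inner sum $\sum_{i\in K}\sum_{t\in H_i}f(t)$ accumulates across stages and is fed into $P$, so when one expands $P(y_1+\cdots+y_m)$ with $y_j=\sum_{t\in H_j}f(t)$ one produces cross-terms that must be absorbed into the additive $b$-part. This is precisely where the IP-polynomial framework (the VIP systems of Hindman--McCutcheon) enters and forms the heart of the proof; the remaining ultrafilter bookkeeping is a direct polynomial strengthening of the standard proof of the Central Sets Theorem.
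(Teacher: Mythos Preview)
The paper does not prove Theorem~\ref{PCST}; it is quoted from \cite{key-51} as background. So there is no ``paper's proof'' to compare against directly, but the paper does prove the stronger Theorem~\ref{no1} (cited from \cite{ejc}) and Theorem~\ref{todo2}, and the mechanism there exposes a genuine gap in your induction step.

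Your paragraph~3 separates the choice of $b_n$ from the choice of $H_n$: first pick $b_n\in B_n$, then apply the key lemma to the translates $-(\sigma+b_n)+A^{\star}$ to manufacture $H_n$. This is the linear Central Sets Theorem template, and it breaks for polynomials. Writing $K=K'\cup\{n\}$, $d=\sum_{i\in K'}\sum_{t\in H_i}f(t)$, $y=\sum_{t\in H_n}f(t)$, and $\sigma=\sum_{i\in K'}b_i+P(d)$, what you need is $\sigma+b_n+\bigl(P(d+y)-P(d)\bigr)\in A^{\star}$, whereas your lemma applied to $-(\sigma+b_n)+A^{\star}$ with the \emph{original} family $T$ only gives $\sigma+b_n+a+P(y)\in A^{\star}$. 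Since $P(d)+P(y)\neq P(d+y)$ in general, the conclusion does not follow. You flag this as ``the main obstacle'' in paragraph~4 but then defer to VIP systems without saying what actually closes the gap.

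The fix is concrete and does not require the full VIP machinery. At stage $n$ let $R$ be the finite set of accumulated inner sums $d=\sum_{i\in K'}\sum_{t\in H_i}f(t)$ over all $K'\subseteq\{1,\ldots,n-1\}$ and $f\in F$, and enlarge $T$ to $S=T\cup\{Q_{P,d}:P\in T,\ d\in R\}$ where $Q_{P,d}(y)=P(y+d)-P(d)\in\mathbb{P}$. Now apply your key lemma \emph{once} with the polynomial family $S$ to the single $p$-large set $B_n=A^{\star}\cap\bigcap_{\sigma\in\Sigma_n}(-\sigma+A^{\star})$, obtaining $a$ and $H_n$ (with $\min H_n$ large) such that $a+Q\bigl(\sum_{t\in H_n}f(t)\bigr)\in B_n$ for all $Q\in S$ and $f\in F$; then set $b_n=a$. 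The identity $\sum_{i\in K}b_i+P\bigl(\sum_{i\in K}\sum_{t\in H_i}f(t)\bigr)=\sigma+a+Q_{P,d}(y)$ now lands in $A^{\star}$ as required. This is exactly the device used in the paper's proof of Theorem~\ref{todo2} (and in \cite{ejc} for Theorem~\ref{no1}): the pair $(b_n,H_n)$ must be produced \emph{jointly} by the $J_p$-lemma against the augmented polynomial family, not in two separate steps.
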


Later in \cite{ejc}, the authors found the polynomial extension of Theorem \ref{scst}, which is the stronger version of Theorem \ref{PCST}. To state this result we need the following definition which polynomialize the notion of $J$ sets.

\begin{defn}  ($J_{p}$-set:)
    \begin{enumerate}
        \item $A$ is called a
$J_{p}$-set if and only if  for every $F\in\G(\mathbb{P})$, and every $H\in \G(^\N\N)$, there exists $a\in \N$ and $\beta\in \F$ such that for all $P\in F$ and all $f\in H,$ $a+P\left(\sum_{t\in \beta}f(t)\right)\in A$

  \item $\mathcal{J}_p=\{p\in \beta \N :(\forall A\in p)\, A \text{ is a } J_p \text{ set}\}.$
    \end{enumerate}
\end{defn}

We don't know whether $J_p$ sets are P.R., but it can be shown that every piecewise syndetic set is a $J_p$ set. Hence $\mathcal{J}_p\neq \emptyset$. Now one can show that $(\mathcal{J}_p,+)$ is a two sided ideal of $(\beta \N,+)$. We call a set a $C_p$ set if it belongs to some idempotents of $(\mathcal{J}_p,+)$.

\begin{thm}  [Stronger polynomial Central Sets Theorem]\label{no1}\cite[Theorem 11]{ejc} 
Let $A$ be a $C_p$-set and let $T\in \mathcal{P}_f(\mathbb{P})$. There exist functions
$\alpha : \mathcal{P}_f(^{\mathbb{N}}\mathbb{N})\to \mathbb{N}$ and $H: \mathcal{P}_f(^{\mathbb{N}}\mathbb{N}) \to \mathcal{P}_f(\mathbb{N})$ such that
\begin{enumerate}
\item  if $G,K \in \mathcal{P}_f(^{\mathbb{N}}\mathbb{N})$ and $G \subsetneqq K$ then $\max H(G) < \min H(K)$ and
\item  if $n\in \mathbb{N}$, $G_1, G_2, \cdots, G_n \in \mathcal{P}_f(^{\mathbb{N}}\mathbb{N})$,
$G_1 \subsetneq G_2 \subsetneq \cdots \subsetneq G_n$ and for all  $i \in \{1, 2, \ldots, n\}$,
$f_i \in G_i$, then for all $P\in T$,
$$\sum_{i=1}^{n}\alpha(G_i)+P\left(\sum_{i=1}^{n}\sum_{t\in H(G_i)}f_i(t)\right)\in A.$$
\end{enumerate}
\end{thm}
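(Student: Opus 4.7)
The plan is to upgrade the proof of Theorem~\ref{scst} to the polynomial setting by replacing the $J$-set property with the $J_p$-set property and by absorbing the nonlinearity of each $P \in T$ through the algebraic shift identity $P(x+y) = P(x) + Q_{P,x}(y)$, where $Q_{P,x}(y) := P(x+y) - P(x)$ again belongs to $\mathbb{P}$ because $P(0)=0$. Since $A$ is a $C_p$-set, I would fix an idempotent $p \in (\mathcal{J}_p,+)$ with $A \in p$ and set $A^{\star} := \{x \in A : -x + A \in p\}$; the standard idempotent property yields $A^{\star} \in p$ together with $-x + A^{\star} \in p$ for every $x \in A^{\star}$, so that every such translate is itself a $J_p$-set.

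Next I would build $\alpha(F)$ and $H(F)$ by well-founded recursion on $(\mathcal{P}_f({}^{\N}\N), \subsetneq)$, maintaining the strengthened invariant that for every chain $G_1 \subsetneq \cdots \subsetneq G_m = F$, every choice $f_i \in G_i$, and every $P \in T$, the sum $\sum_{i=1}^{m}\alpha(G_i) + P(\sum_{i=1}^{m}\sum_{t \in H(G_i)}f_i(t))$ lies in $A^{\star}$. At stage $F$, collect the finite set $\mathcal{U}_F$ of ``prior sums'' $u = \sum_{i=1}^{m-1}\alpha(G_i) + P(\sum_{i=1}^{m-1}\sum_{t \in H(G_i)}f_i(t))$ coming from chains strictly below $F$ (with $u=0$ for the empty chain), together with the corresponding finite polynomial family $\mathcal{Q}_F = \{Q_{P,x} : P \in T,\ x \text{ a prior inner sum}\} \subset \mathbb{P}$ (which automatically contains $T$ via $x=0$). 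By the inductive invariant $\mathcal{U}_F \subseteq A^{\star}$, so $B_F := A^{\star} \cap \bigcap_{u \in \mathcal{U}_F}(-u + A^{\star}) \in p$ and is therefore a $J_p$-set.

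Then I would apply the $J_p$-set property of $B_F$ with polynomial family $\mathcal{Q}_F$ and shifted function family $\{\tilde f : f \in F\}$, where $\tilde f(k) := f(k+M)$ and $M > \max\{\max H(F') : \emptyset \neq F' \subsetneq F\}$; this produces $a \in \N$ and $\beta' \in \F$ with $a + Q(\sum_{t \in \beta'} \tilde f(t)) \in B_F$ uniformly in $Q \in \mathcal{Q}_F$ and $f \in F$. Setting $\alpha(F) := a$ and $H(F) := \{t+M : t \in \beta'\}$ enforces $\min H(F) > M$, and the shift identity
$$u + \alpha(F) + Q_{P,x}\Bigl(\sum_{t \in H(F)} f(t)\Bigr) = \sum_{i=1}^{m}\alpha(G_i) + P\Bigl(\sum_{i=1}^{m}\sum_{t \in H(G_i)}f_i(t)\Bigr)$$
places the extended-chain sum in $u + B_F \subseteq A^{\star}$, closing the induction. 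The main technical hurdle is the bookkeeping: at each stage I must check that $\mathcal{U}_F$ and $\mathcal{Q}_F$ are genuinely finite (they are, since $T$ and the collection of chains below $F$ are both finite) and that every $Q_{P,x}$ retains zero constant term, so that a single application of the $J_p$-hypothesis handles all instances uniformly — just as in the non-polynomial proof of Theorem~\ref{scst}.
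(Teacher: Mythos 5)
Your proposal is correct and follows essentially the same route as the proof this paper relies on: Theorem \ref{no1} is quoted from \cite[Theorem 11]{ejc}, and your induction on the subsets of $F$ with the $A^{\star}$-sets, the shifted polynomials $Q_{P,x}(y)=P(x+y)-P(x)$, the enlarged finite family $S=T\cup\{Q_{P,d}\}$, and a single uniform application of the $J_p$-property is exactly the scheme reproduced in the paper's own proof of the multidimensional analogue, Theorem \ref{todo2}. The only cosmetic difference is that you enforce $\min H(F)>M$ by pre-shifting the functions $\tilde f(k)=f(k+M)$, whereas the paper invokes \cite[Lemma 10]{ejc}, which delivers $\min\beta>M$ directly.
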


However likewise in Theorem \ref{scst}, the converse direction of Theorem \ref{no1} is not known.
The proof of Theorem \ref{no1} uses induction argument. By observing that any $\beta\in\F$ is a subset of $\{1,\dots,n\}$ for some large enough $n\in\mathbb{N}$, by going along the lines of the proof of Theorem \ref{no1}, one can deduce the following seemingly stronger version of Theorem~\ref{no1}.

\begin{cor}\label{yad}\cite[Corollary 12]{ejc}
Let $A$ be a $C_{p}$-set and $T\in \G(\mathbb{P})$. Then there exist $\alpha:\G(^\N\N)\rightarrow \mathbb{N}, H:\G(^\N\N)\rightarrow \G(\mathbb{N})$ such that

\begin{enumerate}
\item if $F,G\in \G(^\N\N),F\subset G$, then $\max H(F)<\min H(G)$;

\item if $\langle G_{n} \rangle_{n\in \mathbb{N}}$ is a sequence in $\G(^\N\N)$ such that $G_{1}\subsetneq G_{2}\subsetneq\cdots\subsetneq G_{n} \subsetneq \cdots$ and
$f_{i}\in G_{i},i\in \mathbb{N}$, then for all $P\in T,$

\[
\sum_{i\in \beta } \alpha(G_{i})+P\left(\sum_{i\in \beta}\sum_{t\in H(G_{i})}f_{i}(t)\right)\in A.
\]
for all $\beta \in \F$.
\end{enumerate}
\end{cor}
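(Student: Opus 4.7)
The plan is to extract Corollary \ref{yad} directly from Theorem \ref{no1} via a subsequence argument. The key observation is that the functions $\alpha$ and $H$ asserted by Theorem \ref{no1} depend only on the $C_p$-set $A$ and the finite polynomial collection $T$: a single pair $(\alpha, H)$ handles every positive integer $n$ and every strictly increasing chain of length $n$ simultaneously, since the quantifier ``$n\in\mathbb{N}$'' appears inside the conclusion of the theorem and not outside the choice of $(\alpha, H)$.

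Granting this universality, my first step would be to invoke Theorem \ref{no1} with the given $A$ and $T$ to obtain $\alpha : \mathcal{P}_f(^\mathbb{N}\mathbb{N}) \to \mathbb{N}$ and $H : \mathcal{P}_f(^\mathbb{N}\mathbb{N}) \to \mathcal{P}_f(\mathbb{N})$ satisfying clauses (1) and (2) of that theorem. Condition (1) of the corollary is then identical to condition (1) of the theorem, so nothing further is needed there.

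For condition (2), fix a strictly increasing sequence $\langle G_n \rangle_{n\in\mathbb{N}}$ in $\mathcal{P}_f(^\mathbb{N}\mathbb{N})$, selections $f_i \in G_i$, a polynomial $P \in T$, and an arbitrary $\beta \in \mathcal{P}_f(\mathbb{N})$. Enumerate $\beta = \{i_1 < i_2 < \cdots < i_k\}$. The finite chain $G_{i_1} \subsetneq G_{i_2} \subsetneq \cdots \subsetneq G_{i_k}$ is again a strictly increasing chain in $\mathcal{P}_f(^\mathbb{N}\mathbb{N})$ with $f_{i_j} \in G_{i_j}$, so applying clause (2) of Theorem \ref{no1} to this chain with $n=k$ yields
$$\sum_{j=1}^{k} \alpha(G_{i_j}) + P\left(\sum_{j=1}^{k} \sum_{t \in H(G_{i_j})} f_{i_j}(t)\right) \in A,$$
which is exactly the required statement $\sum_{i\in\beta}\alpha(G_i) + P\bigl(\sum_{i\in\beta}\sum_{t\in H(G_i)} f_i(t)\bigr)\in A$.

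The only step that could require genuine work is checking that the proof of Theorem \ref{no1} in \cite{ejc} really does produce $(\alpha, H)$ uniformly in $n$, rather than hiding an $n$-dependent choice. This is the main (and essentially the only) obstacle; the hint in the excerpt, ``going along the lines of the proof of Theorem \ref{no1}'', suggests that if one inspects the idempotent-based recursive construction of $\alpha$ and $H$ used to prove Theorem \ref{no1}, the same recursion trivially produces functions that satisfy the conclusion for every $n$ at once. Once that uniformity is in hand, the subsequence step above supplies the corollary with no additional combinatorics.
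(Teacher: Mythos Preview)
Your argument is correct and matches the paper's approach: the paper's one-line justification is precisely the observation that any $\beta\in\mathcal{P}_f(\mathbb{N})$ indexes a finite subchain, to which Theorem~\ref{no1} applies directly. Your residual worry about uniformity in $n$ is unnecessary---the quantifier ``if $n\in\mathbb{N}$'' already sits inside clause~(2) of the \emph{statement} of Theorem~\ref{no1}, after $\alpha$ and $H$ have been fixed, so no inspection of the proof in \cite{ejc} is needed.
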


\subsection{Milliken-Taylor Theorem and its Twofold Extensions}

In $1975$-$76$, Milliken and Taylor independently proved the joint extension of both Hindman theorem and Ramsey's theorem. To state the Milliken-Taylor Theorem, we need to introduce some notation. Given $F,G\in \G(\N)$, we write $F<G$ to mean that $\max F< \min G.$ When we say that a sequence $\langle H_n\rangle_n$ in $\G(\N)$ is increasing if for every $n\in \N$, $\max H_n<\min H_{n+1}.$ We present the different terminology because these special cases arise frequently.

\begin{defn} \cite[Definition 1.3]{bhw}
    Let $k\in \N$. 
    \begin{enumerate}
        \item  For any set $X,$ $[X]^k=\{A\subseteq X:|A|=k\}.$
        \item  For a sequence $\langle x_n\rangle_n$ in $\N$ $$[FS(\langle x_n\rangle_n)]_{<}^k=\left\lbrace \left(\sum_{t\in F_1}x_t,\sum_{t\in F_2}x_t,\ldots ,\sum_{t\in F_k}x_t\right):F_1<F_2<\cdots <F_k\right\rbrace.$$
        \item In a semigroup $(S,\cdot ),$ $FP(\langle y_n\rangle_n)$ is a {\it product subsystem} of $FP(\langle x_n\rangle_n)$  if and only if there exists an increasing sequence  $\langle H_n\rangle_n$ in $\G(\N)$   such that for each $n\in \N,$ $y_n=\Pi_{t\in H_n}x_t$ where the products $\Pi_{t\in H_n}x_t$ are
        computed in increasing order of indices. If the semigroup is commutative, we denote it by $(S,+)$ and the product subsystem $FP(\langle y_n\rangle_n)$ is denoted by $FS(\langle y_n\rangle_n)$ referred by sum subsystem.
    \end{enumerate}
\end{defn}

One of the fundamental theorems of Ramsey theory was due to Ramsey \cite{ramsey}: 
\begin{thm}[Ramsey]
If $X$ is an infinite set, and $k\in \N$, then for every finite coloring of $[X]^k$, there exists an infinite set $Y\subseteq X$ such that the set $[Y]^k$ is monochromatic.
\end{thm}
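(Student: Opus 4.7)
Since the paper works throughout with ultrafilters and the Stone--\v{C}ech compactification, I would give the ultrafilter proof of Ramsey's theorem, which is natural in this framework. The plan is to induct on $k\in\N$ and establish the slightly sharper statement: for any non-principal ultrafilter $p\in\beta X\setminus X$ and any finite coloring $\chi:[X]^k\to\{1,\dots,r\}$, there exist a color $c$ and a set $Z\in p$ with $\chi\equiv c$ throughout $[Z]^k$. Since $p$ is non-principal, $Z$ must be infinite, which yields the theorem.

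The base case $k=1$ is exactly the ultrafilter property applied to the partition $X=\bigsqcup_{i=1}^{r}\chi^{-1}(i)$: a unique block lies in $p$, and it is infinite. For the inductive step from $k$ to $k+1$, given $\chi:[X]^{k+1}\to\{1,\dots,r\}$, I would associate to each $A\in[X]^k$ the induced coloring $\chi_A(x)=\chi(A\cup\{x\})$ on $X\setminus A$ and let $\phi(A)$ be the unique color for which $B_A:=\{x\in X\setminus A:\chi_A(x)=\phi(A)\}\in p$ (using that $X\setminus A\in p$ as $A$ is finite and $p$ non-principal). This produces a finite coloring $\phi:[X]^k\to\{1,\dots,r\}$, so the inductive hypothesis provides a color $c$ and a set $Z\in p$ with $\phi\equiv c$ on $[Z]^k$.

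To extract a monochromatic infinite $Y\subseteq Z$, I would run a diagonal construction: pick $y_1\in Z$ arbitrarily; having chosen distinct $y_1,\dots,y_n\in Z$, observe that $Z\cap\bigcap_{A\in[\{y_1,\dots,y_n\}]^k}B_A$ is a finite intersection of $p$-large sets, hence again lies in $p$ and is infinite, so I can select $y_{n+1}$ from it distinct from $y_1,\dots,y_n$. Setting $Y=\{y_n:n\in\N\}$, any $(k+1)$-subset $\{y_{i_1},\dots,y_{i_{k+1}}\}$ with $i_1<\dots<i_{k+1}$ has the form $A\cup\{y_{i_{k+1}}\}$ where $A=\{y_{i_1},\dots,y_{i_k}\}\in[Z]^k$, and $y_{i_{k+1}}\in B_A$ forces $\chi(A\cup\{y_{i_{k+1}}\})=\phi(A)=c$.

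The only real technical point is the bookkeeping of the diagonal step, namely ensuring that a single choice of $y_{n+1}$ simultaneously handles every $k$-subset formed from earlier $y_i$'s. The ultrafilter formulation makes this automatic because finite intersections of $p$-sets remain $p$-sets and are therefore infinite. A purely elementary alternative would build a descending chain $X\supseteq X_1\supseteq X_2\supseteq\cdots$ of infinite sets with distinguished $x_n\in X_{n-1}$ and apply pigeonhole to the sequence of "stage colors"; this is essentially equivalent but less transparent in the algebraic setting of the rest of the paper.
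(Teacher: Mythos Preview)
The paper does not prove Ramsey's theorem; it is merely quoted as a classical background result and attributed to \cite{ramsey}. So there is no ``paper's own proof'' to compare against, and your proposal must be judged on its own merits.

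Your argument has a genuine gap. The ``slightly sharper statement'' you set up as the inductive hypothesis --- that for \emph{every} non-principal $p$ and every $\chi:[X]^k\to\{1,\dots,r\}$ there is a monochromatic $Z\in p$ --- is false for $k\ge 2$. That property is precisely the characterisation of \emph{Ramsey (selective) ultrafilters}, and an arbitrary non-principal $p$ need not satisfy it. Concretely, your own inductive step witnesses the problem: from the hypothesis at level $k$ you build an infinite $Y$ by diagonalisation, but $Y$ is a countable set chosen element by element and you never show $Y\in p$ (indeed you cannot, in general). Thus the induction does not close beyond $k=1\Rightarrow k=2$.

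The standard repair is to reverse the order of the two steps. First run the diagonal construction directly on $X$ using $p$ only to make coherent choices: having picked $y_1,\dots,y_n$, choose $y_{n+1}$ in the $p$-large set $\bigcap_{A\in[\{y_1,\dots,y_n\}]^{k}}B_A$. This produces an infinite $Y'=\{y_n:n\in\N\}$ together with a well-defined ``tail'' coloring $\phi:[Y']^{k}\to\{1,\dots,r\}$, where $\phi(A)$ is the common value of $\chi(A\cup\{y_j\})$ for all $j>\max\{i:y_i\in A\}$. \emph{Then} apply the ordinary inductive hypothesis (an infinite monochromatic set, with no $p$-largeness claimed) to $\phi$ on $Y'$ to obtain the desired $Y\subseteq Y'$. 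With this reordering the ultrafilter is used only as a bookkeeping device, the inductive statement is exactly Ramsey's theorem itself, and the argument goes through for all $k$.
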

The following theorem was due to K. Milliken \cite{m} and A. Taylor \cite{t} generalizes both Ramsey's theorem and Hindman's theorem, known as the Milliken-Taylor theorem. They originally proved two different but equivallent versions of the Milliken-Taylor theorem (see \cite{bhw}). Here we quote the version from \cite{m}.

\begin{thm}[Milliken-Taylor]\label{mt}
    Let $m,r\in \N$, and $[\N]^m=\bigcup_{i=1}^rC_i$ be a $r$-coloring. Let $\langle x_n\rangle_n$ be a sequence in $\N$. Then there exists $i\in \{1,2,\ldots ,r\}$, and a sum subsystem  $FS(\langle y_n\rangle_n)$ of $FS(\langle x_n\rangle_n)$ such that  $[FS(\langle y_n\rangle_n)]_{<}^m\subseteq C_i.$ 
\end{thm}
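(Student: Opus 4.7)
The plan is to deploy the tensor-product/Lifting-Lemma machinery developed in the paper. First, by the standard correspondence between idempotent ultrafilters and IP sets \cite{key-11}, choose an idempotent $p\in E(\beta\N,+)$ with $FS(\langle x_n\rangle_n)\in p$. Every such idempotent is non-principal, so each cofinite subset of $\N$ lies in $p$. A short induction on $m$ based on the tensor-product definition then yields
\[
\Delta_m:=\{(a_1,\ldots,a_m)\in\N^m:a_1<a_2<\cdots<a_m\}\in p^{\otimes m},
\]
where $p^{\otimes m}:=\bigotimes_{j=1}^{m}p$.

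Lift the given partition $[\N]^m=\bigcup_{i=1}^{r}C_i$ to $\Delta_m$ via $\tilde C_i:=\{(a_1,\ldots,a_m)\in\Delta_m:\{a_1,\ldots,a_m\}\in C_i\}$. Because $p^{\otimes m}$ is an ultrafilter containing $\Delta_m$, some $\tilde C_{i_0}$ belongs to $p^{\otimes m}$. Apply Lemma \ref{Lifting lemma} with $A=\tilde C_{i_0}$ and $p_1=\cdots=p_m=p$ to obtain maps $D_j:\N^{j-1}\to\mathcal{P}(\N)$, $j=1,\ldots,m$, such that (i) whenever $w_s\in D_s(w_1,\ldots,w_{s-1})$ for every $s<j$, the set $D_j(w_1,\ldots,w_{j-1})$ belongs to $p$, and (ii) every compatible full tuple $(w_1,\ldots,w_m)$ lies in $\tilde C_{i_0}$.

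The remaining work is to recursively choose an increasing sequence $\langle H_n\rangle_n$ in $\G(\N)$ so that the sum subsystem $y_n:=\sum_{t\in H_n}x_t$ satisfies the invariant: for every $j\le m$ and every $F_1<F_2<\cdots<F_j$ with $\bigcup_s F_s\subseteq\{1,\ldots,n\}$,
\[
\sum_{t\in F_j}y_t\in D_j\Bigl(\sum_{t\in F_1}y_t,\ldots,\sum_{t\in F_{j-1}}y_t\Bigr).
\]
Once such a sequence is produced, conclusion (ii) of the Lifting Lemma places every tuple of $[FS(\langle y_n\rangle_n)]_<^m$ into $\tilde C_{i_0}$, and by the definition of $\tilde C_{i_0}$ this gives $[FS(\langle y_n\rangle_n)]_<^m\subseteq C_{i_0}$, proving the theorem.

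The hard part is showing that the recursion never gets stuck. At stage $n$, the candidate $y_n$ must simultaneously satisfy finitely many constraints of the form $y_n\in D_j(\cdots)-\sum_{t\in E}y_t$, indexed by configurations $(j,F_1,\ldots,F_{j-1},E)$ inside $\{1,\ldots,n-1\}$, and must also be representable as $\sum_{t\in H_n}x_t$ with $H_n>H_{n-1}$ and sufficiently large to force all sums $\sum_{t\in F}y_t$ with $F\in\F$ to be pairwise distinct. To guarantee that each translate $-\sum_{t\in E}y_t+D_j(\cdots)$ lies in $p$, I would strengthen the invariant to demand that each $y_t$ belong to the $\star$-set $D_j(\cdots)^\star:=\{x\in D_j(\cdots):-x+D_j(\cdots)\in p\}$, which is itself in $p$ by idempotence of $p$. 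This $\star$-set bookkeeping is the same device used classically to extract a sum subsystem from any member of an idempotent, and an iterated application makes the intersection of all constraints at stage $n$ lie in $p$, producing a valid $y_n$.
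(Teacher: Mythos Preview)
The paper does not supply its own proof of Theorem~\ref{mt}; it is quoted as a classical result of Milliken~\cite{m} and Taylor~\cite{t}, with the ultrafilter treatment deferred to~\cite{bhw}. So there is no ``paper's proof'' to compare against directly. Your argument is correct in outline and is precisely the tensor-product route of~\cite{bhw} that the paper is built around: pick an idempotent adapted to the given sequence, observe $\Delta_m\in p^{\otimes m}$, pull back the colouring, apply the Lifting Lemma~\ref{Lifting lemma} to obtain the tree of sets $D_j$, and then run the $\star$-set recursion to select $y_n=\sum_{t\in H_n}x_t$ so that every admissible partial configuration lands in the relevant $D_j(\cdots)^\star$.

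One technical point needs tightening. Requiring only $FS(\langle x_n\rangle_n)\in p$ is not quite enough to keep the recursion alive: at stage $n$ you must intersect the finitely many constraints with $FS(\langle x_t\rangle_{t>\max H_{n-1}})$ in order to choose $H_n>H_{n-1}$, and for that set to lie in $p$ you need the idempotent in the compact subsemigroup $\bigcap_{N}\overline{FS(\langle x_n\rangle_{n\ge N})}$, not merely in $\overline{FS(\langle x_n\rangle_n)}$. This is the ``standard correspondence'' you allude to, but it should be stated as such. With that adjustment, and with the invariant phrased as $\sum_{t\in F_j}y_t\in D_j(\cdots)^\star$ (rather than only each individual $y_t$), the induction goes through exactly as you describe: for $E\ne\emptyset$ the hypothesis gives $-\sum_{t\in E}y_t+D_j(\cdots)^\star\in p$, for $E=\emptyset$ one has $D_j(\cdots)^\star\in p$ by the Lifting Lemma, and the finite intersection together with the tail of $FS(\langle x_t\rangle)$ remains in $p$.
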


To explain several extensions of the Milliken-Taylor theorem, we need the following notions of {\it Milliken-Taylor system.} 

\begin{defn}[Milliken-Taylor system]\label{defmillikentaylor}
    Let $m\in \N,$ let $\langle a_j\rangle_{j=1}^m$ and $\langle x_n\rangle_{n}$ be sequences in $\N.$ The
Milliken-Taylor System determined by $\langle a_j\rangle_{j=1}^m$ and $\langle x_n\rangle_{n}$ is 
$$MT\left(\langle a_j \rangle_{j=1}^m, \langle x_n\rangle_{n}\right)=\left\lbrace \sum_{j=1}^m a_j \cdot \sum_{t\in F_j}x_t:F_1<F_2<\cdots <F_m \right\rbrace.$$
\end{defn}

The following theorem is another version of Theorem \ref{mt} that uses linear combination of idempotent ultrafilters. 

\begin{thm}\cite[Theorem 1.11.]{bhw}\label{mt2}
    Let $K\in \N,$ let  $\langle a_j\rangle_{j=1}^k,$ and $\langle x_n\rangle_{n}$ be sequences in $\N.$ Let $g(z)=\sum_{j=1}^k a_jz$, and let $A\subseteq \N$. The following statements are equivalent.

    \begin{enumerate}
        \item There is an idempotent $p\in \bigcap_{m=1}^\infty \overline{FS\left(\langle x_n\rangle_{n=m}^\infty\right)}$ such that $A\in \tilde{g}(p)$

        \item There is a sum subsystem $FS\left(\langle y_n\rangle_{n=1}^\infty\right)$ of  $FS\left(\langle x_n\rangle_{n=1}^\infty\right)$ such that $MT\left(\langle a_j\rangle_{j=1}^m, \langle x_n\rangle_{n}\right)\subseteq A.$
    \end{enumerate}

\end{thm}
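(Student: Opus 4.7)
The plan is to interpret $\tilde{g}(p)$ as the push-forward of the $k$-fold tensor power $p^{\otimes k}\in\beta(\N^k)$ under the natural map $g:\N^k\to\N$, $g(z_1,\ldots,z_k)=\sum_{j=1}^k a_j z_j$, so that $A\in\tilde{g}(p)$ amounts to $g^{-1}(A)\in p^{\otimes k}$. Under this viewpoint the Milliken-Taylor system in (2) is exactly the image under $g$ of the set of ``ordered disjoint block-tuples'' drawn from $FS(\langle y_n\rangle)$, and the two directions of the equivalence become a push-forward / pull-back pair, with the Lifting Lemma (Lemma \ref{Lifting lemma}) serving as the dictionary between tensor-membership and MT-extraction.

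For (2) $\Rightarrow$ (1), I would first note that $T=\bigcap_{m=1}^\infty \overline{FS(\langle y_n\rangle_{n=m}^\infty)}$ is a nonempty compact subsemigroup of $(\beta\N,+)$; by Ellis's theorem it contains an idempotent $p$, and the sum-subsystem property forces $p\in\bigcap_m \overline{FS(\langle x_n\rangle_{n=m}^\infty)}$. To establish $g^{-1}(A)\in p^{\otimes k}$, I would exhibit the subset
\[
B=\{(z_1,\ldots,z_k)\in\N^k:\exists F_1<F_2<\cdots<F_k \text{ in } \F\ \text{with}\ z_j=\textstyle\sum_{t\in F_j}y_t\ \text{for each}\ j\},
\]
which lies in $p^{\otimes k}$: unraveling the iterated-limit definition, for $p$-almost all $z_1\in FS(\langle y_n\rangle)$ one picks a block representation $F_1$, then for $p$-almost all $z_2\in FS(\langle y_n\rangle_{n=\max F_1+1}^\infty)$ (which is in $p$ since $p\in T$) one picks $F_2$, and so on through $k$ steps. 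Since $B\subseteq g^{-1}(A)$ by hypothesis, the conclusion follows.

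For (1) $\Rightarrow$ (2), I would apply the Lifting Lemma to $p^{\otimes k}$ and $g^{-1}(A)$, obtaining functions $D_j:\N^{j-1}\to\mathcal{P}(\N)$ with the two stated properties. I would then carry out a Galvin-Glazer-style recursion to construct the sum subsystem $\langle y_n\rangle$: at stage $n$, having chosen $y_1,\ldots,y_{n-1}$ as block-sums $y_i=\sum_{t\in G_i}x_t$ with $G_1<\cdots<G_{n-1}$, enumerate all partial tuples $(z_1,\ldots,z_{j-1})$ (for $j\leq k$) that can be realized as MT-compatible block-sums from $\{y_1,\ldots,y_{n-1}\}$; each associated $D_j(z_1,\ldots,z_{j-1})$ belongs to $p$; intersect all of them together with a sufficiently high tail FS-set, and use the idempotency of $p$ in the form $\{y\in E:E-y\in p\}\in p$ to select $y_n=\sum_{t\in G_n}x_t$ inside this intersection with $G_n>G_{n-1}$. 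Any MT-sum $\sum_{j=1}^k a_j\sum_{t\in F_j}y_t$ with $F_1<\cdots<F_k$ then satisfies $w_j:=\sum_{t\in F_j}y_t\in D_j(w_1,\ldots,w_{j-1})$ for each $j$, hence lies in $A$ by the Lifting Lemma.

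The main obstacle is the bookkeeping in the Galvin-Glazer step: unlike the ordinary FS-extraction used in proofs of Hindman's theorem, where a single promise set in $p$ suffices, the Lifting Lemma produces \emph{conditional} promise sets $D_j$ depending on previously fixed coordinates, so the number of active promises at stage $n$ grows with $n$. What saves us is that at each finite stage only finitely many partial tuples arise, so the intersection of their associated $p$-sets is again in $p$, and idempotency lets us maintain the FS-closure while enforcing the block-disjoint increasing structure $F_1<\cdots<F_k$. The strict ordering of blocks in the MT-system matches exactly the left-to-right iterated-limit order in the definition of $p^{\otimes k}$, which is precisely why the Lifting Lemma is the natural tool.
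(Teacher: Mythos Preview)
The paper does not supply its own proof of this statement: Theorem~\ref{mt2} is quoted from \cite[Theorem 1.11]{bhw} as background material and is stated without any argument in the present paper. There is therefore nothing here to compare your proposal against.

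That said, your sketch follows the standard route by which this result is proved. One point of notation is worth flagging: in the statement, $g(z)=\sum_{j=1}^k a_j z$ is a polynomial in \emph{one} variable, and in the convention of \cite{bhw} the symbol $\tilde g(p)$ stands for the element $a_1\cdot p + a_2\cdot p + \cdots + a_k\cdot p$ of $\beta\N$, obtained by formally substituting $p$ for $z$ and reading the operations in $\beta\N$. Your reformulation via the $k$-variable map $(z_1,\ldots,z_k)\mapsto\sum_j a_j z_j$ and the push-forward of $p^{\otimes k}$ is equivalent to this---indeed, the present paper carries out precisely that identification in the proof of Theorem~\ref{book}---so your set-up is legitimate, and the Lifting Lemma combined with a Galvin--Glazer recursion is exactly the expected mechanism for the $(1)\Rightarrow(2)$ direction.
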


Using the tensor product of ultrafilters one can find a seemingly stronger version of the Milliken-Taylor theorem.

\begin{thm}\label{mt1}\textup{\cite[Theorem 1.17, Milliken-Taylor Theorem \MakeUppercase{\romannumeral 1}]{bhw}}
    Let $(S,+)$ be a commutative semigroup, and $m\in \N,$ and let $A\subseteq \bigtimes_{i=1}^m S.$  Then the
following statements are equivalent.
\begin{enumerate}
    \item There is a sequence $\langle x_n\rangle_{n=1}^\infty$ in $S$ such that
    $$\left\lbrace \left(\sum_{t\in F_1}x_t,\sum_{t\in F_2}x_t,\ldots ,\sum_{t\in F_k}x_t\right):F_1<F_2<\cdots <F_k \right\rbrace \subseteq A.$$
    \item There is an idempotent $p\in \beta S$ such that $A \in \bigotimes_{i=1}^m p.$
\end{enumerate}
\end{thm}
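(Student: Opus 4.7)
The plan is to prove both directions using the Galvin--Glazer idempotent machinery together with the Lifting Lemma, with one implication proceeding by induction on $m$ and the other by a recursive construction of the sequence. I treat the $k$ in statement (1) as the same index $m$ stated in the theorem.

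For the direction $(2) \Rightarrow (1)$: starting from an idempotent $p \in \beta S$ with $A \in \bigotimes_{i=1}^m p$, I would apply Lemma \ref{Lifting lemma} with $p_1 = \cdots = p_m = p$ to obtain functions $D_j : S^{j-1} \to \mathcal{P}(S)$. The aim is to build a sequence $\langle x_n\rangle$ so that every ordered block sum $w_s = \sum_{t \in F_s} x_t$ (with $F_1 < \cdots < F_m$) satisfies $w_s \in D_s(w_1, \ldots, w_{s-1})$, for then the conclusion of the Lifting Lemma forces $(w_1, \ldots, w_m) \in A$. At stage $n$, list the finitely many admissible partial tuples $(w_1, \ldots, w_{j-1})$ realizable from $\{x_1, \ldots, x_{n-1}\}$, intersect the corresponding $D_j$'s into a set $E_n$, and choose $x_n$ in the refined star set $E_n^\star = \{x \in E_n : -x + E_n \in p\}$. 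Since $E_n$ is a finite intersection of sets in $p$ and $p$ is idempotent, both $E_n$ and $E_n^\star$ lie in $p$; iterating the star operation ensures every future sum beginning from $x_n$ lands back in $E_n$.

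For the direction $(1) \Rightarrow (2)$: given $\langle x_n\rangle$ whose ordered-block sums lie in $A$, the set $\bigcap_{k=1}^\infty \overline{FS(\langle x_n\rangle_{n=k}^\infty)}$ is a compact subsemigroup of $(\beta S, +)$, hence contains an idempotent $p$ by Ellis's theorem. I would then show $A \in \bigotimes_{i=1}^m p$ by induction on $m$. For the inductive step, fix $s_1 = \sum_{t \in F_1} x_t$ with $N = \max F_1$; the fiber $A_{s_1} = \{(s_2,\ldots,s_m) : (s_1,\ldots,s_m) \in A\}$ contains $[FS(\langle x_n\rangle_{n=N+1}^\infty)]_<^{m-1}$, which lies in $\bigotimes_{i=2}^m p$ by the induction hypothesis applied to the tail sequence (using $p \in \overline{FS(\langle x_n\rangle_{n=N+1}^\infty)}$). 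Since the set of admissible $s_1$ contains $FS(\langle x_n\rangle_{n=1}^\infty) \in p$, the definition of the tensor product yields $A \in \bigotimes_{i=1}^m p$.

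The main obstacle will be the recursive construction in the $(2) \Rightarrow (1)$ direction, specifically the bookkeeping required to identify which $D_j$ a given future sum must land in. Because the value of $D_j(w_1, \ldots, w_{j-1})$ depends on the actual block sums $w_s$ rather than merely on their block indices, the defining intersection $E_n$ must anticipate every ordered partition of the initial segment $\{1, \ldots, n-1\}$ into at most $m-1$ blocks. The finiteness of such partitions at each stage, combined with the standard $B \mapsto B^\star$ trick, resolves this, but carrying out the induction cleanly so that $x_n$ simultaneously meets all these constraints and remains compatible with the constraints from earlier stages is where most of the care is needed.
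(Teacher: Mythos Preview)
The paper does not give its own proof of Theorem~\ref{mt1}; it is quoted verbatim from \cite[Theorem~1.17]{bhw} and used as a black box. So there is no in-paper argument to compare against.

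That said, your outline is the standard proof and is correct in both directions. Your $(1)\Rightarrow(2)$ argument---picking $p$ in the compact subsemigroup $\bigcap_{k}\overline{FS(\langle x_n\rangle_{n=k}^\infty)}$ and then running a downward induction on the coordinate index---is exactly the template the present paper adapts in its proof of Theorem~\ref{todo0} for the $C$-set analogue. Your $(2)\Rightarrow(1)$ argument via the Lifting Lemma and the $B\mapsto B^\star$ refinement is likewise the standard route; the bookkeeping you flag (enumerating all ordered block-partitions of $\{1,\ldots,n-1\}$ into at most $m-1$ blocks at stage $n$) is finite at each stage, so the intersection defining $E_n$ stays in $p$ and the construction goes through. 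One small point worth making explicit in a full write-up: when you pass from $E_n^\star$ to $E_{n+1}^\star$ you need $E_{n+1}\subseteq E_n$ (or at least that elements chosen in $E_{n+1}^\star$ also satisfy the earlier constraints), which follows because the set of partial tuples only grows as $n$ increases; stating this monotonicity cleanly is what makes the ``compatible with earlier stages'' clause automatic.
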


The following theorem is another version of the Milliken-Taylor theorem. Here one considers tensor products of different idempotent ultrafilters.
\begin{thm}\textup{\cite[Theorem 1.16, Milliken-Taylor Theorem \MakeUppercase{\romannumeral 2}]{bhw}}\label{newaddmt}
    Let $(S,+)$ be a commutative semigroup, and $m\in \N,$ and let $A\subseteq \bigtimes_{i=1}^m S.$  The
following statements are equivalent.
\begin{enumerate}
    \item For each $i\in \{1,2,\ldots ,m\}$,  there exist sequences $\langle x_{i,n}\rangle_{n=1}^\infty$ in $S$ such that
    $$\left\lbrace \left(\sum_{t\in F_1}x_{1,t},\sum_{t\in F_2}x_{2,t},\ldots ,\sum_{t\in F_k}x_{m,t}\right):F_1<F_2<\cdots <F_k \right\rbrace \subseteq A.$$
    \item There exist idempotents $p_1,\ldots ,p_m\in \beta S$ such that $A \in \bigotimes_{i=1}^m p_i.$
\end{enumerate}
\end{thm}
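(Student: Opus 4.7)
The plan is to prove both implications by techniques standard in the algebra of $\beta S$. The direction $(1)\Rightarrow(2)$ will use that each $\bigcap_n \overline{FS(\langle x_{i,k}\rangle_{k\geq n})}$ is a closed subsemigroup of $(\beta S,+)$, hence contains an idempotent by Ellis's theorem. The direction $(2)\Rightarrow(1)$ will combine the Lifting Lemma (Lemma~\ref{Lifting lemma}) with a Galvin--Glazer style construction carried out simultaneously on all $m$ sequences.

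For $(1)\Rightarrow(2)$, I would set $T_i=\bigcap_{n=1}^\infty \overline{FS(\langle x_{i,k}\rangle_{k\geq n})}$ for each $i$. A routine Galvin--Glazer argument shows that $T_i$ is a closed subsemigroup of $(\beta S,+)$, so it contains an idempotent $p_i$, and $FS(\langle x_{i,k}\rangle_{k\geq n})\in p_i$ for every $n$. To verify $A\in \bigotimes_{i=1}^m p_i$, I would proceed by downward induction on $j$ from $m$ to $1$: for every choice of $F_1<F_2<\cdots<F_{j-1}$ in $\mathcal{P}_f(\mathbb{N})$ with $w_l=\sum_{t\in F_l}x_{l,t}$, the section
\[
\{(w_j,\ldots,w_m)\in S^{m-j+1}:(w_1,\ldots,w_m)\in A\}
\]
belongs to $\bigotimes_{l=j}^m p_l$. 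The base case $j=m$ holds since the hypothesis (1) guarantees this section (now a subset of $S$) contains $FS(\langle x_{m,k}\rangle_{k>\max F_{m-1}})\in p_m$; the inductive step is a direct unpacking of the tensor-product definition, noting that the set of admissible $w_j$ contains $FS(\langle x_{j,k}\rangle_{k>\max F_{j-1}})\in p_j$. Specializing to $j=1$ gives $A\in\bigotimes_{i=1}^m p_i$.

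For $(2)\Rightarrow(1)$, I would apply the Lifting Lemma to $p_1,\ldots,p_m$ and $A$ to produce functions $D_j:S^{j-1}\to\mathcal{P}(S)$ such that $D_j(w_1,\ldots,w_{j-1})\in p_j$ along every consistent history and such that any consistent chain $(w_1,\ldots,w_m)$ lies in $A$. The target is to build sequences $\langle x_{i,n}\rangle_{n=1}^\infty$ so that, for every $F_1<\cdots<F_m$, the sums $w_i=\sum_{t\in F_i}x_{i,t}$ form a consistent chain. I would construct them in rounds $N=1,2,\ldots$, choosing $x_{1,N},x_{2,N},\ldots,x_{m,N}$ in order within each round. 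When $x_{i,N}$ is selected, the active constraints are indexed by a history $F_1<\cdots<F_{i-1}$ with $F_l\subseteq\{1,\ldots,N-1\}$ together with an auxiliary subset $F\subseteq\{\max F_{i-1}+1,\ldots,N-1\}$, and each demands $\sum_{t\in F}x_{i,t}+x_{i,N}\in D_i(w_1,\ldots,w_{i-1})$. Passing to the starred set $D_i(\cdots)^{\star}=\{s\in D_i(\cdots):-s+D_i(\cdots)\in p_i\}$, which remains in $p_i$ by idempotency, each constraint reduces to requiring $x_{i,N}$ to lie in a member of $p_i$; finitely many such constraints intersect in a set in $p_i$, from which $x_{i,N}$ is chosen.

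The main obstacle is the bookkeeping in this simultaneous construction: one must carry through all rounds the invariant that every relevant partial sum of sequence $i$ lies in the appropriate starred set $D_i(\cdots)^{\star}$, so that the translates $-\sigma+D_i(\cdots)^{\star}$ remain in $p_i$ whenever we need to refer to them. Conceptually the proof parallels that of Theorem~\ref{mt1} (the case $p_1=\cdots=p_m$), the new feature being that each coordinate $i$ consults its own idempotent $p_i$; no essentially new ingredient beyond Galvin--Glazer and the Lifting Lemma is needed.
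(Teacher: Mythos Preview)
The paper does not prove Theorem~\ref{newaddmt}; it is quoted verbatim as \cite[Theorem~1.16]{bhw} and used only as background. There is therefore no ``paper's own proof'' to compare against. Your proposal is correct and follows the standard route one would expect in \cite{bhw}: idempotents in the closed subsemigroups $\bigcap_n\overline{FS(\langle x_{i,k}\rangle_{k\ge n})}$ plus downward induction for $(1)\Rightarrow(2)$, and the Lifting Lemma together with a Galvin--Glazer construction for $(2)\Rightarrow(1)$. Indeed, your downward-induction argument for $(1)\Rightarrow(2)$ is exactly the template the present paper adopts when proving its own Theorem~\ref{todo0} (and Theorem~\ref{newmul1}), so your sketch is fully in line with the methods used here.
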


In \cite{b}, Beigelb\"{o}ck simultaneously extends the combinatorial consequences of both of the Theorem \ref{cst} and Theorem \ref{mt1}. Unfortunately, Beigelb\"{o}ck's result does not address the ultrafilter characterization of sets witnessing his result. Let us first state his result, and then we address our results.

\begin{thm}\label{bei}
    Let $(S,+)$ be a commutative semigroup and assume that there exists a nonprincipal minimal idempotent in $\beta S.$  For each $l\in \N$, let $\langle y_{l,n}\rangle_{n}$ be a sequence in $S.$ Let $k,r\in \N$ and let $[S]^k=\bigcup_{i}^rA_i.$ There exist $i\in \{1,2,\ldots ,r\}$ a sequence $\langle a_n\rangle_n$ in $S$ and a sequence $\alpha_0<\alpha_1<\cdots <\alpha_n<\cdots$ in $\G(\N)$ such that $\Big[ FS\big(\langle a_n\prod_{t\in \alpha_n}y_{g(n),t}\rangle_n\big)\Big]_<^k\subset A_i$
\end{thm}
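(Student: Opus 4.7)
The plan is to combine the $k$-fold tensor power construction used in Milliken--Taylor (Theorem \ref{mt1}) with the Central Sets Theorem. By hypothesis, fix a nonprincipal minimal idempotent $p\in\beta S$ and form $P=\bigotimes_{j=1}^{k}p\in\beta(S^{k})$. Since the tensor product of idempotents is an idempotent, $P$ is an idempotent. Viewing the coloring $[S]^{k}=\bigcup_{i=1}^{r}A_{i}$ as a finite coloring of the off-diagonal $k$-tuples in $S^{k}$ (ordered along the eventual FS-block indices), some color class $A_{i_{0}}$ lies in $P$.

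Next, apply the Lifting Lemma (Lemma \ref{Lifting lemma}) with $m=k$ and $p_{1}=\cdots=p_{k}=p$ to the set $A_{i_{0}}$. This yields maps $D_{j}\colon S^{j-1}\to\mathcal{P}(S)$ with the property that $D_{j}(w_{1},\ldots,w_{j-1})\in p$ whenever $w_{s}\in D_{s}(w_{1},\ldots,w_{s-1})$ for every $s<j$, and such that any consistent choice of length $k$ yields $(w_{1},\ldots,w_{k})\in A_{i_{0}}$.

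With this bookkeeping fixed, recursively build $\langle b_{n}\rangle_{n}$ of the required form $b_{n}=a_{n}+\sum_{t\in\alpha_{n}}y_{g(n),t}$ (switching to additive notation). At stage $n$, set $N_{n}=\max\alpha_{n-1}$ (with $N_{0}=0$) and let
\[
E_{n}=\bigcap\bigl\{\,D_{j}(w_{1},\ldots,w_{j-1})\cap\{s\in S:-s+D_{j}(w_{1},\ldots,w_{j-1})\in p\}\;:\;1\le j\le k,\ w_{s}=\textstyle\sum_{m\in F_{s}}b_{m}\text{ for some }F_{1}<\cdots<F_{j-1}\subseteq\{0,\ldots,n-1\}\,\bigr\}.
\]
This is a finite intersection of members of $p$, so $E_{n}\in p$ and is therefore central. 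Apply the Central Sets Theorem (Theorem \ref{cst}) to the central set $E_{n}$ and the single sequence $\langle y_{g(n),m}\rangle_{m>N_{n}}$ to obtain $a_{n}\in S$ and $\alpha_{n}\in\mathcal{P}_{f}(\mathbb{N})$ with $\min\alpha_{n}>N_{n}$ and $a_{n}+\sum_{t\in\alpha_{n}}y_{g(n),t}\in E_{n}$; declare $b_{n}$ to be this element. By construction $\alpha_{0}<\alpha_{1}<\cdots$.

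For verification, given any $F_{1}<F_{2}<\cdots<F_{k}$ in $\mathcal{P}_{f}(\mathbb{N})$, set $w_{j}=\sum_{n\in F_{j}}b_{n}$. The presence of both $D_{j}(\cdot)$ and the shift set $\{s:-s+D_{j}(\cdot)\in p\}$ in every $E_{n}$ guarantees, by the usual Hindman-style unwinding, that $w_{j}\in D_{j}(w_{1},\ldots,w_{j-1})$ for every $j$; the Lifting Lemma then gives $(w_{1},\ldots,w_{k})\in A_{i_{0}}$, which is precisely the desired inclusion. The main technical obstacle is exactly the above bookkeeping: at stage $n$ we must anticipate every later configuration in which $b_{n}$ participates additively in one of the $k$ coordinate partial sums, and this is managed by the shift operation $A\mapsto\{s:-s+A\in p\}$. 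The crucial feature that keeps the induction running is that each refined set remains in the \emph{minimal} idempotent $p$, hence stays central, so the Central Sets Theorem continues to produce the next $a_{n}$ and $\alpha_{n}$ of the prescribed form.
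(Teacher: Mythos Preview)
The paper does not prove Theorem~\ref{bei}; it is quoted from Beiglb\"{o}ck~\cite{b} as background and motivation for the authors' own Theorem~\ref{todo0}, so there is no in-paper proof to compare against.

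On the merits of your argument: the overall architecture---take the $k$-fold tensor power of a minimal idempotent, invoke the Lifting Lemma for the selected color class, and recursively build the $b_n$ by applying the Central Sets Theorem to an ever-refining central set $E_n$ obtained from the $D_j$'s and their $\star$-versions---is exactly the right plan and is in the spirit of Beiglb\"{o}ck's own proof. There is, however, a genuine gap concerning $g$. In the paper's statement $g$ appears unquantified (a typo), but in the intended theorem the single choice of $\langle a_n\rangle_n$ and $\langle\alpha_n\rangle_n$ must work for \emph{every} $g\colon\mathbb{N}\to\mathbb{N}$; this universality over the selector $g$ is precisely what makes the result an extension of the Central Sets Theorem, where one may freely choose among the given sequences at every step. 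You treat $g$ as fixed in advance and, at stage $n$, apply Theorem~\ref{cst} to the \emph{single} sequence $\langle y_{g(n),m}\rangle_{m>N_n}$. The resulting $a_n,\alpha_n$ then depend on $g$, so you have not produced the uniform objects the theorem asserts. To close the gap you must, at each stage, feed the whole relevant family of sequences into the Central Sets Theorem simultaneously, so that $a_n+\sum_{t\in\alpha_n}y_{l,t}\in E_n$ for every $l$ at once; since $l$ ranges over $\mathbb{N}$, this already pushes beyond the finite-family version in Theorem~\ref{cst} toward the $J$-set machinery. A secondary quibble: Theorem~\ref{cst} as stated in this paper is for $\mathbb{N}$, so for a general commutative semigroup $S$ you should invoke the semigroup version rather than the reference you cite.
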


Our first result is the improvement of Theorem \ref{bei}. We simultaneously extend Theorem \ref{scst} and Theorem \ref{mt1} both in the algebraic and combinatorial direction.  In our article, {\it we restrict ourselves to the set of natural numbers, but this technique can be adapted for any discrete commutative semigroups.}

\begin{thm}[Multidimensional Stronger Central Sets Theorem \MakeUppercase{\romannumeral 1}]\label{todo0}
    Let $m\in \N$, and $A\subseteq \N^m$. Then the following two statements are equivalent.

  \begin{enumerate}
      \item There exists functions $\alpha : \mathcal{P}_f(^\N\mathbb{N})\to \mathbb{N}$ and $H: \mathcal{P}_f(^\N\mathbb{N}) \to \mathcal{P}_f\left(\mathbb{N}\right)$ such that 
         \begin{enumerate}
             \item \label{1.41} if $F,G \in \mathcal{P}_f(\tau)$ and $F \subsetneq G$ then $\max H(F) < \min H(G)$, and 
            \item \label{1.42} if $n (\ge m)\in\mathbb N$, $G_1\subsetneq G_2\subsetneq\dots\subsetneq G_n$, then for each $\beta_1<\beta_2<\dots<\beta_m\le\{n\},$ 
            in $\G(\N)$ we have 
            $$\left\lbrace \left( \sum_{i\in \beta_1}\left(\alpha(G_i)+\sum_{t\in H(G_i)}f_i(t)\right),\cdots ,\sum_{i\in \beta_m}\left(\alpha(G_i)+\sum_{t\in H(G_i)}f_i(t)\right):f_i\in G_i \text{ for every }i\in \N \right)\right\rbrace\subset A.$$
        \end{enumerate}
     \item There exists an  idempotent $p$ in $(\mathcal{J},+)$, such that $A\in \bigotimes_{i=1}^m p.$

\end{enumerate}  
    
\end{thm}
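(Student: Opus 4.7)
The plan is to prove the two implications separately. The direction $(1)\Rightarrow(2)$ goes through a compact-subsemigroup argument in $\beta\N$ supplemented by induction on the dimension, while $(2)\Rightarrow(1)$ is a tensor-product reinforcement of the classical De--Hindman--Strauss argument using the Lifting Lemma and the ``star'' operation of an idempotent.

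For $(1)\Rightarrow(2)$, for each $F\in\mathcal{P}_f({}^{\N}\N)$ set
\[
Y(F)=\Big\{\sum_{i=1}^{n}\bigl(\alpha(G_i)+\sum_{t\in H(G_i)}f_i(t)\bigr) : n\in\N,\ F\subsetneq G_1\subsetneq\cdots\subsetneq G_n,\ f_i\in G_i\Big\}\subseteq\N,
\]
and let $T=\bigcap_F\overline{Y(F)}\subseteq\beta\N$. The identity $x+Y(G_n)\subseteq Y(F)$ for $x$ realised by a chain ending in $G_n$ shows that $T$ is a nonempty compact subsemigroup of $(\beta\N,+)$; each $Y(F)$ is a $J$-set (given $H\in\mathcal{P}_f({}^{\N}\N)$ take $G\supsetneq F$ with $G\supseteq H$), so $T\subseteq\mathcal{J}$. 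By Ellis's theorem fix an idempotent $p\in T\cap\mathcal{J}$. To obtain $A\in\bigotimes_{i=1}^m p$, prove by induction on $k$ the following: if $B\subseteq\N^k$ has the property that, for some $F_0\in\mathcal{P}_f({}^{\N}\N)$, every chain $F_0\subsetneq G_1\subsetneq\cdots\subsetneq G_n$ with $n\ge k$, every $\beta_1<\cdots<\beta_k$ in $\mathcal{P}_f(\N)$ contained in $\{1,\ldots,n\}$, and every $f_i\in G_i$ produce a $k$-tuple in $B$, then $B\in\bigotimes_{i=1}^k p$. The base $k=1$ is $Y(F_0)\subseteq B\in p$. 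For the step, any $x_1\in Y(F_0)$ is realised by a chain ending in some $G_{n_1}$; concatenating with an arbitrary chain above $G_{n_1}$ and shifting the $\beta_j'$'s by $n_1$ shows that $B_{x_1}$ inherits the hypothesis with $F_0'=G_{n_1}$ in dimension $k-1$, so $B_{x_1}\in\bigotimes_{i=2}^k p$ by induction, giving $\{x_1:B_{x_1}\in\bigotimes_{i=2}^k p\}\supseteq Y(F_0)\in p$. Apply with $B=A$ and $k=m$.

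For $(2)\Rightarrow(1)$, fix an idempotent $p\in\mathcal{J}$ with $A\in\bigotimes_{i=1}^m p$ and apply Lemma~\ref{Lifting lemma} (with $p_1=\cdots=p_m=p$) to produce $D_j:\N^{j-1}\to\mathcal{P}(\N)$. Enumerate $\mathcal{P}_f({}^{\N}\N)=\{F_n\}_{n\ge1}$ with $F_i\subsetneq F_j\Rightarrow i<j$, and construct $\alpha(F_n),H(F_n)$ by induction on $n$. A \emph{state} $S$ at step $n$ is a choice of a subchain $F_{i_1}\subsetneq\cdots\subsetneq F_{i_{k-1}}\subsetneq F_n$, a coordinate $l\in\{1,\ldots,m\}$, a block decomposition with $k\in\beta_l$ and nonempty $\beta_j\subseteq\{1,\ldots,k-1\}$ for $j<l$, and selections $f_s\in F_{i_s}$, from which one reads off $w_j(S)=\sum_{s\in\beta_j}z_s(f_s)$ for $j<l$ and $w_l^{-}(S)=\sum_{s\in\beta_l,\,s<k}z_s(f_s)$, where $z_s(f_s)=\alpha(F_{i_s})+\sum_{t\in H(F_{i_s})} f_s(t)$. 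Writing $C^{*}=\{y\in C:-y+C\in p\}$, the idempotency of $p$ gives $C^{*}\in p$ whenever $C\in p$, together with $(-w+C)^{*}=-w+C^{*}$ and hence $-w+C^{*}\in p$ whenever $w\in C^{*}$. Maintain the invariant $w_l^{-}(S)\in D_l(w_1(S),\ldots,w_{l-1}(S))^{*}$ across all states; then
\[
B_n=\bigcap_{S}\bigl(-w_l^{-}(S)+D_l(w_1(S),\ldots,w_{l-1}(S))^{*}\bigr)\in p,
\]
a finite intersection. Since $p\in\mathcal{J}$, $B_n$ is a $J$-set, and its $J$-set property applied to $F_n$---with a standard tail shift of the test functions to enforce $\min H(F_n)>\max\{\max H(F_j):j<n,\ F_j\subsetneq F_n\}$---produces $\alpha(F_n),H(F_n)$ such that $\alpha(F_n)+\sum_{t\in H(F_n)}f(t)\in B_n$ for each $f\in F_n$. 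The invariant is preserved, and clause~(2) of Lemma~\ref{Lifting lemma} converts the resulting compatibly chosen partial sums into the $m$-tuples required in~(1)(b).

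The principal obstacle is the multi-coordinate bookkeeping in the $(2)\Rightarrow(1)$ direction: across every coordinate $l\in\{1,\ldots,m\}$ and every way a chain may eventually be extended, the partial sums $w_l^{-}$ must already sit inside the appropriate starred set $D_l(w_1,\ldots,w_{l-1})^{*}$ before the $J$-set property can be invoked. This is a tensor-product reinforcement of the single-coordinate star-bootstrap used in the De--Hindman--Strauss proof of Theorem~\ref{scst}, now threaded through the dependence on the preceding block-sums $(w_1,\ldots,w_{l-1})$ mediated by the Lifting Lemma's functions $D_j$.
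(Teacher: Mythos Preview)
Your proof follows essentially the same route as the paper's. For $(1)\Rightarrow(2)$ both you and the paper build the compact subsemigroup $T=\bigcap_F\overline{Y(F)}$ (the paper writes $Q(\alpha,H)=\bigcap_G\overline{T_G}$), extract an idempotent $p$ lying in $\mathcal{J}$, and run an induction on the number of coordinates to place $A$ in $\bigotimes_{i=1}^m p$; your upward induction on $k$ is the paper's downward induction on $l$ read the other way. For $(2)\Rightarrow(1)$ the paper simply defers to Theorem~\ref{todo2} with the singleton $T=\{\mathrm{id}\}$, whereas you spell that specialization out directly; the Lifting-Lemma/starred-set bookkeeping you sketch is precisely what the paper carries out in the proof of Theorem~\ref{todo2} (there organised by induction on $|K|$ rather than along an enumeration, but the content is the same).

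One small slip to fix: the inference ``each $Y(F)$ is a $J$-set, so $T\subseteq\mathcal{J}$'' is not valid---knowing that the sets $Y(F)$ belong to $q$ and are $J$-sets says nothing about the \emph{other} members of $q$. What is actually needed (and what the paper likewise asserts without proof, in the form $K(Q(\alpha,H))\subseteq\mathcal{J}$) is only that $T\cap\mathcal{J}$ contains an idempotent. This does follow from your observation: since $J$-sets are partition regular, each $\overline{Y(F)}$ meets $\mathcal{J}$; the family $\{Y(F)\}$ is downward directed, so by compactness $T\cap\mathcal{J}\neq\emptyset$; and as $\mathcal{J}$ is a closed two-sided ideal of $(\beta\N,+)$, the intersection $T\cap\mathcal{J}$ is a compact subsemigroup and hence carries an idempotent by Ellis. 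With this correction your argument goes through unchanged.
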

Similar to the Corollary \ref{yad}, the following corollary of the above theorem gives an infinitary version.

\begin{cor}\label{todo1}
    Let $m\in \N$, and $A\subseteq \N^m$. Then the following two statements are equivalent.

  \begin{enumerate}
      \item there exists functions $\alpha : \mathcal{P}_f(^\N\mathbb{N})\to \mathbb{N}$ and $H: \mathcal{P}_f(^\N\mathbb{N}) \to \mathcal{P}_f\left(\mathbb{N}\right)$ such that 
         \begin{enumerate}
             \item \label{1.41} if $F,G \in \mathcal{P}_f(\tau)$ and $F \subsetneq G$ then $\max H(F) < \min H(G)$, and 
            \item \label{1.42} whenever $(G_i)_{i} \in \mathcal{P}_f(\tau)$, is a sequence of functions, i.e. $G_1 \subsetneq G_2 \subsetneq \cdots \subsetneq G_n\subsetneq \cdots$, then for every $\beta_1<\cdots<\beta_m$ in $\G(\N)$ we have 
            $$\left\lbrace \left( \sum_{i\in \beta_1}\left(\alpha(G_i)+\sum_{t\in H(G_i)}f_i(t)\right),\cdots ,\sum_{i\in \beta_m}\left(\alpha(G_i)+\sum_{t\in H(G_i)}f_i(t)\right):f_i\in G_i \text{ for every }i\in \N \right)\right\rbrace\subset A.$$
        \end{enumerate}
     \item There exists an  idempotent $p$ in $(\mathcal{J},+)$, such that $A\in \bigotimes_{i=1}^m p.$

\end{enumerate}  
    
\end{cor}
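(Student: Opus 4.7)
My plan is to leverage Theorem \ref{todo0} directly, since statement (2) of the corollary is identical to statement (2) of that theorem, and statement (1) of the corollary is an infinitary reformulation of statement (1) there. Both implications will reduce to the observation that any finite family $\beta_1<\cdots<\beta_m$ in $\G(\N)$ lives inside some initial segment $\{1,\ldots,n\}$, exactly the trick used to pass from Theorem \ref{no1} to Corollary \ref{yad}.

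For the direction $(1)\Rightarrow(2)$, I assume $\alpha$ and $H$ satisfy the infinitary condition. Then the same functions automatically satisfy the finitary condition of Theorem \ref{todo0}(1): given any finite chain $G_1\subsetneq\cdots\subsetneq G_n$ in $\G(^\N\N)$, I extend it arbitrarily to an infinite chain (possible because $^\N\N$ is infinite and each $G_i$ is finite), and any admissible choice of $\beta_1<\cdots<\beta_m$ with $\max\beta_m\le n$ is then also an allowed choice in the infinitary statement. Theorem \ref{todo0} therefore produces the required idempotent $p\in(\mathcal{J},+)$ with $A\in\bigotimes_{i=1}^m p$.

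For the direction $(2)\Rightarrow(1)$, I invoke Theorem \ref{todo0} to obtain functions $\alpha:\G(^\N\N)\to\N$ and $H:\G(^\N\N)\to\G(\N)$ witnessing the finitary conclusion, and I claim that the same functions witness the infinitary one. Indeed, given an infinite chain $G_1\subsetneq G_2\subsetneq\cdots$, selections $f_i\in G_i$ for all $i\in\N$, and blocks $\beta_1<\cdots<\beta_m$ in $\G(\N)$, I set $n:=\max\beta_m$. Since $\beta_1<\cdots<\beta_m$, one has $n\ge m$ and $\beta_j\subseteq\{1,\ldots,n\}$ for every $j\le m$. Applying the finitary conclusion of Theorem \ref{todo0} to the truncated chain $G_1\subsetneq\cdots\subsetneq G_n$ together with the selections $f_1,\ldots,f_n$ and the blocks $\beta_1,\ldots,\beta_m$ yields precisely the $m$-tuple that is required to lie in $A$.

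There is no real obstacle beyond bookkeeping: the infinite-to-finite truncation works because each $\beta_j$ is finite, so only boundedly many coordinates of the infinite chain ever appear in the sums. The corollary is therefore a cosmetic strengthening of Theorem \ref{todo0}, with all of the genuine content of the proof absorbed into the argument for that theorem.
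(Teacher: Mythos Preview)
Your proposal is correct and matches the paper's own approach: the paper does not give a separate proof of Corollary \ref{todo1} but explains (just before Corollary \ref{yad} and in the ``Structure of the paper'' paragraph) that the infinitary version follows from Theorem \ref{todo0} via the observation that any $\beta_1<\cdots<\beta_m$ in $\G(\N)$ lies inside some $\{1,\ldots,n\}$, and that $(1)\Rightarrow(2)$ runs exactly as in Theorem \ref{todo0}. Your truncation/extension argument is precisely this bookkeeping.
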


Similarly to Theorem \ref{newaddmt}, the following theorem is a simultaneous extension of Theorem \ref{scst}, and Theorem \ref{newaddmt}.

\begin{thm}[Multidimensional Stronger Central Sets Theorem \MakeUppercase{\romannumeral 2}]\label{newmul1}
    Let $m\in \N$, and $A\subseteq \N^m$. Then the following two statements are equivalent.

  \begin{enumerate}
      \item There exists functions $\alpha_1,\ldots ,\alpha_m : \mathcal{P}_f(^\N\mathbb{N})\to \mathbb{N}$ and $H_1,\ldots ,H_m: \mathcal{P}_f(^\N\mathbb{N}) \to \mathcal{P}_f\left(\mathbb{N}\right)$ such that 
         \begin{enumerate}
             \item \label{1.41} if $F,G \in \mathcal{P}_f(\tau)$ and $F \subsetneq G$ then for every $i\in \{1,2,\ldots ,m\}$, $\max H_i(F) < \min H_i(G)$, and 
            \item \label{1.42} if $n (\ge m)\in\mathbb N$, $G_1\subsetneq G_2\subsetneq\dots\subsetneq G_n$, then for each $\beta_1<\beta_2<\dots<\beta_m\le\{n\},$ 
            in $\G(\N)$ we have 
            $$\left\lbrace \left( \sum_{i\in \beta_1}\left(\alpha_1(G_i)+\sum_{t\in H_1(G_i)}f_i(t)\right),\cdots ,\sum_{i\in \beta_m}\left(\alpha_m(G_i)+\sum_{t\in H_m(G_i)}f_i(t)\right):f_i\in G_i \text{ for every }i\in \N \right)\right\rbrace\subset A.$$
        \end{enumerate}
     \item There exist idempotents $p_1,\ldots ,p_m$ in $(\mathcal{J},+)$, such that $A\in \bigotimes_{i=1}^m p_i.$

\end{enumerate}  
    
\end{thm}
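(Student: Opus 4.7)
The plan is to adapt the proof of Theorem \ref{todo0} — which establishes the single-idempotent analogue — to the multi-idempotent setting. The only substantive change is that the Lifting Lemma, already formulated for arbitrary $p_1,\ldots,p_m$, produces coordinate-specific admissible sets belonging to the respective $p_j$, so the recursive construction must track $m$ pairs of selector functions $(\alpha_j,H_j)$ in parallel.

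For the direction $(2)\Rightarrow(1)$, suppose $A\in\bigotimes_{j=1}^m p_j$ with each $p_j$ an idempotent in $(\mathcal{J},+)$. First I would apply Lemma \ref{Lifting lemma} to obtain functions $D_j:\mathbb{N}^{j-1}\to\mathcal{P}(\mathbb{N})$ such that $D_j(w_1,\ldots,w_{j-1})\in p_j$ whenever $w_s\in D_s(w_1,\ldots,w_{s-1})$ for $s<j$, and such that $(w_1,\ldots,w_m)\in A$ whenever all these admissibility conditions hold. I would then enumerate $\mathcal{P}_f(^\mathbb{N}\mathbb{N})$ as $\langle F_n\rangle_n$ in an order respecting strict inclusion and construct $\alpha_j(F_n),H_j(F_n)$ by induction on $n$. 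At stage $n$ only finitely many histories need be considered, each consisting of a chain $F_{i_1}\subsetneq\cdots\subsetneq F_{i_{k-1}}\subsetneq F_n$, a distribution $\beta_1<\cdots<\beta_m$ of indices into coordinates with $n\in\beta_{j^\star}$ for some $j^\star$, and choices $f_i\in F_i$. Each such history specifies, for every coordinate $j$, a target set $D_j(\textup{current partial sums})\in p_j$ into which the new $j$-th contribution must fall. The finite intersection of these targets remains in $p_j$ and hence is a $J$-set, so applying Theorem \ref{scst} to each $p_j$ simultaneously allows a choice of $\alpha_j(F_n)$ and $H_j(F_n)$, subject to $\min H_j(F_n)>\max H_j(F)$ for $F\subsetneq F_n$, that keeps every relevant partial sum inside its prescribed admissible set.

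For the direction $(1)\Rightarrow(2)$, I would build the idempotents $p_j$ from the combinatorial data itself. For each $j$, consider the filter $\mathcal{F}_j$ on $\mathbb{N}$ generated, as $N$ varies and as $\langle G_i\rangle$ ranges over increasing chains, by the sets $\{\sum_{i\in\beta}(\alpha_j(G_i)+\sum_{t\in H_j(G_i)}f_i(t)):\beta\subset\{N,N+1,\ldots\},\,f_i\in G_i\}$; clause (b) ensures that $\mathcal{F}_j$ meets every $J$-set witnessing the hypothesis. A standard Zorn/Ellis argument applied to the closed subsemigroup $\overline{\mathcal{F}_j}\cap\overline{\mathcal{J}}$ of $(\beta\mathbb{N},+)$ then yields an idempotent $p_j\in\mathcal{J}$ containing $\mathcal{F}_j$. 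Verifying $A\in\bigotimes_{j=1}^m p_j$ reduces to unwinding the tensor-product definition layer-by-layer against clause (b), using that after fixing the first few coordinates from the respective $p_j$'s, the remaining slices of $A$ still satisfy the combinatorial property.

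The principal obstacle is the recursive bookkeeping in direction $(2)\Rightarrow(1)$: at each stage the selection of $\alpha_j(F_n), H_j(F_n)$ must be simultaneously compatible with every admissible chain extension and every admissible placement of $n$ among the $\beta_j$'s, and these constraints accumulate over the induction. The saving feature is that only finitely many constraints are active at any one stage, so finite intersections inside the idempotents $p_j$ remain $J$-sets and Theorem \ref{scst} supplies the required selection. The passage from a single idempotent $p$ to a tuple $p_1,\ldots,p_m$ introduces no fundamentally new difficulty beyond careful indexing, since the coordinate-wise split is built into the Lifting Lemma from the outset.
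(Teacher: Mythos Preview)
Your proposal is essentially correct and follows the same route as the paper. For $(1)\Rightarrow(2)$ the paper does exactly what you sketch: for each $j$ it forms the closed subsemigroup $Q(\alpha_j,H_j)=\bigcap_G\overline{T_G(\alpha_j,H_j)}$, picks $p_j\in K(Q(\alpha_j,H_j))\subseteq\mathcal{J}$, and verifies $A\in\bigotimes_j p_j$ by downward induction on the coordinate index. For $(2)\Rightarrow(1)$ the paper defers to Theorem~\ref{newmulpoly1} with $T=\{i\}$, whose proof is precisely the Lifting-Lemma-plus-induction scheme you describe.

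Two small points of phrasing. First, in the inductive construction the choices of $\alpha_j(K),H_j(K)$ for different $j$ are not made ``simultaneously'' but \emph{sequentially} in $j$ at each stage $K$: the target set for coordinate $j$ is a $D_j(\cdots)^\star$ whose arguments involve the just-chosen $\alpha_1(K),\ldots,\alpha_{j-1}(K)$, so the order matters. Second, your justification ``$\mathcal{F}_j$ meets every $J$-set'' is not the right formulation for $p_j\in\mathcal{J}$; what is needed (and what the paper asserts) is that $K(Q(\alpha_j,H_j))\subseteq\mathcal{J}$, i.e.\ every member of such a $p_j$ is itself a $J$-set, which follows from the fact that each $T_G(\alpha_j,H_j)$ already witnesses the conclusion of Theorem~\ref{scst}.
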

The main difference between Theorem \ref{todo0}, and Theorem \ref{newmul1} is that in Theorem \ref{todo0}, our functions $\alpha,$ and $H$ are same throughout all the coordinates, but in Theorem \ref{newmul1},  our functions $\alpha,$ and $H$ are different throughout all the coordinates. The main reason is that we used different idempotent ultrafilters.
In a similar fashion to Corollary \ref{todo1}, the following corollary addresses the infinitary version of Theorem \ref{newmul1}.

\begin{cor}\label{newmul2}
    Let $m\in \N$, and $A\subseteq \N^m$. Then the following two statements are equivalent.

  \begin{enumerate}
      \item There exists functions $\alpha_1,\ldots ,\alpha_m : \mathcal{P}_f(^\N\mathbb{N})\to \mathbb{N}$ and $H_1,H_2\ldots ,H_m: \mathcal{P}_f(^\N\mathbb{N}) \to \mathcal{P}_f\left(\mathbb{N}\right)$ such that 
         \begin{enumerate}
             \item \label{1.41} if $F,G \in \mathcal{P}_f(\tau)$ and $F \subsetneq G$ then $\max H(F) < \min H(G)$, and 
            \item \label{1.42} whenever $(G_i)_{i} \in \mathcal{P}_f(\tau)$, is a sequence of functions, i.e. $G_1 \subsetneq G_2 \subsetneq \cdots \subsetneq G_n\subsetneq \cdots$, then for every $\beta_1<\cdots<\beta_m$ in $\G(\N)$ we have 
            $$\left\lbrace \left( \sum_{i\in \beta_1}\left(\alpha_1(G_i)+\sum_{t\in H_1(G_i)}f_i(t)\right),\cdots ,\sum_{i\in \beta_m}\left(\alpha_m(G_i)+\sum_{t\in H_m(G_i)}f_i(t)\right):f_i\in G_i \text{ for every }i\in \N \right)\right\rbrace\subset A.$$
        \end{enumerate}
     \item There exist idempotents $p_1,\ldots ,p_m$ in $(\mathcal{J},+)$, such that $A\in \bigotimes_{i=1}^m p_i.$

\end{enumerate}  
    
\end{cor}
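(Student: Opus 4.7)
The strategy is to derive Corollary \ref{newmul2} directly from Theorem \ref{newmul1}, mirroring the way Corollary \ref{todo1} is obtained from Theorem \ref{todo0} and Corollary \ref{yad} is obtained from Theorem \ref{no1}. The key observation is that the functions $\alpha_i$ and $H_i$ manufactured in the proof of Theorem \ref{newmul1} come from the fixed idempotents $p_1,\ldots,p_m$ (via the Lifting Lemma \ref{Lifting lemma}) and are defined on the whole of $\G(^{\N}\N)$; they do not depend on the length of any particular chain $G_1\subsetneq\cdots\subsetneq G_n$ that one plugs in afterwards. This is exactly what allows the passage from the finitary version to the infinitary version.

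For the implication $(2)\Rightarrow(1)$, I would begin by feeding the idempotents $p_1,\ldots,p_m\in\mathcal{J}$ with $A\in\bigotimes_{i=1}^m p_i$ into Theorem \ref{newmul1} to obtain functions $\alpha_1,\ldots,\alpha_m:\G(^{\N}\N)\to\N$ and $H_1,\ldots,H_m:\G(^{\N}\N)\to\G(\N)$ satisfying the finitary clauses (a) and (b) there. To verify the infinitary clause (b) of the corollary, take any strictly increasing infinite chain $G_1\subsetneq G_2\subsetneq\cdots$ in $\G(^{\N}\N)$ and any $\beta_1<\cdots<\beta_m$ in $\G(\N)$, set $n=\max\beta_m$, and note that $n\ge m$ and $\beta_i\subseteq\{1,\ldots,n\}$ for every $i$. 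Applying the finitary clause (b) of Theorem \ref{newmul1} to the finite subchain $G_1\subsetneq\cdots\subsetneq G_n$ with these same $\beta_1,\ldots,\beta_m$ produces exactly the required $m$-tuple of sums in $A$. Clause (a) on the growth of $\max H_i(F)<\min H_i(G)$ is transported verbatim.

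For the converse $(1)\Rightarrow(2)$, the infinitary condition is formally stronger than the finitary one, so it suffices to show it implies clause (b) of Theorem \ref{newmul1}. Given any finite chain $G_1\subsetneq\cdots\subsetneq G_n$ with $n\ge m$, I would extend it to an infinite chain by picking, for each $k\ge n$, some function $h_{k+1}\notin G_k$ and setting $G_{k+1}=G_k\cup\{h_{k+1}\}$. The infinitary condition applied to this extension with any $\beta_1<\cdots<\beta_m$ satisfying $\beta_m\subseteq\{1,\ldots,n\}$ recovers the finitary clause (b); clause (a) is identical. Invoking Theorem \ref{newmul1} then produces the idempotents $p_1,\ldots,p_m\in\mathcal{J}$ with $A\in\bigotimes_{i=1}^m p_i$.

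Since the proof is a bookkeeping reduction to Theorem \ref{newmul1}, there is no serious obstacle. The only minor subtlety worth recording is that the very same functions $\alpha_i,H_i$ serve both the finitary and infinitary statements: they are produced once and for all from $p_1,\ldots,p_m$ on the entire domain $\G(^{\N}\N)$, so the distinction between finite and infinite chains $G_1\subsetneq G_2\subsetneq\cdots$ is swallowed by the remark that any $\beta\in\F$ lives in some initial segment $\{1,\ldots,n\}$.
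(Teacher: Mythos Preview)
Your proposal is correct and follows essentially the same approach the paper indicates in its ``Structure of the paper'' remarks: the paper explicitly says that $(2)\Rightarrow(1)$ in Corollary~\ref{newmul2} follows by lifting the inductive construction (exactly your observation that the functions $\alpha_i,H_i$ are defined once on all of $\G(^{\N}\N)$ and that any $\beta\in\F$ sits inside some $\{1,\dots,n\}$, echoing the remark before Corollary~\ref{yad}), and that $(1)\Rightarrow(2)$ is handled as in Theorem~\ref{newmul1}. The only cosmetic difference is that for $(1)\Rightarrow(2)$ you reduce to Theorem~\ref{newmul1} as a black box via the chain-extension trick, whereas the paper hints at rerunning the $T_G(\alpha_j,H_j)$/$Q(\alpha_j,H_j)$ argument directly; your reduction is cleaner and entirely adequate.
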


Our next result polynomialize Theorem \ref{todo0}. Unfortunately, we are unable to find the ultrafilter characterization of the next theorem. The main problem is that we don't know any ultrafilter characterization of the polynomial Central Sets Theorem. 

\begin{thm}[Multidimensional Stronger Polynomial Central Sets Theorem \MakeUppercase{\romannumeral 1}]\label{todo2}
     Let $m\in \N$, and let $p$ be a minimal idempotent in $(\beta\mathbb N,+)$. Then for every $A\in \bigotimes_{i=1}^m p$, and $T\in\mathcal P_f(P)$, we have two functions $\alpha:\mathcal P_f(^\N\N)\rightarrow \mathbb N$, and $H:\mathcal P_f(^\N\N)\rightarrow \mathcal P_f(\mathbb N)$ such that

 \begin{enumerate}
     \item if $G,K\in\mathcal P_f(^\N\N)$, and $G\subsetneq K$, then $H(G)< H(K)$, and

     \item if $n (\ge m)\in\mathbb N$, $G_1\subsetneq G_2\subsetneq\dots\subsetneq G_n$, then for each $\beta_1<\beta_2<\dots<\beta_m\le\{n\},$ in $\G(\N)$, $ f_i\in G_i$, and $P_1,\ldots ,P_m\in T$ we have 
     \[
     \left(\sum_{i\in\beta_1}  \alpha(G_i) + P_1\left(\sum_{i\in\beta_1} \sum_{t\in H(G_i)} f_i(t)\right),\ldots, \sum_{i\in\beta_m} \alpha (G_i)+P_m\left(\sum_{i\in\beta_m}\sum_{t\in H(G_i)}f_i(t)\right)\right) \in A.
     \]
 \end{enumerate}
 In fact we may consider $p\in E\left(\mathcal{J}_p,+\right).$
\end{thm}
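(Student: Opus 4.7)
The plan is to combine the Lifting Lemma (Lemma \ref{Lifting lemma}) with the polynomial $J_p$-set machinery underlying Corollary \ref{yad}, thereby reducing membership in the tensor product to a chain of one-dimensional polynomial central-set conditions. Taking $p\in E(\mathcal J_p,+)$ with $A\in\bigotimes_{i=1}^m p$ (as allowed by the final sentence of the statement), I apply Lemma \ref{Lifting lemma} to $A$ to produce functions $D_j:\mathbb N^{j-1}\to\mathcal P(\mathbb N)$, $1\le j\le m$, such that $D_j(w_1,\ldots,w_{j-1})\in p$ whenever $w_s\in D_s(w_1,\ldots,w_{s-1})$ for $s<j$, and $(w_1,\ldots,w_m)\in A$ whenever all $w_s\in D_s(w_1,\ldots,w_{s-1})$ for $s\le m$. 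This converts the multidimensional target $(V_1,\ldots,V_m)\in A$ into the chain of one-dimensional conditions $V_j\in D_j(V_1,\ldots,V_{j-1})$.

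Enumerate $\mathcal P_f({}^{\mathbb N}\mathbb N)=\{G_n\}_{n\in\mathbb N}$ so that $i<j$ whenever $G_i\subsetneq G_j$, and construct $\alpha(G_n)\in\mathbb N$ and $H(G_n)\in\mathcal P_f(\mathbb N)$ by induction on $n$, preserving the invariant that for every chain $G_{i_1}\subsetneq\cdots\subsetneq G_{i_l}$ with $i_l\le n$, every $\beta_1<\cdots<\beta_m\le\{1,\ldots,l\}$, every $f_{i_k}\in G_{i_k}$, and every $(P_1,\ldots,P_m)\in T^m$, the resulting tuple $(V_1,\ldots,V_m)$ lies in $A$. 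The key structural observation is that when $G_n$ sits at position $l$ in the chain and $l\in\beta_{j^\ast}$, one must have $j^\ast=m$: otherwise $\max\beta_{j^\ast}<\min\beta_{j^\ast+1}\le\max\beta_m\le l$ would contradict $l\in\beta_{j^\ast}$. Hence only the last coordinate $V_m$ is affected at stage $n$, while $V_1,\ldots,V_{m-1}$ are already fixed by older data and satisfy the required conditions by induction. The step thus reduces to choosing $a=\alpha(G_n),\ \beta=H(G_n)$, with $\min\beta>\max\bigcup_{i<n}H(G_i)$ to preserve property (1), such that
\[
s+a+P_m(u+z)\in D_m(V_1,\ldots,V_{m-1})=:B
\]
for each of the finitely many new configurations (old partial sums $s,u$ ranging over the data, $P_m\in T$, $z=\sum_{t\in\beta}f(t)$, and $f\in G_n$ arbitrary).

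To handle these one-dimensional polynomial constraints, expand $P_m(u+z)=P_m(u)+\widetilde P_m^{u}(z)$ with $\widetilde P_m^{u}\in\mathbb P$, rewriting the constraint as $a+\widetilde P_m^{u}(z)\in -(s+P_m(u))+B$. By pre-selecting each earlier $\alpha(G_i)$ from sufficiently deeply iterated star-sets $B^\star=\{x\in B:-x+B\in p\}$ (taken with respect to all the target sets $B$ that will be encountered later), every shift $s+P_m(u)$ arising at stage $n$ keeps its shifted target in $p$; intersecting the finitely many such targets yields $B_n\in p$, which is a $J_p$-set since $p\in\mathcal J_p$. Applying the polynomial $J_p$-set property to $B_n$ with the finite polynomial family $\{\widetilde P_m^{u}\}\subset\mathbb P$ and the finite function set $G_n$ then delivers the desired $a$ and $\beta$, closing the induction.

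The main obstacle is the bookkeeping needed to absorb the non-additive constant shifts $P_m(u)$ generated by the polynomial expansion: unlike the $J$-set case (Theorem \ref{todo0}), these integer shifts are not automatically absorbed by the idempotent $p$, so one has to arrange the inductive star-set nesting a priori deep enough that every shift which can eventually appear keeps the relevant target inside $p$. This is the technical heart of the argument; once it is in place, the resulting $\alpha$ and $H$ satisfy both conditions (1) and (2).
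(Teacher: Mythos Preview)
Your overall strategy coincides with the paper's: apply the Lifting Lemma to reduce $A\in\bigotimes_{i=1}^m p$ to a chain of one-dimensional targets $D_j(\cdot)\in p$, then build $\alpha,H$ inductively using the $J_p$-property together with the polynomial shift $P(u+z)=P(u)+Q_{P,u}(z)$ and star-sets to absorb the accumulated constants. Two points, however, need repair.

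First, the enumeration $\mathcal P_f({}^{\mathbb N}\mathbb N)=\{G_n\}_{n\in\mathbb N}$ is impossible: ${}^{\mathbb N}\mathbb N$ has cardinality $2^{\aleph_0}$, so $\mathcal P_f({}^{\mathbb N}\mathbb N)$ is uncountable and cannot be listed by $\mathbb N$. The paper instead inducts along the well-founded partial order $\subsetneq$: at stage $K$ one assumes $\alpha(G),H(G)$ defined for every nonempty proper $G\subsetneq K$ (finitely many, since $K$ is finite) and then constructs $\alpha(K),H(K)$; no global linear enumeration is ever used. Second, and more substantively, the invariant you carry, namely ``$(V_1,\dots,V_m)\in A$'', is too weak to close the induction, and the remedy you sketch (``pre-selecting each earlier $\alpha(G_i)$ from sufficiently deeply iterated star-sets taken with respect to all target sets $B$ that will be encountered later'') is not well-posed: the later targets $B=D_m(V_1,\dots,V_{m-1})$ depend on $V_1,\dots,V_{m-1}$, which in turn involve values $\alpha(G_{i'}),H(G_{i'})$ with $i'>i$ not yet defined when $\alpha(G_i)$ is being chosen. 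The paper resolves this by \emph{strengthening the inductive hypothesis itself}: for every $j\in\{1,\dots,m\}$ and every admissible $\beta_1<\dots<\beta_j$, it requires $V_j\in D_j(V_1,\dots,V_{j-1})^{\star}$, not merely that the full $m$-tuple lie in $A$. With this in hand, the ``old'' partial expression $s+P_m(u)$ is exactly the $V_m$ attached to the shorter configuration with last block $\beta_m\setminus\{l\}$, hence already lies in $D_m(V_1,\dots,V_{m-1})^{\star}$ by induction, and $-(s+P_m(u))+B\in p$ holds with no forward reference. Your observation that the newly defined value can only enter the \emph{last} block is correct and is precisely what makes the step go through, but it has to be applied for each $j\le m$ (with the $j$-tuple and blocks $\beta_1<\dots<\beta_j$), since all of these star-set conditions must be propagated.
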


The following corollary of the above theorem gives an infinitary version of Theorem \ref{todo2}. This result is the polynomial extension of Corollary \ref{todo1}.
\begin{cor}\label{todo3}
     Let $m\in \N$, and let $p$ be a minimal idempotent in $(\beta\mathbb N,+)$. Then for every $A\in \bigotimes_{i=1}^m p$, and $T\in\mathcal P_f(P)$, we have two functions $\alpha:\mathcal P_f(^\N\N)\rightarrow \mathbb N$, and $H:\mathcal P_f(^\N\N)\rightarrow \mathcal P_f(\mathbb N)$ such that

 \begin{enumerate}
     \item if $G,K\in\mathcal P_f(^\N\N)$, and $G\subsetneq K$, then $H(G)< H(K)$, and

     \item if $G_1\subsetneq G_2\subsetneq\dots\subsetneq G_n\subsetneq\cdots$ in $P_f(^\N\N)$, then for each $\beta_1<\beta_2<\dots<\beta_m,$ in $\G(\N)$,  $ f_i\in G_i$, and $P_1,\ldots ,P_m\in T$ we have 
     \[
     \left(\sum_{i\in\beta_1}  \alpha(G_i) + P_1\left(\sum_{i\in\beta_1} \sum_{t\in H(G_i)} f_i(t)\right),\ldots, \sum_{i\in\beta_m} \alpha (G_i)+P_m\left(\sum_{i\in\beta_m}\sum_{t\in H(G_i)}f_i(t)\right)\right) \in A.
     \]
 \end{enumerate}
 In fact we may consider $p\in E\left(\mathcal{J}_p,+\right).$
\end{cor}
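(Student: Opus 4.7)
The plan is to derive Corollary \ref{todo3} directly from Theorem \ref{todo2} via the elementary observation that any finite collection of indices lies inside an initial segment of the infinite chain.

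First, I would apply Theorem \ref{todo2} to the given data $m$, $A$, $T$, and $p\in E(\mathcal{J}_p,+)$ to obtain functions $\alpha:\mathcal P_f(^\N\N)\rightarrow \mathbb N$ and $H:\mathcal P_f(^\N\N)\rightarrow \mathcal P_f(\mathbb N)$ satisfying the two conditions of that theorem. Crucially, these functions are defined on the entire domain $\mathcal{P}_f(^\N\N)$ and do not depend on any particular chain length $n$; only the conclusion of Theorem \ref{todo2}(2) involves a specific truncation. Condition (1) of Corollary \ref{todo3} is identical to condition (1) of Theorem \ref{todo2}, so it transfers without any further work.

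Next, to verify condition (2), I would consider an arbitrary infinite ascending chain $G_1\subsetneq G_2\subsetneq\cdots$ in $\mathcal{P}_f(^\N\N)$, arbitrary finite sets $\beta_1<\beta_2<\cdots<\beta_m$ in $\G(\N)$, arbitrary choices $f_i\in G_i$, and arbitrary polynomials $P_1,\ldots,P_m\in T$, and set $n:=\max\beta_m$. A short induction using $\min\beta_1\ge 1$ together with $\min\beta_{i+1}>\max\beta_i\ge\min\beta_i$ yields $\min\beta_i\ge i$ for each $i\le m$, hence $n\ge m$. The truncated chain $G_1\subsetneq G_2\subsetneq\cdots\subsetneq G_n$, together with the same $\beta_1<\cdots<\beta_m\le\{n\}$, the same $f_1,\ldots,f_n$, and the same $P_1,\ldots,P_m$, now satisfies the hypothesis of Theorem \ref{todo2}(2), whose conclusion is exactly the tuple required to lie in $A$.

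The essential point is that the inductive construction underlying Theorem \ref{todo2} produces $\alpha$ and $H$ uniformly on all of $\mathcal{P}_f(^\N\N)$, so passing to an infinite chain introduces nothing new. I do not anticipate any genuine obstacle here; this mirrors exactly the passage from Theorem \ref{no1} to Corollary \ref{yad} recorded in \cite{ejc}, and the argument is fundamentally notational.
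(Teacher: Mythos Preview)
Your proposal is correct and matches the paper's approach: the paper does not give a separate proof of Corollary \ref{todo3} but simply remarks that the inductive construction in Theorem \ref{todo2} can be ``lifted'' to the infinitary statement, exactly as in the passage from Theorem \ref{no1} to Corollary \ref{yad}. Your observation that any $\beta_1<\cdots<\beta_m$ sits inside $\{1,\dots,n\}$ with $n=\max\beta_m\ge m$ is precisely the mechanism the paper has in mind.
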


The following theorem is the polynomial extension of Theorem \ref{newmul1}. Here we considered tensor product of different minimal idempotent ultrafilters.

\begin{thm}[Multidimensional Stronger Polynomial Central Sets Theorem  \MakeUppercase{\romannumeral 2}] \label{newmulpoly1}
     Let $m\in \N$, and let $p_1,\ldots ,p_m$ be  minimal idempotents in $(\beta\mathbb N,+)$. Then for every $A\in \bigotimes_{i=1}^m p_i$, and $T\in\mathcal P_f(P)$, we have functions $\alpha_1,\ldots ,\alpha_m:\mathcal P_f(^\N\N)\rightarrow \mathbb N$, and $H_,\ldots ,H_m:\mathcal P_f(^\N\N)\rightarrow \mathcal P_f(\mathbb N)$ such that

 \begin{enumerate}
     \item if $G,K\in\mathcal P_f(^\N\N)$, and $G\subsetneq K$, then for every $i\in \{1,\ldots ,m\}$, we have $H_i(G)< H_i(K)$, and

     \item if $n (\ge m)\in\mathbb N$, $G_1\subsetneq G_2\subsetneq\dots\subsetneq G_n$, then for each $\beta_1<\beta_2<\dots<\beta_m\le\{n\},$ in $\G(\N)$, $ f_i\in G_i$, and $P_1,\ldots ,P_m\in T$ we have 
     \[
     \left(\sum_{i\in\beta_1}  \alpha_1(G_i) + P_1\left(\sum_{i\in\beta_1} \sum_{t\in H_1(G_i)} f_i(t)\right),\ldots, \sum_{i\in\beta_m} \alpha_m (G_i)+P_m\left(\sum_{i\in\beta_m}\sum_{t\in H_m(G_i)}f_i(t)\right)\right) \in A.
     \]
 \end{enumerate}
 In fact we may consider $p_1,\ldots ,p_m\in E\left(\mathcal{J}_p,+\right).$
\end{thm}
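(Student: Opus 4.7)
The plan is to parallel the proof of Theorem \ref{todo2}, substituting the single idempotent $p$ governing every coordinate by the $m$ distinct idempotents $p_1,\ldots,p_m\in E(\mathcal J_p,+)$, one per coordinate. Two ingredients are available: each individual $p_j$ belonging to $E(\mathcal J_p,+)$, so that every $B\in p_j$ is a $C_p$-set and Corollary \ref{yad} applies inside it; and the Lifting Lemma, which splits the tensor product membership $A\in\bigotimes_{i=1}^m p_i$ into a cascade of one-dimensional membership conditions.

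First, I would apply Lemma \ref{Lifting lemma} to $A$ and $p_1,\ldots,p_m$ to obtain functions $D_j:\N^{j-1}\to\mathcal P(\N)$ with $D_1\in p_1$, with $D_j(w_1,\ldots,w_{j-1})\in p_j$ whenever $w_s\in D_s(w_1,\ldots,w_{s-1})$ for every $s<j$, and with $(w_1,\ldots,w_m)\in A$ whenever all coordinates are compatible. This decomposes the $m$-dimensional membership into $m$ one-dimensional conditions, one per ultrafilter.

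Next, I would construct $\alpha_j$ and $H_j$ by recursion on $G\in\mathcal P_f(^\N\N)$, well-ordered in a way compatible with $\subsetneq$. At stage $G$, assuming $\alpha_j(G'),H_j(G')$ have been defined for all $G'$ appearing earlier in the well-order and all $j\in\{1,\ldots,m\}$, I define $\alpha_j(G),H_j(G)$ iteratively in $j=1,2,\ldots,m$. For each $j$, I collect the finite set of tuples $(w_1,\ldots,w_{j-1})$ that could arise from previously processed chains $G_1\subsetneq\ldots\subsetneq G$ with $f_i\in G_i$, $\beta_i\subseteq\{1,\ldots,n\}$ and $P_i\in T$; I intersect the corresponding sets $D_j(w_1,\ldots,w_{j-1})$, each of which belongs to $p_j$, to obtain a set still in $p_j$; I restrict it to indices beyond $\max_{G'}H_j(G')$; and I apply Corollary \ref{yad} to the resulting $C_p$-set with the polynomial family $T$ and the functions in $G$, extracting $\alpha_j(G)$ and $H_j(G)$ that force every admissible sum through $G$ into the intersection.

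The principal obstacle is the bookkeeping required to maintain coherence of the nested recursions. One must ensure at each sub-stage that only finitely many tuples $(w_1,\ldots,w_{j-1})$ arise, so that the intersection of the corresponding $D_j$'s is still in $p_j$; this follows from the finiteness of each $G_i$, of $T$, and of the collection of chains processed so far. One must also verify the monotonicity $\max H_j(F)<\min H_j(G)$ for $F\subsetneq G$, which can be arranged by restricting to a sufficiently late tail at each stage. Once the recursion is complete, unpacking the Lifting Lemma gives the conclusion: for any chain $G_1\subsetneq\ldots\subsetneq G_n$ with $n\ge m$, any $\beta_1<\ldots<\beta_m\le\{n\}$, any $f_i\in G_i$ and any $P_1,\ldots,P_m\in T$, the partial sum $w_j=\sum_{i\in\beta_j}\alpha_j(G_i)+P_j\bigl(\sum_{i\in\beta_j}\sum_{t\in H_j(G_i)}f_i(t)\bigr)$ lies in $D_j(w_1,\ldots,w_{j-1})$, so that $(w_1,\ldots,w_m)\in A$ as required.
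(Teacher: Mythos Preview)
Your high-level architecture—Lifting Lemma followed by a double recursion, outer on $K\in\mathcal P_f({}^\N\N)$ and inner on the coordinate index $j$—is exactly what the paper does. But the step where you say ``apply Corollary \ref{yad} to the resulting $C_p$-set \ldots\ extracting $\alpha_j(G)$ and $H_j(G)$'' does not work as written, and this is where the real content of the proof lies.

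The quantity you need to land in $D_j(w_1,\ldots,w_{j-1})$ is $\sum_{i\in\beta_j}\alpha_j(G_i)+P_j\bigl(\sum_{i\in\beta_j}\sum_{t\in H_j(G_i)}f_i(t)\bigr)$, where $\beta_j$ may contain the current index together with earlier ones. Since $P_j$ is nonlinear, you cannot separate the new contribution from the old one by a simple shift; Corollary~\ref{yad} applied as a black box would hand you an entire new pair of functions tailored to that one intersection set, incompatible with the values of $\alpha_j,H_j$ already fixed at earlier stages. What the paper (following the proof of Theorem~\ref{todo2}) actually does at each stage is: (i) pass to the star sets $D_j(\cdots)^\star$ so that for every previously constructed element $x\in M_j$ the translate $-x+D_j(\cdots)^\star$ still lies in $p_j$; (ii) enlarge the polynomial family to $S=T\cup\{Q_{P,d}:P\in T,\ d\in R\}$ with $Q_{P,d}(y)=P(y+d)-P(d)$, so that a cumulative polynomial value decomposes as $x+a+Q_{P,d}\bigl(\sum_{t\in\delta_j}f(t)\bigr)$ with $x$ an old element and $d$ an old inner sum; and (iii) apply the one-step $J_p$-set property (i.e.\ \cite[Lemma~10]{ejc}) to the finite intersection $E_{j-1}\cap G_j\cap\cdots\in p_j$ with the enlarged family $S$, not Corollary~\ref{yad}. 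Your sketch omits both the $\star$ mechanism and the shifted polynomials $Q_{P,d}$; without them the inductive invariant cannot be propagated.
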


The following corollary of the above theorem gives an infinitary version of Theorem \ref{newmulpoly1}. This result is the polynomial extension of Corollary \ref{newmul2}.
\begin{cor}\label{newmulpoly2}
     Let $m\in \N$, and let $p_1,\ldots ,p_m$ be  minimal idempotents in $(\beta\mathbb N,+)$. Then for every $A\in \bigotimes_{i=1}^m p_i$, and $T\in\mathcal P_f(P)$, we have functions $\alpha_1,\ldots ,\alpha_m:\mathcal P_f(^\N\N)\rightarrow \mathbb N$, and $H_1,\ldots ,H_m:\mathcal P_f(^\N\N)\rightarrow \mathcal P_f(\mathbb N)$ such that

 \begin{enumerate}
     \item if $G,K\in\mathcal P_f(^\N\N)$, and $G\subsetneq K$, then for every $i\in \{1,2,\ldots ,m\}$, we have $H_i(G)< H_i(K)$, and

     \item if $G_1\subsetneq G_2\subsetneq\dots\subsetneq G_n\subsetneq\cdots$ in $P_f(^\N\N)$, then for each $\beta_1<\beta_2<\dots<\beta_m,$ in $\G(\N)$,  $ f_i\in G_i$, and $P_1,\ldots ,P_m\in T$ we have 
     \[
     \left(\sum_{i\in\beta_1}  \alpha_1(G_i) + P_1\left(\sum_{i\in\beta_1} \sum_{t\in H_1(G_i)} f_i(t)\right),\ldots, \sum_{i\in\beta_m} \alpha_m (G_i)+P_m\left(\sum_{i\in\beta_m}\sum_{t\in H_m(G_i)}f_i(t)\right)\right) \in A.
     \]
 \end{enumerate}
 In fact we may consider $p_1,\ldots ,p_m\in E\left(\mathcal{J}_p,+\right).$
\end{cor}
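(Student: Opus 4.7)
The plan is to derive Corollary \ref{newmulpoly2} as an immediate consequence of Theorem \ref{newmulpoly1}, in exactly the same way that Corollary \ref{yad} is obtained from Theorem \ref{no1} and Corollary \ref{todo3} from Theorem \ref{todo2}. First, I would apply Theorem \ref{newmulpoly1} to the given data $m$, $A \in \bigotimes_{i=1}^m p_i$, $T \in \mathcal{P}_f(\mathbb{P})$, and the minimal idempotents $p_1,\ldots,p_m \in E(\mathcal{J}_p,+)$. This produces functions $\alpha_i : \mathcal{P}_f({}^{\mathbb{N}}\mathbb{N}) \to \mathbb{N}$ and $H_i : \mathcal{P}_f({}^{\mathbb{N}}\mathbb{N}) \to \mathcal{P}_f(\mathbb{N})$ for $i=1,\ldots,m$, defined globally on $\mathcal{P}_f({}^{\mathbb{N}}\mathbb{N})$ and already satisfying the monotonicity condition in clause (1) of the corollary.

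Next, I would verify clause (2) by a simple truncation argument. Given an infinite $\subsetneq$-chain $G_1 \subsetneq G_2 \subsetneq \cdots$ in $\mathcal{P}_f({}^{\mathbb{N}}\mathbb{N})$ and any $\beta_1 < \beta_2 < \cdots < \beta_m$ in $\mathcal{P}_f(\mathbb{N})$, I would set $n := \max \beta_m$. Since $\beta_1,\ldots,\beta_m$ are $m$ nonempty subsets of $\{1,\ldots,n\}$ satisfying $\max \beta_j < \min \beta_{j+1}$, we automatically have $n \ge m$ and $\beta_m \le \{n\}$ in the sense used in Theorem \ref{newmulpoly1}. The truncated finite chain $G_1 \subsetneq G_2 \subsetneq \cdots \subsetneq G_n$ then meets the hypotheses of clause (2) of Theorem \ref{newmulpoly1}, so for any choice of $f_i \in G_i$ and $P_1,\ldots,P_m \in T$, the tuple displayed in the corollary lies in $A$. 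This is exactly clause (2) of Corollary \ref{newmulpoly2}, and the refinement that $p_1,\ldots,p_m$ may be chosen in $E(\mathcal{J}_p,+)$ carries over verbatim from the theorem.

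The entire substance is already contained in Theorem \ref{newmulpoly1}. The only subtlety worth flagging is that the $\alpha_i, H_i$ produced by the proof of that theorem must be defined on the whole of $\mathcal{P}_f({}^{\mathbb{N}}\mathbb{N})$, not merely on the particular finite chain appearing in the statement. This globality is automatic in the standard inductive construction of such functions via a well-ordering of $\mathcal{P}_f({}^{\mathbb{N}}\mathbb{N})$, so I do not anticipate any genuine obstacle: the deduction is a bookkeeping step rather than a new argument, strictly parallel to how Corollary \ref{yad} follows from Theorem \ref{no1}.
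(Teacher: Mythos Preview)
Your proposal is correct and matches the paper's approach: the paper explicitly says that Corollary~\ref{newmulpoly2} follows by lifting the inductive construction of Theorem~\ref{newmulpoly1}, which amounts precisely to your observation that any $\beta_1<\cdots<\beta_m$ lies inside $\{1,\ldots,n\}$ for $n=\max\beta_m$, so the globally defined $\alpha_i,H_i$ from the theorem already witness the infinitary conclusion. Your remark about globality is well taken and accurately reflects how the inductive proof of Theorem~\ref{newmulpoly1} builds $\alpha_i,H_i$ on all of $\mathcal{P}_f({}^{\mathbb N}\mathbb N)$.
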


\subsubsection{Polynomial Extension of the Milliken-Taylor Theorem}

Let $m\in \N$, and let $h:\N^m\rightarrow \N$ be any polynomial having no constant term. Then from \cite[Theorem 3.2]{bhw}, it is clear that for every $p\in \beta \N$, $\tilde{h}\left(\bigotimes_{i=1}^m p\right)=h(p,\ldots ,p).$  The following  theorem is a corollary of \cite[Theorem 3.2]{bhw}, and is the polynomial extension of Theorem \ref{mt1}.

\begin{thm}\cite[Theorem 1.11.]{bhw}\label{mt2}
    Let  $m\in \N,$ $h:\N^m\rightarrow \N$ be any polynomial having no constant term, and let $A\subseteq \bigtimes_{i=1}^m \N.$  Then the 
following statements are equivalent.
\begin{enumerate}
    \item There is a sequence $\langle x_n\rangle_{n=1}^\infty$ in $\N$ such that
    $$\left\lbrace h\left(\sum_{t\in F_1}x_t,\sum_{t\in F_2}x_t,\ldots ,\sum_{t\in F_k}x_t\right):F_1<F_2<\cdots <F_k \right\rbrace \subseteq A.$$
    \item There is an idempotent $p\in \beta S$ such that $A \in \tilde{h}\left(\bigotimes_{i=1}^m p\right).$
\end{enumerate}

\end{thm}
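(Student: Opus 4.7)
The strategy is to pull the entire equivalence back along $h$ and apply the Milliken--Taylor Theorem~I (Theorem~\ref{mt1}) to the preimage. First I would set $B=h^{-1}(A)\subseteq \mathbb N^{m}$ and observe that, because $h(y_1,\ldots,y_m)\in A$ if and only if $(y_1,\ldots,y_m)\in B$, the combinatorial condition in (1) is literally identical to
\[
\left\{\left(\sum_{t\in F_1}x_t,\sum_{t\in F_2}x_t,\ldots ,\sum_{t\in F_m}x_t\right):F_1<F_2<\cdots <F_m\right\}\subseteq B.
\]
Thus (1) is just the ``combinatorial'' clause of Theorem~\ref{mt1} applied to the set $B$ rather than to $A$.

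For the ultrafilter side, I would invoke the general identity for continuous extensions: if $h:\mathbb N^{m}\to\mathbb N$ is any map and $\tilde h:\beta(\mathbb N^{m})\to\beta\mathbb N$ its Stone--\v{C}ech extension, then for every $q\in\beta(\mathbb N^{m})$ and every $A\subseteq\mathbb N$ we have $A\in\tilde h(q)$ iff $h^{-1}(A)\in q$. Taking $q=\bigotimes_{i=1}^{m}p$, this rewrites condition (2) as: there is an idempotent $p\in\beta\mathbb N$ with $B\in\bigotimes_{i=1}^{m}p$. That is exactly the ``ultrafilter'' clause of Theorem~\ref{mt1} for the set $B$. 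So the equivalence of (1) and (2) is obtained by a direct pullback of the equivalence already established in Theorem~\ref{mt1}, with $A$ replaced by $B=h^{-1}(A)$.

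The hypothesis that $h$ has no constant term enters only through the preliminary observation drawn from \cite[Theorem 3.2]{bhw}, which guarantees $\tilde h\bigl(\bigotimes_{i=1}^{m}p\bigr)=h(p,\ldots,p)$ and hence that the image ultrafilter actually lives in $\beta\mathbb N$ (rather than arising from a nontrivial additive shift). Since the proof is a two-line rewriting via the preimage and the standard description of members of $\tilde h(q)$, I do not foresee any genuine obstacle; the only point that requires care is to verify that the pullback of the combinatorial set on the right-hand side under $h$ is indeed $B$ in its entirety, which is immediate from the definitions. No new algebra of $\beta\mathbb N$ beyond what is already recorded in the excerpt is needed.
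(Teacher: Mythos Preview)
Your pullback argument is correct and is exactly the way the paper treats this statement: the paper does not give a separate proof but simply records the theorem as ``a corollary of \cite[Theorem 3.2]{bhw}, and \ldots\ the polynomial extension of Theorem~\ref{mt1}.'' Reducing both clauses to Theorem~\ref{mt1} applied to $B=h^{-1}(A)$, via the standard fact that $A\in\tilde h(q)\iff h^{-1}(A)\in q$, is precisely that corollary spelled out.

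One small remark: your explanation of where the ``no constant term'' hypothesis enters is not quite right. The pullback argument you wrote works verbatim for \emph{any} map $h:\mathbb N^m\to\mathbb N$; nothing in the equivalence $A\in\tilde h(q)\iff h^{-1}(A)\in q$ or in Theorem~\ref{mt1} uses the polynomial structure or the vanishing constant term. That hypothesis is relevant elsewhere (e.g.\ for the algebraic identity $\tilde h\bigl(\bigotimes_{i=1}^m p\bigr)=h(p,\ldots,p)$ from \cite[Theorem 3.2]{bhw}), but it plays no role in the proof of the equivalence itself. This does not affect the validity of your argument.
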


The following corollary immediately follows from \cite[Theorem 3.2]{bhw}, and Corollary \ref{todo1}, and can be thought of as the polynomial extension of Corollary \ref{todo1}.

\begin{cor}\label{todo4}
    Let  $m\in \N,$ $h:\N^m\rightarrow \N$ be any polynomial having no constant term, and let $A\subseteq \bigtimes_{i=1}^m \N.$  Then the 
following statements are equivalent. 

  \begin{enumerate}
      \item there exists functions $\alpha : \mathcal{P}_f(^\N\mathbb{N})\to \mathbb{N}$ and $H: \mathcal{P}_f(^\N\mathbb{N}) \to \mathcal{P}_f\left(\mathbb{N}\right)$ such that 
         \begin{enumerate}
             \item \label{1.41} if $F,G \in \mathcal{P}_f(\tau)$ and $F \subsetneq G$ then $\max H(F) < \min H(G)$, and 
            \item \label{1.42} whenever $(G_i)_{i} \in \mathcal{P}_f(\tau)$, is a sequence of functions, i.e. $G_1 \subsetneq G_2 \subsetneq \cdots \subsetneq G_n\subsetneq \cdots$, then for every $\beta_1<\cdots<\beta_m$ in $\G(\N)$ we have 
            $$\left\lbrace h\left( \sum_{i\in \beta_1}\left(\alpha(G_i)+\sum_{t\in H(G_i)}f_i(t)\right),\cdots ,\sum_{i\in \beta_m}\left(\alpha(G_i)+\sum_{t\in H(G_i)}f_i(t)\right):f_i\in G_i \text{ for every }i\in \N \right)\right\rbrace\in A.$$
        \end{enumerate}
     \item There exists an  idempotent $p$ in $(\mathcal{J},+)$, such that $A\in \tilde{h}\left(\bigotimes_{i=1}^m p\right).$

\end{enumerate}  
    
\end{cor}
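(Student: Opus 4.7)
My plan is to deduce Corollary \ref{todo4} directly from Corollary \ref{todo1} by means of the preimage identity $B \in \tilde{h}(q) \Longleftrightarrow h^{-1}[B] \in q$, which is the defining property of the continuous extension $\tilde{h} : \beta(\N^m) \to \beta\N$ recalled in Subsection 1.1. Applied to $q = \bigotimes_{i=1}^m p$, condition (2) of Corollary \ref{todo4}, namely $A \in \tilde{h}\bigl(\bigotimes_{i=1}^m p\bigr)$, is equivalent to $h^{-1}[A] \in \bigotimes_{i=1}^m p$, which is precisely the hypothesis of Corollary \ref{todo1} applied to the $m$-dimensional set $B := h^{-1}[A] \subseteq \N^m$. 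This single observation reduces the equivalence in Corollary \ref{todo4} to a translation of the equivalence already established in Corollary \ref{todo1}.

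For the direction (2) $\Rightarrow$ (1), I would start from an idempotent $p \in (\mathcal{J},+)$ with $A \in \tilde{h}\bigl(\bigotimes_{i=1}^m p\bigr)$; the preimage identity then yields $h^{-1}[A] \in \bigotimes_{i=1}^m p$. Invoking the (2) $\Rightarrow$ (1) direction of Corollary \ref{todo1} with $h^{-1}[A]$ in place of its set $A$ produces functions $\alpha$ and $H$ with the required monotonicity, and ensures that every admissible tuple
$$\left(\sum_{i \in \beta_1}\Bigl(\alpha(G_i) + \sum_{t \in H(G_i)} f_i(t)\Bigr),\, \ldots,\, \sum_{i \in \beta_m}\Bigl(\alpha(G_i) + \sum_{t \in H(G_i)} f_i(t)\Bigr)\right)$$
lies in $h^{-1}[A]$. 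Applying $h$ to each such tuple then yields exactly the membership condition in statement (1) of Corollary \ref{todo4}. For the converse (1) $\Rightarrow$ (2), the hypothesis says precisely that the $h$-image of every admissible tuple lies in $A$, i.e., every such tuple lies in $h^{-1}[A]$; invoking the (1) $\Rightarrow$ (2) direction of Corollary \ref{todo1} then yields an idempotent $p \in (\mathcal{J},+)$ with $h^{-1}[A] \in \bigotimes_{i=1}^m p$, and one more application of the preimage identity translates this back to $A \in \tilde{h}\bigl(\bigotimes_{i=1}^m p\bigr)$.

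There is no substantive obstacle here, since the argument is essentially a dictionary translation between a subset of $\N$ and its preimage in $\N^m$ under $h$. The one point worth verifying cleanly is that $\tilde{h}$ behaves as asserted on tensor-product ultrafilters; this follows from the universal property of $\beta$ together with the limit description $\bigotimes_{i=1}^m p = \lim_{s_1 \to p} \cdots \lim_{s_m \to p}(s_1, \ldots, s_m)$ recalled in Section 1.1.1, and is the content of \cite[Theorem 3.2]{bhw} that the excerpt already invokes. With that identity in hand, the two implications above complete the proof.
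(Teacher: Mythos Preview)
Your argument is correct and matches the paper's approach: the paper simply states that the corollary ``immediately follows from \cite[Theorem 3.2]{bhw} and Corollary \ref{todo1},'' and your proof spells out exactly that deduction via the preimage identity $A \in \tilde{h}(q) \Longleftrightarrow h^{-1}[A] \in q$, reducing both directions to Corollary \ref{todo1} applied to $h^{-1}[A]$.
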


In light of Corollary \ref{newmul2}, the following corollary is another version of Corollary \ref{todo4}.

\begin{cor}\label{todo5}
    Let  $m\in \N,$ $h:\N^m\rightarrow \N$ be any polynomial having no constant term, and let $A\subseteq \bigtimes_{i=1}^m \N.$  Then the 
following statements are equivalent. 

  \begin{enumerate}
      \item there exists functions $\alpha_1,\ldots ,\alpha_m : \mathcal{P}_f(^\N\mathbb{N})\to \mathbb{N}$ and $H_1,\ldots ,H_m: \mathcal{P}_f(^\N\mathbb{N}) \to \mathcal{P}_f\left(\mathbb{N}\right)$ such that 
         \begin{enumerate}
             \item \label{1.41} if $F,G \in \mathcal{P}_f(\tau)$ and $F \subsetneq G$ then for each $i\in \{1,2,\ldots ,m\}$, we have $\max H_i(F) < \min H_i(G)$, and 
            \item \label{1.42} whenever $(G_i)_{i} \in \mathcal{P}_f(\tau)$, is a sequence of functions, i.e. $G_1 \subsetneq G_2 \subsetneq \cdots \subsetneq G_n\subsetneq \cdots$, then for every $\beta_1<\cdots<\beta_m$ in $\G(\N)$ we have 
            $$\left\lbrace h\left( \sum_{i\in \beta_1}\left(\alpha_1(G_i)+\sum_{t\in H_1(G_i)}f_i(t)\right),\cdots ,\sum_{i\in \beta_m}\left(\alpha_m(G_i)+\sum_{t\in H_m(G_i)}f_i(t)\right):f_i\in G_i \text{ for every }i\in \N \right)\right\rbrace\in A.$$
        \end{enumerate}
     \item There exist  idempotents $p_1,\ldots ,p_m$ in $(\mathcal{J},+)$, such that $A\in \tilde{h}\left(\bigotimes_{i=1}^m p_i\right).$

\end{enumerate}  
\end{cor}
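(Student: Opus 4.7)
The plan is to derive Corollary \ref{todo5} as a more or less immediate consequence of Corollary \ref{newmul2} together with the Bergelson--Hindman--Williams identity \cite[Theorem 3.2]{bhw}, which tells us that for a polynomial $h:\N^m\to\N$ with no constant term and any ultrafilters $p_1,\ldots,p_m\in\beta\N$, one has $\tilde h(\bigotimes_{i=1}^m p_i)=h(p_1,\ldots,p_m)$ and, in particular,
\[
A\in \tilde h\left(\bigotimes_{i=1}^m p_i\right)\ \Longleftrightarrow\ h^{-1}(A)\in \bigotimes_{i=1}^m p_i.
\]
So the whole strategy is to transfer the problem from $A\subseteq \N$ to $h^{-1}(A)\subseteq \N^m$ and then invoke Corollary \ref{newmul2}.

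For the direction $(2)\Rightarrow(1)$, I would let $B=h^{-1}(A)\subseteq\N^m$. The hypothesis provides idempotents $p_1,\ldots,p_m\in E(\mathcal J,+)$ with $A\in\tilde h(\bigotimes p_i)$, which by the pull-back identity above is equivalent to $B\in\bigotimes_{i=1}^m p_i$. Applying Corollary \ref{newmul2} to $B$ yields functions $\alpha_1,\ldots,\alpha_m$ and $H_1,\ldots,H_m$ on $\mathcal P_f(^\N\N)$ with the stated $\max\!H_i(F)<\min\!H_i(G)$ property, and such that for every chain $G_1\subsetneq G_2\subsetneq\cdots$ in $\mathcal P_f(^\N\N)$, every $\beta_1<\cdots<\beta_m$ in $\G(\N)$, and every choice $f_i\in G_i$, the tuple
\[
\left(\sum_{i\in\beta_1}\!\bigl(\alpha_1(G_i)+\!\!\sum_{t\in H_1(G_i)}\!\!f_i(t)\bigr),\,\ldots,\,\sum_{i\in\beta_m}\!\bigl(\alpha_m(G_i)+\!\!\sum_{t\in H_m(G_i)}\!\!f_i(t)\bigr)\right)
\]
belongs to $B$. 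Applying $h$ to this tuple places its image in $A$, which is exactly condition (1).

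For the converse $(1)\Rightarrow(2)$, the given combinatorial data $\alpha_i, H_i$ produce tuples in $\N^m$ whose image under $h$ lies in $A$; equivalently, every such tuple lies in $h^{-1}(A)$. Thus $B:=h^{-1}(A)$ satisfies condition (1) of Corollary \ref{newmul2}, so by that corollary there exist idempotents $p_1,\ldots,p_m\in E(\mathcal J,+)$ with $B\in\bigotimes_{i=1}^m p_i$. Translating back via the Bergelson--Hindman--Williams identity, $A\in\tilde h(\bigotimes_{i=1}^m p_i)$, giving (2).

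There is no serious obstacle here: both halves run by pulling back $A$ through $h$ and appealing to Corollary \ref{newmul2}. The only small subtlety to flag is that \cite[Theorem 3.2]{bhw} applies because $h$ has no constant term (so that $\tilde h$ genuinely factors through the tensor product and preserves the idempotency we need in $(\mathcal J,+)$); once this is noted, the equivalence reads as a direct translation between the combinatorial statement in $\N$ and the ultrafilter statement in $\beta(\N^m)$.
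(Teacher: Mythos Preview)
Your proposal is correct and matches the paper's own approach: the corollary is introduced there as following ``in light of Corollary \ref{newmul2}'' in the same way that Corollary \ref{todo4} follows from Corollary \ref{todo1} together with \cite[Theorem~3.2]{bhw}. Pulling back $A$ to $h^{-1}(A)$ and invoking Corollary \ref{newmul2} in both directions is exactly the intended argument.
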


In the above theorems, we did not consider any polynomial inside $h.$ This inspires us the following generalization of the above results.
\begin{cor}\label{todo5}
     Let $m\in \N$, $h:\N^m\rightarrow \N$ be any polynomial having no constant term, and let $p_1,\ldots ,p_m$ be minimal idempotents in $(\beta\mathbb N,+)$. Then for every $A\in \tilde{h}\left(\bigotimes_{i=1}^m p_i\right)$, and $T\in\mathcal P_f(P)$, we have two functions $\alpha_1,\ldots ,\alpha_m:\mathcal P_f(^\N\N)\rightarrow \mathbb N$, and $H_1,\ldots ,H_m:\mathcal P_f(^\N\N)\rightarrow \mathcal P_f(\mathbb N)$ such that

 \begin{enumerate}
     \item if $G,K\in\mathcal{P}_f(^\N\N)$, and $G\subsetneq K$, then for each $i\in \{1,\ldots ,m\}$, we have  $H_i(G)< H_i(K)$, and

     \item if $G_1\subsetneq G_2\subsetneq\dots\subsetneq G_n\subsetneq\cdots$ in $P_f(^\N\N)$, then for each $\beta_1<\beta_2<\dots<\beta_m,$ in $\G(\N)$,  $ f_i\in G_i$, and $P_1,\ldots ,P_m\in T$ we have 
     \[
     h\left(\sum_{i\in\beta_1} \alpha_1(G_i) + P_1\left(\sum_{i\in\beta_1} \sum_{t\in H_1(a_i)} f_i(t)\right),\ldots, \sum_{i\in\beta_m} \alpha_m (G_i)+P_m\left(\sum_{i\in\beta_m}\sum_{t\in H_m(G_i)}f_i(t)\right)\right) \in A.
     \]
     \item if for each $i\in \{1,\ldots ,m\}$, $p_i=p$ for some minimal idempotent $p$, then we can consider $\alpha_i=\alpha$, and $H_i=H$ for some functions $\alpha :\mathcal P_f(^\N\N)\rightarrow \mathbb N$, and $H:\mathcal P_f(^\N\N)\rightarrow \mathcal P_f(\mathbb N)$.
 \end{enumerate}
 In fact we may consider $p\in E\left(\mathcal{J}_p,+\right).$
\end{cor}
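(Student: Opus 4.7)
The plan is to derive this corollary as a direct consequence of Corollary \ref{newmulpoly2} (and Corollary \ref{todo3} for the single-idempotent case), using the defining property of the continuous extension $\tilde h$ to translate membership in $\tilde h(\bigotimes_{i=1}^m p_i)$ into membership of a pullback set inside $\bigotimes_{i=1}^m p_i$. Concretely, for the continuous extension $\tilde h:\beta(\N^m)\to\beta\N$ of the polynomial $h:\N^m\to\N$, the definition of $\tilde h$ yields
\[
A\in\tilde h\!\left(\bigotimes_{i=1}^m p_i\right)\iff h^{-1}[A]\in\bigotimes_{i=1}^m p_i,
\]
where $h^{-1}[A]=\{(x_1,\ldots,x_m)\in\N^m:h(x_1,\ldots,x_m)\in A\}$. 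Thus the hypothesis on $A$ becomes $h^{-1}[A]\in\bigotimes_{i=1}^m p_i$.

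Next, I apply Corollary \ref{newmulpoly2} to $h^{-1}[A]$ together with the given finite family $T\in\G(\mathbb{P})$. Since every minimal idempotent of $(\beta\N,+)$ lies in $K(\beta\N,+)\subseteq\mathcal{J}_p$, the idempotents $p_1,\ldots,p_m$ automatically lie in $E(\mathcal{J}_p,+)$, so the hypotheses of Corollary \ref{newmulpoly2} are met. That corollary yields functions $\alpha_1,\ldots,\alpha_m$ and $H_1,\ldots,H_m$ satisfying condition (1), and produces, for every increasing chain $G_1\subsetneq G_2\subsetneq\cdots$ in $\G({}^\N\N)$, every $\beta_1<\cdots<\beta_m$ in $\G(\N)$, every choice of $f_i\in G_i$, and every $P_1,\ldots,P_m\in T$, the tuple
\[
\left(\sum_{i\in\beta_1}\alpha_1(G_i)+P_1\bigl(\sum_{i\in\beta_1}\sum_{t\in H_1(G_i)}f_i(t)\bigr),\ldots,\sum_{i\in\beta_m}\alpha_m(G_i)+P_m\bigl(\sum_{i\in\beta_m}\sum_{t\in H_m(G_i)}f_i(t)\bigr)\right)
\]
as an element of $h^{-1}[A]$. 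Applying $h$ to this tuple then places the value in $A$, which is exactly conclusion (2).

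For part (3), the same scheme is run with Corollary \ref{todo3} in place of Corollary \ref{newmulpoly2}. Under the assumption $p_1=\cdots=p_m=p$, that corollary furnishes a single pair $(\alpha,H)$ valid in all coordinates, so we may set $\alpha_i=\alpha$ and $H_i=H$ for each $i$. The strengthening to $p_1,\ldots,p_m\in E(\mathcal{J}_p,+)$ stated in the closing remark needs no extra argument, since both invoked corollaries are already phrased at that level of generality. In short, there is no genuine obstacle here: the combinatorial heavy lifting has been absorbed into the multidimensional polynomial central sets theorems, and $h$ plays only the role of a pull-back device converting set membership under $\tilde h$ into set membership in the tensor product of idempotents.
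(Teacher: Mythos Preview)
Your proof is correct and follows essentially the same route as the paper: pull back $A$ to $h^{-1}[A]\in\bigotimes_{i=1}^m p_i$, invoke Corollary~\ref{newmulpoly2} (or Corollary~\ref{todo3} when all $p_i$ coincide), and push forward through $h$. The paper phrases the pullback step as an appeal to \cite[Theorem~3.2]{bhw}, whereas you use the defining property of $\tilde h$ directly, but the argument is the same.
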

\begin{proof}
    Immediately follows from \cite[Theorem 3.2]{bhw}, and Corollaries \ref{todo3}, \ref{newmulpoly2}.
\end{proof}

\subsubsection*{Structure of the paper:} In the next section, we prove Theorem \ref{todo0}. Let $i:\N\rightarrow \N$ be a polynomial defined by $i(x)=x$ for all $x\in \N.$ Then if we choose $T=\{i\}$ in Theorem \ref{todo2}, then a similar proof of Theorem \ref{todo2} implies $(2)\implies (1)$ in Theorem \ref{todo0}. Hence we need to prove the reverse direction. So our first focus will be to prove $(1)\implies (2)$ in Theorem \ref{todo0}.  Similarly, in light of Theorem \ref{newmulpoly1}, we need to prove Theorem \ref{newmul1}, we need to show only $(1)\implies (2).$  
Then we prove Theorem \ref{todo2} and its another version Theorem \ref{newmulpoly1}. In the proof of Theorem \ref{todo2}, and Theorem \ref{newmulpoly1}, we used the induction hypothesis. One can easily lift this induction hypothesis (due to the presence of idempotent ultrafilters) to conclude the Corollaries $(2)\implies (1)$ in \ref{todo1},  $(2)\implies (1)$ in \ref{newmul2}, \ref{todo3}, and \ref{newmulpoly2}. However the proofs of Corollaries $(1)\implies (2)$ in \ref{todo1},  $(1)\implies (2)$ in \ref{newmul2} are similar to the proof of  $(1)\implies (2)$ in Theorem \ref{todo0}, and $(1)\implies (2)$ in Theorem \ref{newmul1}.  Then in the last section we address the polynomial extension of separating the Milliken-Taylor theorem, originally proved in \cite{d}.

\section{Our Proofs}

As we promised before, in this section we will prove Theorem \ref{todo0}, and Theorem \ref{todo2}. Then we draw several new results using these theorems.

\begin{proof}[{\bf Proof of Theorem \ref{todo0}:}]
    As mentioned earlier we are going to proof only $(1) \implies (2).$\\
     Let $\alpha:\mathbb P_f(^\N\N)\rightarrow\mathbb N$ and $H:\mathbb P_f(^\N\N)\rightarrow\mathbb P_f(\mathbb N)$ be two functions such that $\max H(F)<\min  H(G)$ provided $F,G\in\mathbb P_f(^\N\N)$ and $F\subsetneq G$. For $G\in\mathbb P_f(^\N\N)$, let $$T_G(\alpha, H)=\left\lbrace\sum_{i\in \beta}\left(\alpha(G_i)+\sum_{t\in H(G_i)}f_i(t)\right):n\in\mathbb N,\,\beta\subseteq [1,n], G\subsetneq G_1\subsetneq G_2\cdots\subsetneq G_n, \text{ and for each }i\in \{1,2,\ldots n\},f_i\in G_i\right\rbrace.$$
When $\alpha, H$ are fixed we write it $T_G$ instead of $T_G(\alpha, H).$

 Now for $\alpha:\mathbb P_f(^\N\N)\rightarrow\mathbb N$ and $H:\mathbb P_f(\mathbb N)\rightarrow\mathbb P_f(\mathbb N)$ with $\max H(F)<\min H(G)$ provided $F,G\in\mathbb P_f(^\N\N)$ and $F\subsetneq G$; define \[
 Q(\alpha, H)=\bigcap_{G\in \mathbb P_f(\mathbb N_{\mathbb N})}\overline{T}_G.
 \]
 It is easy to verify that $Q(\alpha,H)\neq\emptyset$ and is a subgroup of $(\beta\mathbb N,+)$. Furthermore, 
 $ K(Q(\alpha,H))\subseteq (\mathcal{J},+).$
We show, by downward induction on $l\in\{1,2,\ldots,m\}$ that for each $G\in\mathbb P_f(^\N\N)$,
     \begin{multline*}
    \left\{ \sum_{i\in\beta_l}(\alpha(G_i)+\sum_{t\in H(G_i)}f_i(t)), \sum_{i\in\beta_{l+1}}(\alpha(G_i)+\sum_{t\in H(G_i)}f_i(t)),\ldots,\sum_{i\in\beta_m}(\alpha(G_i)+\sum_{t\in H(G_i)}f_i(t)):\right.\\
    \left. f_i\in G_i,G\subseteq G_i\text{ for every }i\in\mathbb N\text{ and }\beta_l<\beta_{l+1}<\cdots<\beta_m \right\}\in \bigotimes_{i=l}^mp.
    \end{multline*}

   Let $p\in K\left(Q\left(\alpha ,H\right)\right).$ For $l=m$, we have $T_G\in p$ so that,
    \[
    \left\{\sum_{i\in\beta_m}(\alpha(G_i)+\sum_{t\in H(G_i)}f_i(t)): f_i\in G_i, G\subseteq G_i\text{ for every }i\in\mathbb N\right\}\subset \bigotimes_{i=m}^mp.
    \]
Therefore, let $l\in\{1,2,\ldots,m-1\}$ and assume that the statement is true for $l+1$. Now let, $G\in \mathbb P_f(^\N\N)$ and 
\begin{multline*}
    A=\left\{\left( \sum_{i\in\beta_l}(\alpha(G_i)+\sum_{t\in H(G_i)}f_i(t)), \sum_{i\in\beta_{l+1}}(\alpha(G_i)+\sum_{t\in H(G_i)}f_i(t)),\ldots,\sum_{i\in\beta_m}(\alpha(G_i)+\sum_{t\in H(G_i)}f_i(t))\right):\right.\\
    \left. f_i\in G_i,G\subseteq G_i\text{ for every }i\in\mathbb N\text{ and }\beta_l<\beta_{l+1}<\cdots<\beta_m \right\}.
    \end{multline*}

    We claim that
    \[
    T_G\subseteq \left\lbrace x\in\mathbb N:\left\lbrace (x_{l+1},x_{l+2},\ldots,x_m)\in\mathbb N^{m-l}:(x,x_{l+1},x_{l+2},\ldots,x_m)\in A\right\rbrace\in \bigotimes_{i=l+1}^mp\right\rbrace.
    \]
    To this end let $b\in T_G$. Pick $G_l\in\mathbb P_f(^\mathbb N{\mathbb N})$ with $G\subseteq G_l$ such that $b=\sum_{i\in\beta_l}\alpha(G_i)+\sum_{t\in H(G_i)}f_i(t)$. Let $F=\bigcup_{i\in\beta_l}G_i$. Then 
\begin{multline*}
    \left\{ \left( b,\sum_{i\in\beta_{l+1}}(\alpha(G_i)+\sum_{t\in H(G_i)}f_i(t)), \sum_{i\in\beta_{l+2}}(\alpha(G_i)+\sum_{t\in H(G_i)}f_i(t)),\ldots,\sum_{i\in\beta_m}(\alpha(G_i)+\sum_{t\in H(G_i)}f_i(t))\right):\right.\\
    \left. f_i\in G_i, F\subseteq G_i\text{ for every }i\in\mathbb N\text{ and }\beta_{l+1}<\beta_{l+2}<\cdots<\beta_m \right\}\subseteq A.
    \end{multline*}
So, $\{(x_{l+1},x_{l+2},\ldots,x_m)\in\mathbb N^{m-l}:(b,x_{l+1},x_{l+2},\ldots,x_m)\in A\}\in \bigotimes_{i=l+1}^mp$ as required.

\end{proof}

Now we are going to prove Theorem \ref{newmul1}.

\begin{proof}[{\bf Proof of Theorem \ref{newmul1}:}]
Similar to the previous discussion we need to prove only $(1)\implies (2).$ This proof is very similar to the above one. We need one technical change in the above proof:

 for each $j\in \{1,\ldots ,m\}$, let $\alpha_j:\mathbb P_f(^\N\N)\rightarrow\mathbb N$ and $H_j:\mathbb P_f(\mathbb N)\rightarrow\mathbb P_f(\mathbb N)$ with $\max H_j(F)<\min H_j(G)$ provided $F,G\in\mathbb P_f(^\N\N)$ and $F\subsetneq G$; define \[
 Q(\alpha_j, H_j)=\bigcap_{G\in \mathbb P_f(\mathbb N_{\mathbb N})}\overline{T_G(\alpha_j,H_j)}.
 \]
 Now proceed inductively as before. In the each step of the induction choose 
 $p_j\in  K(Q(\alpha_j,H_j))\subseteq (\mathcal{J},+).$
    
\end{proof}

Now we will prove Theorem \ref{todo2}, but before that let us recall one fact about ultrafilters from \cite{key-11}. We will apply this result. For any discrete semigroup $(S,\cdot )$, and  $p\in E\in \left(\beta S,\cdot \right)$, if $A\in p=p\cdot p$, then from \cite[Lemma 4.14]{key-11}, the set $A^\star =\{x\in A:-x+A\in p\}\in p.$

\begin{proof}[{\bf Proof of Theorem \ref{todo2}:}]
Let $p\in E (K(\beta\mathbb N,+))$, and $q= p\otimes p\otimes \dots \otimes p$, and let $A\in q$. By using Lifting lemma \ref{Lifting lemma}, pick for each $j\in \{1, \ldots, m\}$, some $D_j: \mathbb N^{j-1} \rightarrow \mathcal P(\mathbb N)$ such that 

\begin{enumerate}
    \item[($1$)] for each $j\in \{1,2,\ldots,m\}$, and for each $s\in\{1,2,\ldots,j-1\}, w_s\in D_s(w_1,w_2,\dots,w_{s-1})$, and then $D_j(w_1,\dots,w_{j-1})\in p$, and

    \item[($2$)] if for each $s\in\{1,2,\ldots,m\},w_s\in D_s(w_1,w_2,\ldots,w_{s-1})$, then $(w_1,w_2,\dots,w_m)\in A$.
\end{enumerate}

Now we proceed using the induction argument.\\

For the base case, let $K=\{f\}$. As $D_1(\emptyset)^\star\in p$, there exists $a\in\mathbb N$, and $\beta\in\mathcal P_f (\mathbb N)$ such that for all $P\in T, a+P\left(\sum_{t\in\beta}f(t)\right) \in D_1(\emptyset)^\star$. Define $\alpha(K)=a$, and $H(K)=\beta$. Now assume that $|K|> 1$, and that $\alpha(G)$, and $H(G)$ have been defined for all non-empty proper subsets $G$ of $K$ satisfying

\begin{enumerate}
    \item[($1$)] If $G, K\in\mathcal P_f(\mathbb N^{\mathbb N})$ and $G\subsetneq K$, then $H(G)<H(K)$.

    \item[($2$)] If $n\in\mathbb N$, $G_1\subsetneq G_2\subsetneq \ldots\subsetneq G_n\subsetneq K$, then for each $j\in\{1,2,\ldots,m\}$, and $\beta_1<\beta_2<\dots<\beta_j\le\{n\}$, $P_1,\ldots ,P_j\in T,$ and  
  \[
    \hspace{-7cm}\sum_{i\in\beta_j}\alpha(G_i)+P_j\left(\sum_{i\in\beta_j}\sum_{t\in H(G_i)}f_i(t)\right)\in
    \]
    \[
    D_j\left(\sum_{i\in\beta_1}\alpha(G_i)+P_1\left(\sum_{i\in\beta_1}\sum_{t\in H(G_i)}f_i(t)\right),\dots,\sum_{i\in\beta_{j-1}}\alpha(G_i)+P_{j-1}\left(\sum_{i\in\beta_{j-1}}\sum_{t\in H(G_i)}f_i(t)\right)\right)^\star.
\]
\end{enumerate}

Let, $R=\left\{\sum_{i\in\beta_j}\sum_{t\in H(G_i)}f_i(t):\beta_j\le\{n\}, f_i\in G_i\right\}$,
$F=\bigcup\{H(G):\emptyset\neq G\subsetneq K\}$, and let $m=\max F$ and, let for each $j\in \{1,2,\ldots,m\}$,
{\footnotesize
\[
M_j=\left\{\sum_{i\in\beta_j}\alpha(G_i)+P\left(\sum_{i\in\beta_j}\sum_{t\in H(G_i)}f_i(t)\right) :\emptyset\neq G_1\subsetneq\ldots\subsetneq G_n \subsetneq K,P\in T,
\text{
and for each } i\in\{1,\ldots,n\};f_i\in G_i\right\}.
\]
}
Then 
\begin{align*}
    &M_j\subseteq &\\
    &\bigcap_{\emptyset\neq G_1\subsetneq\ldots\subsetneq G_n \subsetneq K,P\in T} D_j\left(\sum_{i\in\beta_1}\alpha(G_i)+P_1\left(\sum_{i\in\beta_1}\sum_{t\in H(G_i)}f_i(t)\right),\dots,\sum_{i\in\beta_{j-1}}\alpha(G_i)+P_{j-1}\left(\sum_{i\in\beta_{j-1}}\sum_{t\in H(G_i)}f_i(t)\right)\right)^\star.&
\end{align*}


Let $P\in T, d\in R$, define the polynomial $Q_{p,d}$ as $Q_{p,d}(y)=P(y+d)-P(d)$. Clearly $Q_{p,d}\in\mathbb P$. Let $S=T\cup \{Q_{p,d}:P\in T \text{ and } d\in R\}$.

Define

\begin{multline*}
E=\bigcap^m_{j=1}\left\{D_j\left(\sum_{i\in\beta_1}\alpha(G_i)+P_1\left(\sum_{i\in\beta_1}\sum_{t\in H(G_i)}f_i(t)\right),\ldots,\sum_{i\in\beta_{j-1}}\alpha(G_i)+P_{j-1}\left(\sum_{i\in\beta_{j-1}}\sum_{t\in H(G_i)}\right)\right)^\star:\right.\\
\left.\emptyset\neq G_1\subsetneq\ldots\subsetneq G_n \subsetneq K, P1,\ldots ,P_{j-1}\in T, \text{ and for each } i\in\{1,\dots,n\},f_i\in G_i\right\}
\end{multline*}

$$G=\bigcap_{\underset{j=1,\dots,m}{ 
x\in M;}}-x+D_j\left(\sum_{i\in\beta_1}\alpha(G_i)+P_1\left(\sum_{i\in\beta_1}\sum_{t\in H(G_i)}f_i(t)\right),\dots,\sum_{i\in\beta_{j-1}}\alpha(G_i)+P_{j-1}\left(\sum_{i\in\beta_{j-1}}\sum_{t\in H(G_i)}f_i(t)\right)\right)^\star\in p.$$

Now by \cite[Lemma 10]{ejc}, pick $a\in\mathbb N$, and $\beta\in\mathcal P_f(\mathbb N)$ such that $\min\beta>m$, and for all $Q\in S$, and all $f\in K$, 
\[
a+Q\left(\sum_{t\in\beta}f(t)\right) \in G.
\]
Let $\alpha(K)=a$, and $H(K)=\beta$.
Since $\min\beta> m$, $(1)$ is satisfied.
To verify ($2$), let $n\in \mathbb N$, and let $G_1,G_2,\ldots,G_n\in\mathcal P_f(\mathbb N_{\mathbb N})$ with $G_1\subsetneq G_2\subsetneq\ldots\subsetneq G_n=K$, for each $i\in{1,\ldots,n}$, let $f_i\in G_i$, and let $P\in T$.
If $j=1$, and $\beta_1<\{n\}$, then this follows from induction hypothesis. If $j=1$, and $\beta_1=\{n\}$, then $G_n=K$.

Now for all $P\in T,\alpha(K)+P\left(\sum_{t\in H(k)}f_i(t)\right)\in G\subset D_1^\star(\emptyset)$.
Otherwise if,
$\beta_1=\gamma\cup\{n\}$, then we can rename $\gamma$ as $\beta_1$, and $\{n\}$ as $\beta_2$.
Now we can proceed as follows. Choose any $\beta_1<\beta_2<\dots<\beta_j<\{n\}$, where $2\leq j\leq m$. Then

\begin{multline*}
    \sum_{i\in\beta_j}\alpha(G_i)+P\left(\sum_{i\in\beta_j}\sum_{i\in H(G_i)}f_i(t)\right)=a+\sum_{i\in\beta_{j-1}}\alpha(G_i)+P\left(\sum_{i\in\beta_{j-1}}\sum_{t\in H(G_i)}f_i(t)\right) +\\
P\left(\sum_{t\in\beta}f_n(t)+\sum_{i\in\beta_{j-1}}\sum_{t\in H(G_i)}f_i(t)\right)-P\left(\sum_{t\in\beta}f_n(t)+\sum_{i\in\beta_{j-1}}\sum_{t\in H(G_i)}f_i(t)\right)\\
    =a+x+Q_{p,d}\left(\sum_{t\in\beta}f_n(t)\right)\in D_j\left(\sum_{i\in\beta_1}\alpha(G_i)+P\left(\sum_{t\in\beta_1}\sum_{t\in H(G_i)}f_i(t)\right),\ldots,\sum_{i\in\beta_{j-1}}\alpha(G_i)+\right.\\
\left.P\left(\sum_{i\in\beta_{j-1}}\sum_{t\in H(G_i)}f_i(t)\right)\right),
\end{multline*}
where, 
$x=\sum_{i\in\beta_{j-1}}\alpha(G_i)+P\left(\sum_{i\in\beta_{j-1}}\sum_{t\in H(G_i)}f_i(t)\right),d=\sum_{i\in\beta_{j-1}}\sum_{t\in H(G_i)}f_i(t)$.
\end{proof}

Now we are going to prove Theorem \ref{newmulpoly1}.

\begin{proof}[{\bf Proof of Theorem \ref{newmulpoly1}:}] The proof is similar to the proof of Theorem \ref{todo2}.\\

Let $p_1,\ldots ,p_m\in E (K(\beta\mathbb N,+))$, and $q= \bigotimes_{j=1}^m p_j$, and let $A\in q$. By using Lifting lemma \ref{Lifting lemma}, pick for each $j\in \{1, \ldots, m\}$, some $D_j: \mathbb N^{j-1} \rightarrow \mathcal P(\mathbb N)$ such that 

\begin{enumerate}
    \item[($1$)] for each $j\in \{1,2,\ldots,m\}$, and for each $s\in\{1,2,\ldots,j-1\}, w_s\in D_s(w_1,w_2,\dots,w_{s-1})$, and then $D_j(w_1,\dots,w_{j-1})\in p_j$, and

    \item[($2$)] if for each $s\in\{1,2,\ldots,m\},w_s\in D_s(w_1,w_2,\ldots,w_{s-1})$, then $(w_1,w_2,\dots,w_m)\in A$.
\end{enumerate}

Now we proceed using the induction argument.\\

For the base case, let $K=\{f\}$. As $D_1(\emptyset)^\star\in p_1$, there exists $a_1\in\mathbb N$, and $\gamma_1\in\mathcal P_f (\mathbb N)$ such that for all $P_1\in T, a_1+P_1\left(\sum_{t\in\gamma_1}f(t)\right) \in D_1(\emptyset)^\star$. Define $\alpha_1(K)=a_1$, and $H_1(K)=\gamma_1$. Now $\bigcap_{P_1\in T}D_2\left(a_1+P_1\left(\sum_{t\in\gamma_1}f(t)\right)\right)^\star \in p_2.$ Again for $K=\{f\}$, there exists $a_2\in\mathbb N$, and $\gamma_2\in\mathcal P_f (\mathbb N)$ such that for all $P_2\in T,$ we have $a_2+P_2\left(\sum_{t\in\gamma_2}f(t)\right) \in \bigcap_{P_1\in T}D_2\left(a_1+P_1\left(\sum_{t\in\gamma_1}f(t)\right)\right)^\star.$ Define $\alpha_2(K)=a_2$, and $H_2(K)=\gamma_2$. Inductively for each $j\in \{1,\ldots ,m\},$ we have 

\begin{enumerate}
    \item[($1$)]  $a_j\in \N$, and $\gamma_j\in \G(\N)$ such that for all $P_j\in T,$ $$a_j+P_j\left(\sum_{t\in\gamma_2}f(t)\right) \in \bigcap_{P_1,\ldots ,P_j\in T} D_j\left(a_1+P_1\left(\sum_{t\in\gamma_1}f(t)\right),\ldots ,a_{j-1}+P_{j-1}\left(\sum_{t\in\gamma_{j-1}}f(t)\right)\right)^\star\in p_j,$$ and 
    
    \item[($2$)]  $\alpha_j(K)=a_j$, and $H_j(K)=\gamma_j$.
    
\end{enumerate}

Now assume that $|K|> 1$, and that for each $j\in \{1,\ldots ,m\},$  $\alpha_j(G)$, and $H_j(G)$ have been defined for all non-empty proper subsets $G$ of $K$ satisfying

\begin{enumerate}
    \item[($1$)] If $G, K\in\mathcal P_f(\mathbb N^{\mathbb N})$ and $G\subsetneq K$, then $H(G)<H(K)$.

    \item[($2$)] If $n\in\mathbb N$, $G_1\subsetneq G_2\subsetneq \ldots\subsetneq G_n\subsetneq K$, then for each $j\in\{1,2,\ldots,m\}$, and $\beta_1<\beta_2<\dots<\beta_j\le\{n\}$, $P_1,\ldots ,P_j\in T,$ and  
  \[
    \hspace{-7cm}\sum_{i\in\beta_j}\alpha_j(G_i)+P_j\left(\sum_{i\in\beta_j}\sum_{t\in H_j(G_i)}f_i(t)\right)\in
    \]
    \[
    D_j\left(\sum_{i\in\beta_1}\alpha_1(G_i)+P_1\left(\sum_{i\in\beta_1}\sum_{t\in H_1(G_i)}f_i(t)\right),\dots,\sum_{i\in\beta_{j-1}}\alpha_{j-1}(G_i)+P_{j-1}\left(\sum_{i\in\beta_{j-1}}\sum_{t\in H_{j-1}(G_i)}f_i(t)\right)\right)^\star.
\]
\end{enumerate}

for each $j\in \{1,\ldots ,m\},$ let $R_j=\left\{\sum_{i\in\beta_j}\sum_{t\in H_j(G_i)}f_i(t):\beta_j\le\{n\}, f_i\in G_i\right\}$,
$F=\bigcup\{H_j(G):\emptyset\neq G\subsetneq K\text{ and }j\in \{1,\ldots ,m\}\}$. Let $M=\max F$ and, let for each $j\in \{1,2,\ldots,m\}$,
{\footnotesize
\[
M_j=\left\{\sum_{i\in\beta_j}\alpha_j(G_i)+P\left(\sum_{i\in\beta_j}\sum_{t\in H_j(G_i)}f_i(t)\right) :\emptyset\neq G_1\subsetneq\ldots\subsetneq G_n \subsetneq K,P\in T,
\text{
and for each } i\in\{1,\ldots,n\};f_i\in G_i\right\}.
\]
}
Then
\begin{align*}
    &M_j\subseteq &\\
    &\bigcap_{\emptyset\neq G_1\subsetneq\ldots\subsetneq G_n \subsetneq K,P\in T} D_j\left(\sum_{i\in\beta_1}\alpha_1(G_i)+P_1\left(\sum_{i\in\beta_1}\sum_{t\in H_1(G_i)}f_i(t)\right),\dots,\sum_{i\in\beta_{j-1}}\alpha_{j-1}(G_i)+P_{j-1}\left(\sum_{i\in\beta_{j-1}}\sum_{t\in H_{j-1}(G_i)}f_i(t)\right)\right)^\star.& 
\end{align*}

Define $R=\bigcup_jR_j.$ Let $P\in T, d\in R$, define the polynomial $Q_{p,d}$ as $Q_{p,d}(y)=P(y+d)-P(d)$. Clearly $Q_{p,d}\in\mathbb P$. Let $S=T\bigcup \{Q_{p,d}:P\in T \text{ and } d\in R\}$.

For each $j\in \{1,\ldots ,m\},$ define

\begin{multline*}
E_j=\left\{D_j\left(\sum_{i\in\beta_1}\alpha(G_i)+P_1\left(\sum_{i\in\beta_1}\sum_{t\in H(G_i)}f_i(t)\right),\ldots,\sum_{i\in\beta_{j-1}}\alpha(G_i)+P_{j-1}\left(\sum_{i\in\beta_{j-1}}\sum_{t\in H(G_i)}\right)\right)^\star:\right.\\
\left.\emptyset\neq G_1\subsetneq\ldots\subsetneq G_n \subsetneq K, P_1,\ldots ,P_{j-1}\in T, \text{ and for each } i\in\{1,\dots,n\},f_i\in G_i\right\}
\end{multline*}

$$G_j=\bigcap_{ 
x\in M_j}-x+D_j\left(\sum_{i\in\beta_1}\alpha(G_i)+P_1\left(\sum_{i\in\beta_1}\sum_{t\in H(G_i)}f_i(t)\right),\dots,\sum_{i\in\beta_{j-1}}\alpha(G_i)+P_{j-1}\left(\sum_{i\in\beta_{j-1}}\sum_{t\in H(G_i)}f_i(t)\right)\right)^\star\in p_j.$$
Now by \cite[Lemma 10]{ejc}, pick $b_1\in\mathbb N$, and $\delta_1\in\mathcal P_f(\mathbb N)$ such that $\min\delta_1>m$, and for all $Q\in S$, and all $f\in K$, 
\[
b_1+Q\left(\sum_{t\in\delta_1}f(t)\right) \in G_1.
\]
Let $\alpha_1(K)=b_1$, and $H_1(K)=\delta_1$.

Now $E_1\cap G_2\cap D_1\left(\left\{b_1+Q\left(\sum_{t\in\delta_1}f(t)\right):f\in K, Q\in S\right\}\right)^\star \in p_2.$
Now again by \cite[Lemma 10]{ejc}, pick $b_2\in\mathbb N$, and $\delta_2\in\mathcal P_f(\mathbb N)$ such that $\min\delta_2>m$, and for all $Q\in S$, and all $f\in K$, 
\[
b_2+Q\left(\sum_{t\in\delta_2}f(t)\right) \in E_1\bigcap G_2\bigcap D_1\left(\left\{b_1+Q\left(\sum_{t\in\delta_1}f(t)\right):f\in K, Q\in S\right\}\right)^\star.
\]
Define $\alpha_2(K)=b_2$, and $H_2(K)=\delta_2$. Now for each $j\in \{1,\ldots ,m\}$, inductively one can construct $\alpha_j(K)=b_j$, and $H_j(K)=\delta_j$ such that 

\[
b_j+Q\left(\sum_{t\in\delta_j}f(t)\right) \in  E_{j-1} \bigcap G_j\bigcap D_{j-1}\left(\left\{\left(b_1+Q\left(\sum_{t\in\delta_1}f(t)\right)\ldots ,  b_{j-1}+Q\left(\sum_{t\in\delta_{j-1}}f(t)\right) \right):f\in K, Q\in S\right\}\right)^\star.
\]
Now the verification of the induction hypothesis is similar to the last part of the proof of Theorem \ref{todo2}. So we omit the rest.

\end{proof}

Now we show that ultrafilters witnessing the conclusion of Theorem \ref{newmulpoly1} forms a left ideal of $\beta ((\N^m),+ )$ and $\beta ((\N^m),\cdot ).$ Unfortunately we can't prove the analogous result for Theorem \ref{todo2}. 
Define $$\mathcal{L}=\overline{\left\lbrace p\in \beta (\N^m):\forall A\in p, \,\, A \text{ satisfies the conclusion of Theorem  } \,\ref{newmulpoly1}\right\rbrace}.$$ 


The following corollary  follows from Theorem \ref{newmulpoly1}.
\begin{cor}
    $\mathcal{L}$ is a left ideal of $(\beta (\N^m),\cdot ),$ and  $(\beta (\N^m),+ ).$
\end{cor}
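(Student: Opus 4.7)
The plan is to handle the additive statement $\beta(\N^m)+\mathcal{L}\subseteq\mathcal{L}$; the multiplicative one is proved by the same argument with $+$ replaced by $\cdot$. Let $\mathcal{F}$ denote the class of subsets of $\N^m$ satisfying the conclusion of Theorem~\ref{newmulpoly1}, and let $\mathcal{L}_0:=\{p\in\beta(\N^m):p\subseteq\mathcal{F}\}$, so that $\mathcal{L}=\overline{\mathcal{L}_0}$. The first move is a double-continuity reduction. Since $(\beta(\N^m),+)$ is right topological, the map $\rho_p\colon q\mapsto q+p$ is continuous and $\rho_p^{-1}(\mathcal{L})$ is closed; by density of $\N^m$ in $\beta(\N^m)$, to prove $q+p\in\mathcal{L}$ for every $q\in\beta(\N^m)$ it suffices to prove it for every principal $q=a\in\N^m$. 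Moreover, for principal $a\in\N^m$ the map $\lambda_a\colon p\mapsto a+p$ is the Stone extension of the continuous map $x\mapsto a+x$ on $\N^m$ and is therefore continuous on $\beta(\N^m)$, giving $\lambda_a(\overline{\mathcal{L}_0})\subseteq\overline{\lambda_a(\mathcal{L}_0)}$. Thus the whole assertion collapses to the single claim that $a+p\in\mathcal{L}_0$ whenever $a\in\N^m$ and $p\in\mathcal{L}_0$.

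Next I would unpack this claim. If $A\in a+p$, then $-a+A\in p\subseteq\mathcal{F}$, so $-a+A$ comes with witness functions $\alpha_1,\dots,\alpha_m,H_1,\dots,H_m$ as in Theorem~\ref{newmulpoly1}, and the task is to construct witnesses $\alpha'_j, H'_j$ for $A$ itself. The structural point---and the reason this corollary follows from Theorem~\ref{newmulpoly1} but (as the authors themselves note) not from its common-idempotent cousin Theorem~\ref{todo2}---is that Theorem~\ref{newmulpoly1} permits the witnesses to depend on the coordinate $j$, so each entry of the shift $a=(a^{(1)},\dots,a^{(m)})$ can be absorbed independently. I would fix a marker chain $M_1\subsetneq\cdots\subsetneq M_m$ in $\G(^{\N}\N)$, set $H'_j(G):=H_j(G)$, and define $\alpha'_j$ to agree with $\alpha_j$ everywhere except on the prescribed marker input $M_j$, where $\alpha'_j(M_j):=\alpha_j(M_j)+a^{(j)}$. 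The hope is that every admissible chain and every block partition $\beta_1<\cdots<\beta_m$ will then pick up exactly one copy of $a^{(j)}$ in coordinate $j$, delivering the required tuple in $A=a+(-a+A)$.

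The hard part will be the combinatorial bookkeeping: the blocks $\beta_1<\cdots<\beta_m$ in Theorem~\ref{newmulpoly1} are arbitrary and pairwise disjoint, so no single index of a chain $G_1\subsetneq\cdots\subsetneq G_n$ is automatically in every $\beta_j$, and the naive marker placement does not force each $\beta_j$ to contain its own marker. To close this gap, I expect one must either restrict attention to chains that prepend the marker block $M_1\subsetneq\cdots\subsetneq M_m$ at the base (and verify that this restriction does not weaken the ``for all chains'' quantifier in the conclusion) or, equivalently, re-run the inductive construction of Theorem~\ref{newmulpoly1} for $A$ directly, exploiting the factorisation $a+\bigotimes_{i=1}^m p_i=\bigotimes_{i=1}^m(a^{(i)}+p_i)$ together with the fact that each $a^{(i)}+p_i$ remains in the minimal left ideal of $(\beta\N,+)$ containing $p_i$, so that the Lifting Lemma can be applied to it verbatim. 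The multiplicative case then proceeds identically, with $x\mapsto ax$ in place of $x\mapsto a+x$ throughout.
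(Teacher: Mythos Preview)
Your approach diverges from the paper's in a way that leaves a real gap. The paper argues the \emph{multiplicative} case, not the additive one, and does so without any marker construction or continuity reduction: given $B\in q\cdot p$ and $T\in\mathcal P_f(\mathbb P)$, it picks any $x=(x_1,\dots,x_m)\in\N^m$ with $x^{-1}B\in p$, applies the hypothesis on $p$ to $x^{-1}B$ with the \emph{modified} polynomial set $T'=\{x_j^{-1}P:P\in T,\ 1\le j\le m\}$, and then sets $\alpha'_j:=x_j\,\alpha_j$, $H'_j:=H_j$. Because multiplication by $x_j$ distributes across $\sum_{i\in\beta_j}\alpha_j(G_i)+x_j^{-1}P_j(\cdots)$, this yields exactly the required shape for $B$ with the original $T$, uniformly over all chains and all block partitions $\beta_1<\cdots<\beta_m$. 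The additive case is then declared ``easy'' and omitted.

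Your plan attacks the additive case first and tries to absorb each coordinate shift $a^{(j)}$ into $\alpha_j$ at a designated marker $M_j$. You already see the obstruction---an arbitrary $\beta_j$ need not contain any index with $G_i=M_j$---and neither of your proposed repairs closes it. Restricting to chains that prepend $M_1\subsetneq\cdots\subsetneq M_m$ genuinely weakens condition~(2) of Theorem~\ref{newmulpoly1}, which quantifies over \emph{all} chains; witnesses valid only on marked chains are not witnesses. And while the factorisation $a+\bigotimes_i p_i=\bigotimes_i(a^{(i)}+p_i)$ is correct, it does not let you re-run the proof of Theorem~\ref{newmulpoly1}: that proof relies essentially on the idempotence of each $p_i$ (via the sets $D_j(\cdots)^\star$), and $a^{(i)}+p_i$ is in the same minimal left ideal but is not idempotent, so the $\star$-machinery is unavailable. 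Finally, the claim that ``the multiplicative case then proceeds identically'' is not right: the two cases are not symmetric here, because there is no additive analogue of ``replace $P$ by $x_j^{-1}P$'' that remains in $\mathbb P$. You should handle the multiplicative case directly via the rescaling trick above; the additive case requires a separate argument, not a copy of the same one.
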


\begin{proof}
Let  $p\in \mathcal{L}$, and $q\in \beta (\N^k)$ and $B\in q\cdot p.$ Let $T\in\mathcal P_f(P)$ be given. Then there exists $(x_1,\ldots ,x_m)\in \N^m$ such that $A=(x_1,\ldots ,x_m)^{-1}B\in p.$ Consider the new set of polynomials $T'=\{\frac{1}{x}P:P\in T, \text{ and } x\in \{x_1,\ldots ,x_m\}\}.$
     Now from Theorem \ref{newmulpoly1} , there exist functions $\alpha_1,\ldots ,\alpha_m:\mathcal P_f(^\N\N)\rightarrow \mathbb N$, and $H_,\ldots ,H_m:\mathcal P_f(^\N\N)\rightarrow \mathcal P_f(\mathbb N)$ such that

 \begin{enumerate}
     \item if $G,K\in\mathcal P_f(^\N\N)$, and $G\subsetneq K$, then for every $i\in \{1,\ldots ,m\}$, we have $H_i(G)< H_i(K)$, and

     \item if $n (\ge m)\in\mathbb N$, $G_1\subsetneq G_2\subsetneq\dots\subsetneq G_n$, then for each $\beta_1<\beta_2<\dots<\beta_m\le\{n\},$ in $\G(\N)$, $ f_i\in G_i$, and $P_1,\ldots ,P_m\in T$ we have 
     \[
     \left(\sum_{i\in\beta_1}  \alpha_1(G_i) + \frac{1}{x_1} P_1\left(\sum_{i\in\beta_1} \sum_{t\in H_1(G_i)} f_i(t)\right),\ldots, \sum_{i\in\beta_m} \alpha_m (G_i)+ \frac{1}{x_m} P_m\left(\sum_{i\in\beta_m}\sum_{t\in H_m(G_i)}f_i(t)\right)\right) \in A.
     \]
 \end{enumerate}
 For each $i\in \{1,\ldots ,m\},$ define $\alpha'_i:\mathcal P_f(^\N\N)\rightarrow \mathbb N$ as $\alpha'_i=x_i\cdot \alpha_i$. Then the new set of functions $(\alpha'_i)_{i=1}^m$, and $(H_i)_{i=1}^m$ works for the set $B.$ Hence $\overline{B}\cap \mathcal{L}\neq \emptyset.$ This completes the proof.

  The proof of  $\mathcal{L}$ is a left ideal of $(\beta (\N^m),+ )$ is easy to derive so we omit the proof.
\end{proof}

The following theorem shows that the combinatorial consequence of our Corollary \ref{todo3} is equivalent to a version of the polynomial extension of \cite[Theorem 17.31]{key-11}. The proof uses simple color induction arguments.

\begin{thm}\label{equivalent1}
Let $T\in \G(\mathbb{P})$. Then the following two versions are equivalent.
 
     \begin{enumerate}
      \item  Let $r,m\in\mathbb N$, and let $[\N]^m=\bigcup_{i=1}^rC_i$. Then there exists functions $\alpha : \mathcal{P}_f(^\N\mathbb{N})\to \mathbb{N}$ and $H: \mathcal{P}_f(^\N\mathbb{N}) \to \mathcal{P}_f\left(\mathbb{N}\right)$ such that 
         \begin{enumerate}
             \item \label{1.41} if $F,G \in \mathcal{P}_f(\tau)$ and $F \subsetneq G$ then $\max H(F) < \min H(G)$, and 
            \item \label{1.42} whenever $(G_i)_{i} \in \mathcal{P}_f(\tau)$, is a sequence of functions, i.e. $G_1 \subsetneq G_2 \subsetneq \cdots \subsetneq G_n\subsetneq \cdots$, then for every $P_1,\ldots ,P_m\in T,$ $\beta_1<\cdots<\beta_m$ in $\G(\N)$, we have 
            $$\left\lbrace \left( \sum_{i\in\beta_1} \alpha(G_i) + P_1\left(\sum_{i\in\beta_1} \sum_{t\in H(G_i)} f_i(t)\right),\ldots, \sum_{i\in\beta_m} \alpha (G_i)+P_m\left(\sum_{i\in\beta_m}\sum_{t\in H(G_i)}f_i(t)\right) \right):\right.$$\\
            $$\hspace{4in} f_i\in G_i \text{ for every }i\in \N \Bigg\} \subset C_j$$
            for some $j\in \{1,2,\ldots ,r\}.$
        \end{enumerate}
    \item Let $r,m\in\mathbb N$, and let $a_1,\dots,a_m\in\mathbb N$. Then for any $\mathbb N=\bigcup^r_{i=1}D_i$, there exists functions $\alpha : \mathcal{P}_f(^\N\mathbb{N})\to \mathbb{N}$ and $H: \mathcal{P}_f(^\N\mathbb{N}) \to \mathcal{P}_f\left(\mathbb{N}\right)$ such that for every $P\in T,$  
     \begin{enumerate}
             \item \label{1.41} if $F,G \in \mathcal{P}_f(\tau)$ and $F \subsetneq G$ then $\max H(F) < \min H(G)$, and 
            \item whenever $(G_i)_{i} \in \mathcal{P}_f(\tau)$, is a sequence of functions, i.e. $G_1 \subsetneq G_2 \subsetneq \cdots \subsetneq G_n\subsetneq \cdots$, then for every $\beta_1<\cdots<\beta_m$ in $\G(\N)$ we have for every $f_i\in G_i,$ and $P_1,\ldots ,P_m\in T,$ 
            $$a_1\cdot \left(\sum_{i\in\beta_1} \alpha(G_i) + P_1\left(\sum_{i\in\beta_1} \sum_{t\in H(G_i)} f_i(t)\right)\right)+\cdots +a_m\cdot \left(\sum_{i\in\beta_m} \alpha(G_i) + P_m\left(\sum_{i\in\beta_m} \sum_{t\in H(G_i)} f_i(t)\right)\right)\in D_j$$
            for some $j\in \{1,2,\ldots ,r\}.$
        \end{enumerate}
\end{enumerate}

\end{thm}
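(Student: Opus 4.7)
The plan is to establish the two implications by direct color reductions. For the implication $(1)\Rightarrow(2)$, fix a coloring $\mathbb{N}=\bigcup_{i=1}^{r}D_{i}$ and coefficients $a_{1},\ldots,a_{m}\in\mathbb{N}$. I would induce an $r$-coloring $\chi$ of $[\mathbb{N}]^{m}$ by declaring $\chi(x_{1},\ldots,x_{m})=j$ precisely when $a_{1}x_{1}+\cdots+a_{m}x_{m}\in D_{j}$. This is unambiguous because elements of $[\mathbb{N}]^{m}$ carry a natural ordering $x_{1}<\cdots<x_{m}$, and the structured tuples in the conclusion of (1) automatically satisfy this ordering: the condition $\beta_{1}<\cdots<\beta_{m}$ forces the index sets $H(G_{i})$ used by successive coordinates to be disjoint and increasing, so each coordinate lies in a strictly larger range than the previous. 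Applying (1) to $\chi$ yields functions $\alpha,H$ and some $j\in\{1,\ldots,r\}$ such that every structured tuple has $\chi$-color $j$; translating through the definition of $\chi$ gives every linear combination in $D_{j}$, which is the conclusion of (2).

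For the converse implication $(2)\Rightarrow(1)$, fix a coloring $\chi:[\mathbb{N}]^{m}\to\{1,\ldots,r\}$. The goal is to produce $\alpha,H$ so that all structured $m$-tuples share a common $\chi$-color. The natural attempt is to fix coefficients $a_{j}$ acting as place values in a very large base $B$, in which case $a_{1}x_{1}+\cdots+a_{m}x_{m}$ would uniquely recover $(x_{1},\ldots,x_{m})$, and any coloring of $\mathbb{N}$ would pull back to one on $[\mathbb{N}]^{m}$. Since the coordinates $S_{\ell}$ of the structured tuples are unbounded, no single $B$ suffices; instead, I would build $\alpha,H$ inductively, at the $n$th step applying (2) to an auxiliary coloring of $\mathbb{N}$ that records the $\chi$-behavior of tuples whose earlier coordinates are already constrained by the previously chosen $\alpha(G_{1}),\ldots,\alpha(G_{n-1})$ and $H(G_{1}),\ldots,H(G_{n-1})$. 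The resulting $\alpha(G_{n}),H(G_{n})$ would then be selected inside the monochromatic cell produced by (2) at that stage, so that the color propagates coherently as new $G_{n}$ are added.

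The principal obstacle lies in this reverse direction: unlike the forward direction, there is no global bounded encoding of $[\mathbb{N}]^{m}$-colorings into $\mathbb{N}$-colorings, so the passage from (2) to (1) inherently requires either the stagewise construction sketched above or a detour through the tensor-power idempotent ultrafilter underlying Corollary~\ref{todo3}. Either route reduces the matter to guaranteeing that a single pair $\alpha,H$ serves all $P_{1},\ldots,P_{m}\in T$ and all $f_{i}\in G_{i}$ simultaneously, which follows once an idempotent $p\in E(\mathcal{J}_{p},+)$ with $\chi^{-1}(j)\in p\otimes\cdots\otimes p$ is produced (for some appropriate $j$) from the hypothesis (2).
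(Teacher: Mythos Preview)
Your forward direction $(1)\Rightarrow(2)$ is exactly the paper's argument: induce the $r$-coloring of $[\mathbb{N}]^m$ by $(z_1,\ldots,z_m)\in C_i\Leftrightarrow a_1z_1+\cdots+a_mz_m\in D_i$, apply (1), and read off (2).

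For $(2)\Rightarrow(1)$ the paper writes only ``Proof is similar,'' with no further detail. Your observation that the reverse reduction is \emph{not} symmetric---there is no finite-to-one map from $\mathbb{N}$ back to $[\mathbb{N}]^m$ compatible with arbitrary linear forms $\sum a_jx_j$---is correct, and the paper does not address it. So you are being more careful than the paper here, not less. Of the two routes you sketch, the ultrafilter detour is the one that actually closes the gap cleanly: statement (1) is precisely the coloring consequence of Corollary~\ref{todo3}, which the paper has already established independently of (2). Thus $(2)\Rightarrow(1)$ holds simply because (1) is already a theorem; no reduction from (2) is needed. Your stagewise-encoding idea, by contrast, would require substantial additional work to make rigorous (the auxiliary colorings at each stage depend on previously chosen data in a way that interacts badly with the universal quantification over chains $G_1\subsetneq G_2\subsetneq\cdots$), and it is not clear it can be completed without reinventing the ultrafilter argument.

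In short: your $(1)\Rightarrow(2)$ matches the paper; your discussion of $(2)\Rightarrow(1)$ correctly flags that the paper's ``similar'' is not a proof, and your second proposed route (via the tensor-power idempotent) is the right fix.
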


\begin{proof}
$(1)\implies(2)$
Let $\mathbb N=\bigcup^r_{i=1}D_i$, and induce a coloring $\left[\mathbb N\right]^{(m)}=\bigcup^r_{i=1}C_i$ by defining $(z_1,z_2,\dots,z_m)\in C_i\Leftrightarrow a_1z_1+\dots+a_mz_m \in D_i$. Now (2) directly follows from (1).\\

$(2)\implies(1)$ Proof is similar.   
\end{proof}

The following theorem is a variation of Theorem \ref{equivalent1}. Proof is similar to the proof of above one and hence we omit it.
\begin{thm}\label{equivalent2}
Let $T\in \G(\mathbb{P})$. Then the following two versions are equivalent.
 
     \begin{enumerate}
      \item  Let $r,m\in\mathbb N$, and let $[\N]^m=\bigcup_{i=1}^rC_i$. Then there exists functions $\alpha_1,\ldots ,\alpha_m : \mathcal{P}_f(^\N\mathbb{N})\to \mathbb{N}$ and $H_1,\ldots ,H_m: \mathcal{P}_f(^\N\mathbb{N}) \to \mathcal{P}_f\left(\mathbb{N}\right)$ such that 
         \begin{enumerate}
             \item \label{1.41} if $F,G \in \mathcal{P}_f(\tau)$ and $F \subsetneq G$ then for each $i\in \{1,2,\ldots ,m\}$, we have $\max H_i(F) < \min H_i(G)$, and 
            \item \label{1.42} whenever $(G_i)_{i} \in \mathcal{P}_f(\tau)$, is a sequence of functions, i.e. $G_1 \subsetneq G_2 \subsetneq \cdots \subsetneq G_n\subsetneq \cdots$, then for every $P_1,\ldots ,P_m\in T,$ $\beta_1<\cdots<\beta_m$ in $\G(\N)$ we have 
            $$\left\lbrace \left( \sum_{i\in\beta_1} \alpha_1(G_i) + P_1\left(\sum_{i\in\beta_1} \sum_{t\in H_1(G_i)} f_i(t)\right),\ldots, \sum_{i\in\beta_m} \alpha_m (G_i)+P_m\left(\sum_{i\in\beta_m}\sum_{t\in H_m(G_i)}f_i(t)\right) \right):\right. $$
           $$ \left. \hspace{3in} f_i\in G_i \text{ for every }i\in \N \right\rbrace\subset C_j$$
            for some $j\in \{1,2,\ldots ,r\}$.
            
        \end{enumerate}
    \item Let $r,m\in\mathbb N$, and let $a_1,\dots,a_m\in\mathbb N$. Then for any $\mathbb N=\bigcup^r_{i=1}D_i$, there exists functions $\alpha_1,\ldots ,\alpha_m : \mathcal{P}_f(^\N\mathbb{N})\to \mathbb{N}$ and $H_1,\ldots ,H_m: \mathcal{P}_f(^\N\mathbb{N}) \to \mathcal{P}_f\left(\mathbb{N}\right)$ such that  such that for every $P\in T,$  
     \begin{enumerate}
             \item \label{1.41} if $F,G \in \mathcal{P}_f(\tau)$ and $F \subsetneq G$ then for each $i\in \{1,2,\ldots ,m\}$, we have $\max H_i(F) < \min H_i(G)$, and  
            \item whenever $(G_i)_{i} \in \mathcal{P}_f(\tau)$, is a sequence of functions, i.e. $G_1 \subsetneq G_2 \subsetneq \cdots \subsetneq G_n\subsetneq \cdots$, then for every $\beta_1<\cdots<\beta_m$ in $\G(\N)$ we have for every $f_i\in G_i,$ and $P_1,\ldots ,P_m\in T,$ 
            $$a_1\cdot \left(\sum_{i\in\beta_1} \alpha(G_i) + P_1\left(\sum_{i\in\beta_1} \sum_{t\in H_1(G_i)} f_i(t)\right)\right)+\cdots +a_m\cdot \left(\sum_{i\in\beta_m} \alpha(G_i) + P_m\left(\sum_{i\in\beta_m} \sum_{t\in H_m(G_i)} f_i(t)\right)\right)\in D_j$$
            for some $j\in \{1,2,\ldots ,r\}.$
        \end{enumerate}
\end{enumerate}

\end{thm}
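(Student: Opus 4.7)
The plan is to mimic the color-induction argument used in the proof of Theorem \ref{equivalent1} verbatim, with only the cosmetic change that each coordinate $k\in\{1,2,\ldots,m\}$ now carries its own pair of selector functions $(\alpha_k,H_k)$. Since the translation between a tuple-coloring of $[\N]^m$ and a coloring of $\N$ through a fixed linear form $a_1z_1+\cdots+a_mz_m$ acts on each coordinate independently, the extra flexibility of allowing distinct $(\alpha_k,H_k)$ across coordinates plays no role in the color bookkeeping; the argument propagates through term by term.

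For $(1)\Rightarrow(2)$, fix $\N=\bigcup_{i=1}^r D_i$ together with coefficients $a_1,\ldots,a_m\in\N$, and pull these back to a coloring $[\N]^m=\bigcup_{i=1}^r C_i$ defined by
$$(z_1,\ldots,z_m)\in C_i \iff a_1z_1+\cdots+a_mz_m\in D_i.$$
Applying hypothesis (1) produces functions $\alpha_1,\ldots,\alpha_m$ and $H_1,\ldots,H_m$ together with an index $j\in\{1,\ldots,r\}$ such that every admissible tuple
$$\Bigl(\sum_{i\in\beta_1}\alpha_1(G_i)+P_1\bigl(\textstyle\sum_{i\in\beta_1}\sum_{t\in H_1(G_i)}f_i(t)\bigr),\,\ldots,\,\sum_{i\in\beta_m}\alpha_m(G_i)+P_m\bigl(\textstyle\sum_{i\in\beta_m}\sum_{t\in H_m(G_i)}f_i(t)\bigr)\Bigr)$$
lies in $C_j$. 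By the definition of the pullback, the corresponding linear combination $a_1y_1+\cdots+a_my_m$ then sits in $D_j$, which is precisely the conclusion of (2).

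For $(2)\Rightarrow(1)$, given a coloring $[\N]^m=\bigcup_{i=1}^r C_i$, choose positionally separating coefficients $a_1,\ldots,a_m\in\N$ (for instance $a_k=N^{k-1}$ with $N$ sufficiently large relative to the stage of the construction) and induce a coloring $\N=\bigcup_{i=1}^r D_i$ by coloring each integer $n$ according to the color of its unique decoding (if any) as $n=a_1z_1+\cdots+a_mz_m$ with $z_1<\cdots<z_m$, placing the remaining integers arbitrarily. Applying hypothesis (2) with these coefficients and this induced coloring delivers functions $\alpha_k,H_k$ and a single monochromatic class $D_j$; decoding each linear combination then recovers the tuple conclusion of (1) in the same class $C_j$.

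The only genuinely delicate point is justifying the decoding step in direction $(2)\Rightarrow(1)$: because the coordinate sums $y_k=\sum_{i\in\beta_k}(\alpha_k(G_i)+\sum_{t\in H_k(G_i)}f_i(t))$ arising from the inductive construction are a priori unbounded, no once-and-for-all choice of $a_1,\ldots,a_m$ separates arbitrary tuples. The standard remedy is to exploit the strict ordering $\beta_1<\cdots<\beta_m$ together with the monotonicity provided by each $H_k$: stage by stage, one fixes $a_{k+1}$ to be much larger than any $a_ky_k$ attainable at that stage, turning the linear form into a bona fide positional numeral on the set of realisable tuples. Once this bookkeeping is carried out, the translation between the two formulations is mechanical, exactly parallel to the argument sketched for Theorem \ref{equivalent1}.
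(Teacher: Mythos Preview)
Your argument for $(1)\Rightarrow(2)$ is correct and is exactly the paper's approach: pull back the $r$-coloring of $\N$ along the linear form $(z_1,\ldots,z_m)\mapsto a_1z_1+\cdots+a_mz_m$ to obtain an $r$-coloring of $[\N]^m$, apply hypothesis (1), and read off the conclusion of (2). The paper gives precisely this one-line argument for Theorem~\ref{equivalent1} and then declares Theorem~\ref{equivalent2} ``similar''.

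Your argument for $(2)\Rightarrow(1)$, however, has a genuine gap, and the fix you sketch does not close it. In statement~(2) the coefficients $a_1,\ldots,a_m$ are part of the \emph{input}: you must fix them, together with the coloring of $\N$, \emph{before} the conclusion hands you the functions $\alpha_k,H_k$. Your ``stage by stage'' proposal --- choosing $a_{k+1}$ large relative to the values $a_ky_k$ already produced --- is circular, because those values $y_k$ depend on the output $\alpha_k,H_k$, which in turn depends on the $a_k$'s you are trying to choose. No once-and-for-all choice of $a_1,\ldots,a_m$ makes the linear form $a_1z_1+\cdots+a_mz_m$ injective on the unbounded set of tuples that will eventually arise, so the ``unique decoding'' step cannot be made to work this way. (Note also that the tuples produced in (1) need not satisfy $z_1<\cdots<z_m$, so restricting the decoding to increasing tuples does not help.)

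To be fair, the paper itself is silent on this direction: for Theorem~\ref{equivalent1} it writes only ``$(2)\implies(1)$ Proof is similar'', and for Theorem~\ref{equivalent2} it omits the proof entirely. So there is no explicit argument in the paper to compare yours against. But a correct proof of $(2)\Rightarrow(1)$ cannot proceed by the positional-encoding trick you describe; if one exists at the purely combinatorial level, it must use a different mechanism.
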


In the following theorem, we explicitly deduce some ultrafilters witnessing the polynomial extension of \cite[Theorem 17.31]{key-11}. However, we don't know if the converse part of this theorem is true or not.
\begin{thm}\label{book}
 Let $m\in\mathbb N,$ $a_1,\dots,a_m\in\mathbb N$, $\,p\in E\left( K(\beta \mathbb N,+)\right)$, and $A\in a_1p+\dots+a_m p$. Then for every $T\in\mathcal P_f(\mathbb P)$, there exists functions $\alpha : \mathcal{P}_f(^\N\mathbb{N})\to \mathbb{N}$ and $H: \mathcal{P}_f(^\N\mathbb{N}) \to \mathcal{P}_f\left(\mathbb{N}\right)$ such that 
         \begin{enumerate}
             \item \label{1.41} if $F,G \in \mathcal{P}_f(\tau)$ and $F \subsetneq G$ then $\max H(F) < \min H(G)$, and 
            \item \label{1.42} whenever $(G_i)_{i} \in \mathcal{P}_f(\tau)$, is a sequence of functions, i.e. $G_1 \subsetneq G_2 \subsetneq \cdots \subsetneq G_n\subsetneq \cdots$, then for every $P_1,\ldots ,P_m\in T,$ $\beta_1<\cdots<\beta_m$ in $\G(\N)$, we have 
             $$a_1\cdot \left(\sum_{i\in\beta_1} \alpha(G_i) + P_1\left(\sum_{i\in\beta_1} \sum_{t\in H(G_i)} f_i(t)\right)\right)+\cdots +a_m\cdot \left(\sum_{i\in\beta_m} \alpha(G_i) + P_m\left(\sum_{i\in\beta_m} \sum_{t\in H(G_i)} f_i(t)\right)\right)\in A.$$
        \end{enumerate}
        In fact we may consider $p\in E\left(\mathcal{J}_p,+\right).$
\end{thm}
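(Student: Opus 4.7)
The plan is to reduce Theorem \ref{book} to Corollary \ref{todo3} via a polynomial pushforward. Consider the linear polynomial $g:\N^m\to\N$ defined by $g(x_1,\ldots,x_m)=a_1x_1+\cdots+a_mx_m$, which has no constant term. By \cite[Theorem 3.2]{bhw}, the continuous extension satisfies
$$\tilde{g}\Bigl(\bigotimes_{i=1}^{m}p\Bigr)=g(p,p,\ldots,p)=a_1p+a_2p+\cdots+a_mp.$$
Consequently, if $A\in a_1p+\cdots+a_mp$, then $B:=g^{-1}(A)\subseteq\N^m$ belongs to the tensor power $\bigotimes_{i=1}^{m}p$.

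Next, I would apply Corollary \ref{todo3} to the set $B$ with the given polynomial family $T$. Since $p\in E(K(\beta\N,+))$ (and more generally, since the corollary is stated to hold when $p\in E(\mathcal{J}_p,+)$), Corollary \ref{todo3} delivers functions $\alpha:\mathcal{P}_f({}^\N\N)\to\N$ and $H:\mathcal{P}_f({}^\N\N)\to\mathcal{P}_f(\N)$ satisfying condition (1) on the tallness of $H$, and for every chain $G_1\subsetneq G_2\subsetneq\cdots$, every choice $f_i\in G_i$, every $P_1,\ldots,P_m\in T$ and every $\beta_1<\cdots<\beta_m$ in $\G(\N)$, the $m$-tuple
$$\left(\sum_{i\in\beta_j}\alpha(G_i)+P_j\Bigl(\sum_{i\in\beta_j}\sum_{t\in H(G_i)}f_i(t)\Bigr)\right)_{\!j=1}^{m}$$
lies in $B$.

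The conclusion then falls out by simply applying $g$ to this tuple: since the tuple is in $B=g^{-1}(A)$, its image under $g$, which is precisely
$$\sum_{j=1}^{m}a_j\cdot\left(\sum_{i\in\beta_j}\alpha(G_i)+P_j\Bigl(\sum_{i\in\beta_j}\sum_{t\in H(G_i)}f_i(t)\Bigr)\right),$$
belongs to $A$. This is exactly condition (2) of Theorem \ref{book}, and the statement that $p$ may be taken in $E(\mathcal{J}_p,+)$ is inherited verbatim from Corollary \ref{todo3}. I expect no genuine obstacle here; the only delicate point is the identification $\tilde{g}(\bigotimes_{i=1}^{m}p)=a_1p+\cdots+a_mp$, which is precisely what \cite[Theorem 3.2]{bhw} guarantees for constant-term-free polynomials. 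Thus the result is, in essence, the polynomial multidimensional central sets theorem composed with a linear polynomial push-down to $\N$.
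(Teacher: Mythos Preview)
Your proposal is correct and follows essentially the same route as the paper: define the linear map $\Phi(x_1,\ldots,x_m)=\sum a_ix_i$, identify $\widetilde{\Phi}(\bigotimes_{i=1}^m p)$ with $a_1p+\cdots+a_mp$, pull $A$ back to some $B\in\bigotimes_{i=1}^m p$, and invoke Corollary~\ref{todo3}. The only cosmetic difference is that the paper computes the identification directly via iterated $p$-limits rather than citing \cite[Theorem 3.2]{bhw}.
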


\begin{proof}

In this proof we do not use \cite[Theorem 3.2]{bhw}, but we use simple concept of $p-\lim.$
 Let $\Phi: \mathbb N^m\rightarrow \mathbb N$ be the map defined by $\Phi (x_1,x_2,\dots,x_m)=\sum^m_{i=1}a_ix_i$. Clearly, $\Phi$ is a continuous map, and so we may apply \cite[Theorem 3.49]{key-11}. Let $\widetilde{\Phi}:\beta(\mathbb N^m)\rightarrow\beta\mathbb N$ be the continuous extension of $\Phi$. Let $p\in E(K(\beta\mathbb N,+))$, and $\bigotimes_{i=1}^m p\in\beta(\mathbb N^m)$. Now, 

\begin{align*}
&\widetilde{\Phi}\left[\bigotimes_{i=1}^m p\right]
=\widetilde{\Phi}\left[p-\lim_{x_1}\cdots p-\lim_{x_m}(x_1,\dots,x_m)\right]&\\
&=p-\lim_{x_1}\cdots p-\lim_{x_m}\Phi(x_1,\dots,x_m)&\\
&=p-\lim_{x_1}\dots p-\lim_{x_m}\sum^m_{i=1}a_ix_i&\\
&=a_1\cdot p+\dots+a_m \cdot p.&
\end{align*}
Let $A\in a_1 p+\dots+a_m p=\widetilde{\Phi}\left[\bigotimes_{i=1}^m p\right]$. Hence there exists $B\in \bigotimes_{i=1}^m p$ such that $\Phi[B]\subseteq A$. Now from Corollary \ref{todo3}, our result follows.
 
\end{proof}    

The following theorem is a variation of Theorem \ref{book}.

\begin{thm}\label{book1}
 Let $m\in\mathbb N,$ $a_1,\dots,a_m\in\mathbb N$, $\,p_1,\ldots ,p_m\in E\left( K(\beta \mathbb N,+)\right)$, and $A\in a_1p_1+\dots+a_m p_m$. Then for every $T\in\mathcal P_f(\mathbb P)$, there exists functions $\alpha_1,\ldots ,\alpha_m : \mathcal{P}_f(^\N\mathbb{N})\to \mathbb{N}$ and $H_1,\ldots ,H_m: \mathcal{P}_f(^\N\mathbb{N}) \to \mathcal{P}_f\left(\mathbb{N}\right)$ such that 
         \begin{enumerate}
             \item \label{1.41} if $F,G \in \mathcal{P}_f(\tau)$ and $F \subsetneq G$ then for each $i\in \{1,2,\ldots ,m\}$, we have $\max H_i(F) < \min H_i(G)$, and 
            \item \label{1.42} whenever $(G_i)_{i} \in \mathcal{P}_f(\tau)$, is a sequence of functions, i.e. $G_1 \subsetneq G_2 \subsetneq \cdots \subsetneq G_n\subsetneq \cdots$, then for every $P_1,\ldots ,P_m\in T,$ $\beta_1<\cdots<\beta_m$ in $\G(\N)$, we have 
             $$a_1\cdot \left(\sum_{i\in\beta_1} \alpha_1(G_i) + P_1\left(\sum_{i\in\beta_1} \sum_{t\in H_1(G_i)} f_i(t)\right)\right)+\cdots +a_m\cdot \left(\sum_{i\in\beta_m} \alpha_m(G_i) + P_m\left(\sum_{i\in\beta_m} \sum_{t\in H_m(G_i)} f_i(t)\right)\right)\in A.$$
        \end{enumerate}
        In fact we may consider $p_1,\ldots ,p_m\in E\left(\mathcal{J}_p,+\right).$
\end{thm}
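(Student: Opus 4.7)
The plan is to mirror the proof of Theorem \ref{book}, simply replacing the appeal to Corollary \ref{todo3} by an appeal to Corollary \ref{newmulpoly2}, which permits the use of several (possibly distinct) minimal idempotents. First I would define the continuous map $\Phi:\N^m\to\N$ by $\Phi(x_1,\ldots,x_m)=\sum_{i=1}^m a_ix_i$, and let $\widetilde{\Phi}:\beta(\N^m)\to\beta\N$ denote its continuous extension (available because $\N^m$ is discrete, see \cite[Theorem 3.49]{key-11}).

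Using the iterated $p$-limit characterization of the tensor product and the standard fact that continuous extensions commute with $p$-limits, one computes
\begin{align*}
\widetilde{\Phi}\!\left[\bigotimes_{i=1}^m p_i\right]
&= \widetilde{\Phi}\!\left[p_1\text{-}\lim_{x_1}\cdots p_m\text{-}\lim_{x_m}(x_1,\ldots,x_m)\right]\\
&= p_1\text{-}\lim_{x_1}\cdots p_m\text{-}\lim_{x_m}\sum_{i=1}^m a_ix_i\\
&= a_1p_1+\cdots+a_mp_m.
\end{align*}
Hence for any $A\in a_1p_1+\cdots+a_mp_m$ there exists $B\in\bigotimes_{i=1}^m p_i$ with $\Phi[B]\subseteq A$.

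Next I would apply Corollary \ref{newmulpoly2} to the set $B$ and the given $T\in\G(\mathbb P)$, obtaining functions $\alpha_1,\ldots,\alpha_m:\G(^\N\N)\to\N$ and $H_1,\ldots,H_m:\G(^\N\N)\to\G(\N)$ whose clause (1) is exactly clause (1) of the present theorem, and such that for any increasing chain $G_1\subsetneq G_2\subsetneq\cdots$, any $\beta_1<\cdots<\beta_m$ in $\G(\N)$, any choice $f_i\in G_i$, and any $P_1,\ldots,P_m\in T$, the $m$-tuple whose $j$-th coordinate is $\sum_{i\in\beta_j}\alpha_j(G_i)+P_j\bigl(\sum_{i\in\beta_j}\sum_{t\in H_j(G_i)}f_i(t)\bigr)$ lies in $B$. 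Applying $\Phi$ to that tuple and invoking $\Phi[B]\subseteq A$ gives precisely the linear combination $\sum_j a_j\bigl(\sum_{i\in\beta_j}\alpha_j(G_i)+P_j(\cdots)\bigr)\in A$ asserted in clause (2). The concluding ``in fact'' clause about $E(\mathcal{J}_p,+)$ is inherited directly from Corollary \ref{newmulpoly2}, which already permits $p_1,\ldots,p_m$ to be chosen there.

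The only step with any genuine content is the $p$-limit calculation identifying $\widetilde\Phi[\bigotimes_{i=1}^m p_i]$ with $a_1p_1+\cdots+a_mp_m$; this is routine once one records the continuity of $\Phi$ and the elementary behaviour of $p$-limits under continuous maps. Everything else is a straightforward pushforward through $\Phi$ of the structure supplied by Corollary \ref{newmulpoly2}. Consequently I do not expect any real obstacle: the theorem is essentially the ``distinct-idempotent version'' of Theorem \ref{book}, and its proof needs only the corresponding replacement of the underlying multidimensional stronger polynomial central sets theorem.
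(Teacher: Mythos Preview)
Your proposal is correct and follows essentially the same route as the paper: define $\Phi(x_1,\ldots,x_m)=\sum_i a_ix_i$, use the iterated $p$-limit description of the tensor product to identify $\widetilde{\Phi}\bigl[\bigotimes_{i=1}^m p_i\bigr]$ with $a_1p_1+\cdots+a_mp_m$, pull $A$ back to some $B\in\bigotimes_{i=1}^m p_i$, and then invoke Corollary~\ref{newmulpoly2}. The paper proceeds exactly this way, explicitly noting that the argument is the distinct-idempotent analogue of the proof of Theorem~\ref{book}.
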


\begin{proof}

The proof is similar to the proof of Theorem \ref{book}.
 Let $\Phi: \mathbb N^m\rightarrow \mathbb N$ be the map defined by $\Phi (x_1,x_2,\dots,x_m)=\sum^m_{i=1}a_ix_i$. Clearly, $\Phi$ is a continuous map, and so we may apply \cite[Theorem 3.49]{key-11}. Let $\widetilde{\Phi}:\beta(\mathbb N^m)\rightarrow\beta\mathbb N$ be the continuous extension of $\Phi$. Let $p_1,\ldots ,p_
m\in E(K(\beta\mathbb N,+))$, and $\bigotimes_{i=1}^m p_i\in\beta(\mathbb N^m)$. Now, 

\begin{align*}
&\widetilde{\Phi}\left[\bigotimes_{i=1}^m p_i\right]
=\widetilde{\Phi}\left[p_1-\lim_{x_1}\cdots p_m-\lim_{x_m}(x_1,\dots,x_m)\right]&\\
&=p_1-\lim_{x_1}\cdots p_m-\lim_{x_m}\Phi(x_1,\dots,x_m)&\\
&=p_1-\lim_{x_1}\dots p_m-\lim_{x_m}\sum^m_{i=1}a_ix_i&\\
&=a_1\cdot p_1+\dots+a_m \cdot p_m.&
\end{align*}
Let $A\in a_1 p_1+\dots+a_m p_m=\widetilde{\Phi}\left[\bigotimes_{i=1}^m p_i\right]$. Hence there exists $B\in \bigotimes_{i=1}^m p_i$ such that $\Phi[B]\subseteq A$. Now from Corollary \ref{newmulpoly2}, our result follows.
 
\end{proof}

The following corollary is an immediate consequence of Theorem \ref{book}.

\begin{cor}\label{now}
    Let $m\in\mathbb N,$ $h:\N^m\rightarrow \N$ be any polynomial with no constant term. Let $a_1,\dots,a_m\in\mathbb N$, $\,p\in E\left( K(\beta \mathbb N,+)\right)$, and $A\in \tilde{h}(a_1p+\dots+a_m p)$. Then for every $T\in\mathcal P_f(\mathbb P)$, there exists functions $\alpha : \mathcal{P}_f(^\N\mathbb{N})\to \mathbb{N}$ and $H: \mathcal{P}_f(^\N\mathbb{N}) \to \mathcal{P}_f\left(\mathbb{N}\right)$ such that 
         \begin{enumerate}
             \item \label{1.41} if $F,G \in \mathcal{P}_f(\tau)$ and $F \subsetneq G$ then $\max H(F) < \min H(G)$, and 
            \item \label{1.42} whenever $(G_i)_{i} \in \mathcal{P}_f(\tau)$, is a sequence of functions, i.e. $G_1 \subsetneq G_2 \subsetneq \cdots \subsetneq G_n\subsetneq \cdots$, then for every $P_1,\ldots ,P_m\in T,$ $\beta_1<\cdots<\beta_m$ in $\G(\N)$, we have 
             $$h\left(a_1\cdot \left(\sum_{i\in\beta_1} \alpha(G_i) + P_1\left(\sum_{i\in\beta_1} \sum_{t\in H(G_i)} f_i(t)\right)\right)+\cdots +a_m\cdot \left(\sum_{i\in\beta_m} \alpha(G_i) + P_m\left(\sum_{i\in\beta_m} \sum_{t\in H(G_i)} f_i(t)\right)\right)\right)\in A.$$
        \end{enumerate}
        In fact we may consider $p\in E\left(\mathcal{J}_p,+\right).$
\end{cor}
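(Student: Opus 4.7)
The plan is to reduce Corollary \ref{now} to Theorem \ref{book} via the standard continuity property of the Stone--\v{C}ech extension. Reading the statement literally, the formula in clause (2) applies $h$ to the single scalar $a_1y_1+\cdots+a_my_m$; so we may treat $h$ as a one-variable polynomial with no constant term (on the image side of the linear map $(x_1,\ldots,x_m)\mapsto a_1x_1+\cdots+a_mx_m$). Recall that for $h:\N\to\N$ the continuous extension $\tilde h:\beta\N\to\beta\N$ satisfies $B\in\tilde h(q)$ if and only if $h^{-1}[B]\in q$, for every $q\in\beta\N$. Hence the hypothesis $A\in\tilde h(a_1p+\cdots+a_mp)$ is equivalent to $h^{-1}[A]\in a_1p+\cdots+a_mp$.

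The first step is to invoke Theorem \ref{book} with the set $h^{-1}[A]$ in place of $A$ and with the same finite family $T$ of polynomials. Since $p\in E(K(\beta\N,+))$ and $h^{-1}[A]\in a_1p+\cdots+a_mp$, the theorem produces functions $\alpha:\mathcal P_f({}^\N\N)\to\N$ and $H:\mathcal P_f({}^\N\N)\to\mathcal P_f(\N)$ satisfying clause (1) of Corollary \ref{now}, and guaranteeing that for every chain $G_1\subsetneq G_2\subsetneq\cdots$ in $\mathcal P_f({}^\N\N)$, every $\beta_1<\cdots<\beta_m$ in $\G(\N)$, every $f_i\in G_i$, and every $P_1,\ldots,P_m\in T$, the expression
\[
a_1\!\left(\sum_{i\in\beta_1}\alpha(G_i)+P_1\bigl(\sum_{i\in\beta_1}\sum_{t\in H(G_i)}f_i(t)\bigr)\right)+\cdots+a_m\!\left(\sum_{i\in\beta_m}\alpha(G_i)+P_m\bigl(\sum_{i\in\beta_m}\sum_{t\in H(G_i)}f_i(t)\bigr)\right)
\]
belongs to $h^{-1}[A]$.

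The second step is immediate: by the definition of preimage, this element lies in $h^{-1}[A]$ precisely when its image under $h$ lies in $A$, which is exactly clause (2) of Corollary \ref{now}. The final assertion that one may take $p\in E(\mathcal J_p,+)$ is inherited from the corresponding strengthening in Theorem \ref{book}.

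There is essentially no technical obstacle; the author himself flags the corollary as an \emph{immediate consequence} of Theorem \ref{book}. The only mildly delicate points are (i) to pin down the intended domain of $h$ from the displayed formula (as sketched above) and (ii) to verify that the continuous-extension identity $A\in\tilde h(q)\iff h^{-1}[A]\in q$ applies in the form we need, both of which are routine.
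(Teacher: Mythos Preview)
Your proof is correct and matches the paper's own argument: the paper simply picks $B\in a_1p+\cdots+a_mp$ with $h[B]\subseteq A$ (your choice $B=h^{-1}[A]$ is the canonical such set) and then invokes Theorem~\ref{book} on $B$. Your observation about the domain of $h$ is also apt---despite the declared signature $h:\N^m\to\N$, the formula and the paper's proof both treat $h$ as acting on the single scalar $a_1y_1+\cdots+a_my_m\in\N$.
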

\begin{proof}
    Let $A\in \tilde{h}(a_1p+\dots+a_m p)$, and let $B\in a_1p+\dots+a_m p $ be such that $h[B]\subseteq A.$ Now Theorem \ref{book} directly implies our result.
\end{proof}

The following corollary is an immediate variation of Corollary \ref{now}.

\begin{cor}\label{now1}
    Let $m\in\mathbb N,$ $h:\N^m\rightarrow \N$ be any polynomial with no constant term. Let $a_1,\dots,a_m\in\mathbb N$, $\,p_1,\ldots ,p_m\in E\left( K(\beta \mathbb N,+)\right)$, and $A\in \tilde{h}(a_1p_1+\dots+a_m p_m)$. Then for every $T\in\mathcal P_f(\mathbb P)$, there exists functions $\alpha_1,\ldots ,\alpha_m : \mathcal{P}_f(^\N\mathbb{N})\to \mathbb{N}$ and $H_1,\ldots ,H_m: \mathcal{P}_f(^\N\mathbb{N}) \to \mathcal{P}_f\left(\mathbb{N}\right)$ such that 
         \begin{enumerate}
             \item \label{1.41} if $F,G \in \mathcal{P}_f(\tau)$ and $F \subsetneq G$ then for every $i\in \{1,\ldots ,m\}$, we have $\max H_i(F) < \min H_i(G)$, and 
            \item \label{1.42} whenever $(G_i)_{i} \in \mathcal{P}_f(\tau)$, is a sequence of functions, i.e. $G_1 \subsetneq G_2 \subsetneq \cdots \subsetneq G_n\subsetneq \cdots$, then for every $P_1,\ldots ,P_m\in T,$ $\beta_1<\cdots<\beta_m$ in $\G(\N)$, we have 
             $$h\left(a_1\cdot \left(\sum_{i\in\beta_1} \alpha_1(G_i) + P_1\left(\sum_{i\in\beta_1} \sum_{t\in H_1(G_i)} f_i(t)\right)\right)+\cdots +a_m\cdot \left(\sum_{i\in\beta_m} \alpha_m(G_i) + P_m\left(\sum_{i\in\beta_m} \sum_{t\in H_m(G_i)} f_i(t)\right)\right)\right)\in A.$$
        \end{enumerate}
        In fact we may consider $p_1,\ldots ,p_m\in E\left(\mathcal{J}_p,+\right).$
\end{cor}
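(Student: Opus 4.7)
The plan is to reduce Corollary \ref{now1} to Theorem \ref{book1} by pulling back along the continuous extension $\tilde{h}$, mirroring the one-line reduction that was used in the proof of Corollary \ref{now}.

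First I would invoke the defining property of the continuous extension between discrete spaces: from $A\in\tilde{h}(a_1 p_1+\cdots+a_m p_m)$ one extracts the set $B:=h^{-1}[A]$, which then lies in $a_1 p_1+\cdots+a_m p_m$ and satisfies $h[B]\subseteq A$. This is the standard fact that for any $q\in\beta\N$ and any function between discrete semigroups, a set belongs to $\tilde h(q)$ iff its preimage belongs to $q$.

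Next I would apply Theorem \ref{book1} to this $B$, to the prescribed family $T\in\mathcal{P}_f(\mathbb{P})$, the coefficients $a_1,\ldots,a_m\in\N$, and the given minimal idempotents $p_1,\ldots,p_m\in E(K(\beta\N,+))$. The conclusion of that theorem supplies functions $\alpha_1,\ldots,\alpha_m:\mathcal{P}_f({}^{\N}\N)\to\N$ and $H_1,\ldots,H_m:\mathcal{P}_f({}^{\N}\N)\to\mathcal{P}_f(\N)$ obeying the growth condition (1) of our corollary, and forcing every linear combination of the form displayed in condition (2) to land in $B$. Applying $h$ to any such element places it inside $h[B]\subseteq A$, which is exactly the conclusion required in (2).

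The final clause---that $p_1,\ldots,p_m$ may in fact be chosen from $E(\mathcal{J}_p,+)$---is inherited verbatim from the analogous clause of Theorem \ref{book1}. I do not anticipate any real obstacle here: the heavy lifting (the Lifting Lemma \ref{Lifting lemma} and the inductive construction of the $\alpha_i$ and $H_i$ via the polynomials $Q_{p,d}$) has already been absorbed into Theorem \ref{newmulpoly1}, which is then wrapped by the continuous extension $\tilde\Phi$ in Theorem \ref{book1}; the step to Corollary \ref{now1} just adds one more application of a continuous extension on top.
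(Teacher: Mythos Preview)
Your proposal is correct and follows essentially the same approach as the paper: pick $B\in a_1p_1+\cdots+a_mp_m$ with $h[B]\subseteq A$ (you make the choice $B=h^{-1}[A]$ explicit), apply Theorem~\ref{book1} to $B$, and then push forward through $h$. The paper's proof is the same one-line reduction.
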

\begin{proof}
    Let $A\in \tilde{h}(a_1p_1+\dots+a_m p_m)$, and let $B\in a_1p_1+\dots+a_m p_m $ be such that $h[B]\subseteq A.$ Now Theorem \ref{book1} directly implies our result.
\end{proof}

Though we don't know the ultrafilter characterization of the Theorems \ref{book}, \ref{book1}, but we can completely characterize them for the set $T=\{i\}$, where $i(x)=x$ for all $x\in \N.$ The following theorem is a variation of  \cite[Theorem 17.31]{key-11} for idempotents of $(\mathcal{J},+)$.

\begin{thm}\label{charac} 
Let $T=\{i\}\in \G(\mathbb{P})$, and $A\subseteq \N$.
\begin{enumerate}
    \item if $A$ satisfies the conclusion of Theorem \ref{book} if and only if there exists idempotent ultrafilter $p\in (\mathcal{J},+)$ such that $A\in a_1p+\cdot +a_mp.$

    \item if $A$ satisfies the conclusion of Theorem \ref{book1} if and only if there exists idempotent ultrafilters $p_1,\ldots ,p_m\in (\mathcal{J},+)$ such that $A\in a_1p_1+\cdot +a_mp_m.$
\end{enumerate}
    
\end{thm}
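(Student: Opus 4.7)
The plan is to deduce both parts by pulling back through the linear map $\Phi : \N^m \to \N$ defined by $\Phi(x_1,\ldots,x_m) = a_1 x_1 + \cdots + a_m x_m$, exactly as in the proof of Theorem \ref{book}. The backward direction of each part is immediate: for (1) apply Theorem \ref{book} with $T = \{i\}$, and for (2) apply Theorem \ref{book1} with $T = \{i\}$. So the content lies in the forward direction.

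For the forward direction of (1), I would start from the assumption that $A$ satisfies the conclusion of Theorem \ref{book} at $T = \{i\}$, that is, there exist $\alpha : \G(^\N\N) \to \N$ and $H : \G(^\N\N) \to \G(\N)$ with the $\max H(F) < \min H(G)$ property such that for every chain $G_1 \subsetneq G_2 \subsetneq \cdots$, every selection $f_i \in G_i$, and every $\beta_1 < \cdots < \beta_m$ in $\G(\N)$, one has
\[
\sum_{k=1}^{m} a_k \Biggl( \sum_{i \in \beta_k} \alpha(G_i) + \sum_{i \in \beta_k}\sum_{t \in H(G_i)} f_i(t) \Biggr) \in A.
\]
Because the identity polynomial $i(x)=x$ makes the inner $P(\cdot)$ trivial, this says precisely that the $m$-tuple
\[
\Biggl( \sum_{i \in \beta_1}\Bigl(\alpha(G_i) + \sum_{t \in H(G_i)} f_i(t)\Bigr),\, \ldots,\, \sum_{i \in \beta_m}\Bigl(\alpha(G_i) + \sum_{t \in H(G_i)} f_i(t)\Bigr) \Biggr)
\]
belongs to $\Phi^{-1}(A) \subseteq \N^m$. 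This is exactly statement (1) of Corollary \ref{todo1} applied to $\Phi^{-1}(A)$.

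Corollary \ref{todo1} then yields an idempotent $p \in (\mathcal{J},+)$ with $\Phi^{-1}(A) \in \bigotimes_{i=1}^{m} p$. Combining the standard fact $A \in \widetilde{\Phi}(q) \iff \Phi^{-1}(A) \in q$ with the identity $\widetilde{\Phi}\bigl(\bigotimes_{i=1}^{m} p\bigr) = a_1 p + \cdots + a_m p$ already computed in the proof of Theorem \ref{book}, I conclude $A \in a_1 p + \cdots + a_m p$, as required. Part (2) is identical in structure, with Corollary \ref{newmul2} taking the place of Corollary \ref{todo1} and the single idempotent $p$ replaced by possibly distinct $p_1,\ldots,p_m \in E(\mathcal{J},+)$.

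I do not anticipate a substantive obstacle: essentially everything has been done in Corollaries \ref{todo1} and \ref{newmul2}, and the role of $\Phi$ is just to rewrite a scalar membership in $A$ as a vector membership in $\Phi^{-1}(A)$. The only point that deserves verification is that the combinatorial hypothesis of Theorem \ref{book} at $T = \{i\}$ matches, verbatim and without reshuffling of indices or sets, the hypothesis of Corollary \ref{todo1} applied to $\Phi^{-1}(A)$; this holds because the identity polynomial absorbs no sum, and the same pair $(\alpha,H)$ witnesses both conditions.
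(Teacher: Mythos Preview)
Your proposal is correct and follows essentially the same route as the paper: define $B=\Phi^{-1}(A)$, observe that the hypothesis at $T=\{i\}$ is precisely condition (1) of Corollary \ref{todo1} (resp.\ \ref{newmul2}) for $B$, extract the idempotent(s) in $(\mathcal{J},+)$, and push forward via $\widetilde{\Phi}$ using the identity $\widetilde{\Phi}\bigl(\bigotimes_i p_i\bigr)=a_1p_1+\cdots+a_mp_m$. The paper spells out part (2) and you spell out part (1), but the arguments are interchangeable.
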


\begin{proof}
    The proof of the sufficient condition of both $(1)$ and $(2)$ is similar to the proof of Theorem \ref{book}, \ref{book1}. For the necessary part, both proofs are similar. So we prove the necessary part of $(2).$ 

    Let $B\subseteq \N^m$ be such that $(x_1,\ldots ,x_m)\in B\text{ if } \sum_{i=1}^ma_ix_i\in A.$  Then the set $B$ satisfies the combinatorial conclusion of the Corollary \ref{newmul2}. Hence there exist idempotents $p_1,\ldots ,p_m$ in $(\mathcal{J},+)$, such that $B\in \bigotimes_{i=1}^m p_i.$ Letting $\Phi: \mathbb N^m\rightarrow \mathbb N$ be the map defined by $\Phi (x_1,x_2,\dots,x_m)=\sum^m_{i=1}a_ix_i$, from Theorem \ref{book1}, we have $$\Phi[B]\in \tilde{\Phi}\left(\bigotimes_{i=1}^m p_i\right)=a_1p_1+\cdots+a_mp_m.$$
    But from the construction $\Phi[B]\subseteq A.$ Hence $A\in a_1p_1+\cdots+a_mp_m.$ This completes our proof.
\end{proof}

\noindent
\section{Separation of the Polynomial Milliken-Taylor Systems}

In \cite{d,h}, authors extensively studied and characterized the compatibility of two Milliken-Taylor systems over the set of integers. That means they addressed when two Milliken-Taylor systems generated by two distinct finite sets lie in the same color after a finite coloring of $\N.$ To address their result of \cite{d}, we need the notions of {\it compressed sequence.}

\begin{defn}\textcolor{white}{}
    \begin{enumerate}
        \item Let $S=\{\overrightarrow{a}:\overrightarrow{a}\text{ is a finite sequence in }\N\}.$
        \item The function $c:S\rightarrow S$ deletes any consecutive repeated terms.
        \item An element $\overrightarrow{a}\in S$ is compressed if and only if $\overrightarrow{a}=c(\overrightarrow{a}).$
        \item An equivalence relation on $S$ is defined by $a\approx b$ if and only if there exists a rational $\alpha$ such that $\alpha \cdot c(\overrightarrow{a})=c(\overrightarrow{b}).$
    \end{enumerate}
\end{defn}

The following theorem gives a characterization of two distinct Milliken-Taylor systems lie in the same partition. 
\begin{thm}\label{sep}\cite[Theorem 3.2,3.3]{d} 
Let $\overrightarrow{a},\overrightarrow{b}\in S.$ Then the following are equivalent.
\begin{enumerate}
    \item For every finite coloring there exists two sequences $\langle x_n\rangle_{n}$ and $\langle y_n\rangle_{n}$ such that $MT \left( \overrightarrow{a},\langle x_n\rangle_n\right)\bigcup MT\left( \overrightarrow{b},\langle y_n\rangle_n \right)$ is monochromatic.
    \item $a\approx b.$
\end{enumerate}
    
\end{thm}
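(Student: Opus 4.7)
The plan is to prove the two directions separately. The forward implication, $(2)\Rightarrow(1)$, is a positive statement that reduces to an ultrafilter computation together with the Milliken--Taylor characterization in Theorem \ref{mt2}. The reverse implication, $(1)\Rightarrow(2)$, is the combinatorial separation result and requires constructing an explicit coloring that distinguishes the two systems whenever $\overrightarrow{a}\not\approx\overrightarrow{b}$.

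For $(2)\Rightarrow(1)$, I first note the elementary inclusion $MT(\overrightarrow{a},\langle x_n\rangle_n)\subseteq MT(c(\overrightarrow{a}),\langle x_n\rangle_n)$ for any sequence $\langle x_n\rangle$: whenever two consecutive entries $a_i=a_{i+1}$ of $\overrightarrow{a}$ appear, the contribution $a_i\sum_{t\in F_i}x_t+a_{i+1}\sum_{t\in F_{i+1}}x_t$ collapses to $a_i\sum_{t\in F_i\cup F_{i+1}}x_t$ since $F_i<F_{i+1}$, and iterating identifies each element of $MT(\overrightarrow{a},\cdot)$ with an element of $MT(c(\overrightarrow{a}),\cdot)$. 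So it suffices to realise the compressed systems monochromatically. Assume $c(\overrightarrow{b})=\alpha\,c(\overrightarrow{a})$ with $\alpha=r/s$ for some $r,s\in\N$, and write $c(\overrightarrow{a})=(c_1,\ldots,c_k)$. Pick any idempotent $p\in(\beta\N,+)$; since multiplication by any $n\in\N$ is an endomorphism of $(\N,+)$, its continuous extension sends idempotents to idempotents, so $rp$ and $sp$ are both idempotents. Using $n\cdot(q_1+q_2)=nq_1+nq_2$ and $n\cdot(mq)=(nm)q$ inside $(\beta\N,+)$, I compute
\[
\tilde{g}_{c(\overrightarrow{a})}(rp)=\sum_{j=1}^{k}c_j(rp)=r\sum_{j=1}^{k}c_j p=s\sum_{j=1}^{k}\alpha c_j p=s\sum_{j=1}^{k}c_j(\overrightarrow{b})p=\tilde{g}_{c(\overrightarrow{b})}(sp),
\]
and I call this common ultrafilter $u$. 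Given any finite coloring $\N=\bigcup_i C_i$, some $C_i\in u$. Applying Theorem \ref{mt2} first with the idempotent $rp$ and linear form $g_{c(\overrightarrow{a})}$, and then with $sp$ and $g_{c(\overrightarrow{b})}$, I obtain sum subsystems $\langle x_n\rangle$ and $\langle y_n\rangle$ with $MT(c(\overrightarrow{a}),\langle x_n\rangle)\subseteq C_i$ and $MT(c(\overrightarrow{b}),\langle y_n\rangle)\subseteq C_i$. The elementary inclusion then upgrades these to $MT(\overrightarrow{a},\langle x_n\rangle)\cup MT(\overrightarrow{b},\langle y_n\rangle)\subseteq C_i$.

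For $(1)\Rightarrow(2)$, I argue the contrapositive: I build a finite coloring separating the two systems whenever $\overrightarrow{a}\not\approx\overrightarrow{b}$. The tool is a base-$N$ coloring for $N$ much larger than every entry of $c(\overrightarrow{a})$ and $c(\overrightarrow{b})$, in which each $n\in\N$ is classified by finite combinatorial data extracted from its digit pattern in base $N$ --- in particular the ratio of its leading and trailing nonzero blocks. Passing to a sum subsystem of any underlying IP-sequence, one may assume that each $x_{n+1}$ is enormous relative to $x_n$, so that a typical element of $MT(c(\overrightarrow{a}),\langle x_n\rangle)$ has base-$N$ expansion consisting of non-overlapping blocks whose outermost digits are determined exactly by the first and last entries of $c(\overrightarrow{a})$ (times an appropriate $x_t$). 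The coloring thereby records those entries up to a common scaling; if $c(\overrightarrow{a})$ and $c(\overrightarrow{b})$ are not rational multiples of each other, the recorded invariants must differ between the two MT-systems, preventing them from lying in the same color class.

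The main obstacle is this reverse direction: one must design a coloring that remains effective uniformly over all conceivable choices of $\langle x_n\rangle$ and $\langle y_n\rangle$, whose growth rates are not controlled in advance. The precise choice of $N$, and the verification that sufficiently lacunary sum subsystems force the compressed entries of $\overrightarrow{a}$ and $\overrightarrow{b}$ to appear as identifiable ``digits'' of the MT-elements in the chosen coloring, is the heart of the argument and demands careful bookkeeping in base-$N$ arithmetic.
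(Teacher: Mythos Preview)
The paper does not prove Theorem~\ref{sep} at all: it is quoted verbatim from \cite[Theorems 3.2 and 3.3]{d} as background for the polynomial separation result that follows, and no proof is supplied in this paper. So there is no ``paper's own proof'' to compare your proposal against.

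That said, your outline for $(2)\Rightarrow(1)$ is essentially the standard ultrafilter argument and is correct: the homomorphism $n\mapsto n\cdot(-)$ of $(\mathbb{N},+)$ extends to a homomorphism of $(\beta\mathbb{N},+)$, so the computation $\sum_j c_j(rp)=\sum_j(rc_j)p=\sum_j(s\alpha c_j)p=\sum_j(\alpha c_j)(sp)$ goes through, and Theorem~\ref{mt2} then produces the required sum subsystems. This is in the same spirit as the easy direction in \cite{d}. Your $(1)\Rightarrow(2)$ sketch, however, is only a heuristic: you describe a base-$N$ coloring recording ``leading and trailing blocks'' but do not specify the invariant precisely, and you implicitly assume you may pass to a lacunary sum subsystem \emph{before} fixing the coloring, whereas the contrapositive requires a single coloring that works against \emph{all} sequences. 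The actual construction in \cite{d} is considerably more delicate --- it encodes the full compressed sequence (not just its endpoints), handles carries, and verifies that any monochromatic MT-system forces the compressed vector up to scalar. What you have written identifies the right general strategy but leaves the substantive combinatorial work undone.
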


In \cite{h}, authors extended Theorem \ref{sep} over the set of integers $\Z.$ They proved Theorem \ref{sep} under the condition that $\overrightarrow{a},\overrightarrow{b}$ are two finite sets from $\Z\setminus \{0\}.$
From \cite{bhw} an immediate question appear if there exists any polynomial extension of Theorem \ref{sep}. Throughout this article we have seen the ultrafilter characterizations of polynomial patterns are rich in today's technique. But here we prove the polynomial extension of one side of Theorem \ref{sep}. Before that, we introduce a polynomial version of an equivalent compressed sequence.

\begin{defn}
    Let $m,n\in \mathbb N$, and $f:\mathbb N^m\rightarrow \mathbb N$, $g:\mathbb N^n\rightarrow \mathbb N$ be two polynomials of finite degree defined by $f(x_1,\dots,x_m)=\sum a_{i_1\cdots i_{m}} x^{i_1}_1\cdots x^{i_m}_m$, and $g(x_1,\dots,x_n)=\sum b_{j_1\cdots j_n}x^{j_1}_1\cdots x^{j_n}_n$. We say $f$ and $g$ are equivalent, i.e. $f\approx g$ if and only if,
    \begin{enumerate}
        \item both $f,g$ have same degree, and
        \item there exists a rational $\alpha\in \mathbb Q$ such that,
\[b_{i_1\cdots i_m}=\alpha^{i_1+\dots+i_m}a_{i_1\dots i_m}\forall\,(i_1,\dots, i_m).
\]
\end{enumerate}
\end{defn}

For example the polynomials $f(x,y,z)=x^3+2xy^2z+xz+z^2$, and $g(x,y,z)=5^3x^3+12\cdot 5^4 xy^2z+5^2xz+5^2z^2$ are equivalent.

Now we introduce the notion of the polynomial Milliken-Taylor system. This notion generalizes the notion of the Milliken-Taylor system \ref{defmillikentaylor}.

\begin{defn}[Polynomial Milliken-Taylor system]

Let $f:\mathbb N^m\rightarrow\mathbb N$ be any polynomial with no constant term. Let $\langle x_n\rangle^\infty_{n=1}$ be any sequence in $\mathbb N$. Define
\[
MT\left(f,\langle x_n\rangle^\infty_{n=1}\right)=\left\{ f\left(\sum_{i\in\beta_1} x_i,\dots,\sum_{i\in\beta_m} x_i\right):\beta_1<\dots<\beta_m\right\}.
\]

\end{defn}

The following theorem shows that if two polynomials are equivalent then their Milliken-Taylor image for some sequences lie in the same color after partitioning $\N$ into finitely many pieces.

\begin{thm}[Separating Polynomial Milliken-Taylor theorem]

Let $r,m\in\mathbb N$, and $f,g:\mathbb N^m\rightarrow\mathbb N$ be two finite degree polynomials with no constant term such that $f\approx g$. Then if $\mathbb N=\bigcup^r_{i=1}A_i$ is any $r$-coloring, there are two sequences $\langle x_n\rangle^\infty_{n=1}$, and
$\langle y_n\rangle^\infty_{n=1}$ in $\N$, and $i\in\{1,\dots,r\}$ such that $MT(f,\langle x_n\rangle^\infty_{n=1})\bigcup MT(g,\langle y_n\rangle^\infty_{n=1})\subseteq A_i$.
    
\end{thm}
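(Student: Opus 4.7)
The strategy is to apply the polynomial Milliken--Taylor theorem (the polynomial version stated as Theorem~\ref{mt2}) twice, with a pair of additive idempotents $p,q\in(\beta\mathbb N,+)$ chosen so that
\[
\tilde g\Bigl(\bigotimes_{i=1}^m q\Bigr)=\tilde f\Bigl(\bigotimes_{i=1}^m p\Bigr).
\]
The hypothesis $f\approx g$ with common ratio $\alpha\in\mathbb Q$ yields at once the functional identity $g(u_1,\ldots,u_m)=f(\alpha u_1,\ldots,\alpha u_m)$. Writing $\alpha=s/t$ in lowest terms, this identity makes sense over $\mathbb N$ as soon as each $u_i\in t\mathbb N$, so that $\alpha u_i=(s/t)u_i\in s\mathbb N\subseteq\mathbb N$.

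To set up the two idempotents, pick any additive idempotent $q\in(\beta\mathbb N,+)$ with $t\mathbb N\in q$. Such an idempotent exists because $\overline{t\mathbb N}$ is a closed subsemigroup of $(\beta\mathbb N,+)$ and hence carries idempotents by Ellis' theorem. Define the additive homomorphism $\phi\colon t\mathbb N\to\mathbb N$ by $\phi(tk)=sk$, and set $p=\tilde\phi(q)\in\beta\mathbb N$. Since $\phi$ is a homomorphism, the ultrafilter $p$ is again an idempotent of $(\beta\mathbb N,+)$.

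The heart of the argument is the displayed identity. Set $\Phi=\phi\times\cdots\times\phi\colon(t\mathbb N)^m\to\mathbb N^m$. For $A\subseteq\mathbb N$ the functional identity gives $g^{-1}(A)\cap(t\mathbb N)^m=\Phi^{-1}(f^{-1}(A))$, and since $(t\mathbb N)^m\in\bigotimes_{i=1}^m q$ we obtain
\[
A\in\tilde g\Bigl(\bigotimes_{i=1}^m q\Bigr)\iff \Phi^{-1}\bigl(f^{-1}(A)\bigr)\in\bigotimes_{i=1}^m q\iff f^{-1}(A)\in\tilde\Phi\Bigl(\bigotimes_{i=1}^m q\Bigr).
\]
A routine iterated $q$--$\lim$ calculation, which successively rewrites each $q$--$\lim_{x_j}\phi(x_j)$ as $p$--$\lim_{y_j}y_j$, shows that $\tilde\Phi\bigl(\bigotimes_{i=1}^m q\bigr)=\bigotimes_{i=1}^m p$, so the last condition becomes $A\in\tilde f\bigl(\bigotimes_{i=1}^m p\bigr)$, as required.

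Given the coloring $\mathbb N=\bigcup_{i=1}^rA_i$, pick the unique $i$ with $A_i\in\tilde f(\bigotimes_{j=1}^m p)=\tilde g(\bigotimes_{j=1}^m q)$. Applying Theorem~\ref{mt2} to $f$ with the idempotent $p$ produces a sequence $\langle x_n\rangle_{n=1}^\infty$ with $MT(f,\langle x_n\rangle_{n=1}^\infty)\subseteq A_i$, and applying the same theorem to $g$ with $q$ produces a sequence $\langle y_n\rangle_{n=1}^\infty$ with $MT(g,\langle y_n\rangle_{n=1}^\infty)\subseteq A_i$, completing the proof. The main obstacle is the tensor identity; the essential trick is to force $q$ to concentrate on $t\mathbb N$ so that the ``rational multiplication by $\alpha$'' becomes an honest semigroup homomorphism, after which the identity reduces to the standard functoriality of tensor products of ultrafilters under continuous homomorphisms.
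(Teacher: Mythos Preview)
Your proof is correct but takes a more elaborate, ultrafilter-heavy route than the paper. The paper never introduces a second idempotent or a tensor identity: it simply defines a third polynomial $h_f(x_1,\ldots,x_m)=\sum p^{i_1+\cdots+i_m}a_{i_1\cdots i_m}x_1^{i_1}\cdots x_m^{i_m}$ (where $\alpha=p/q$), applies the polynomial Milliken--Taylor theorem \emph{once} to $h_f$ to obtain a single sequence $\langle z_n\rangle$, and then sets $x_n=p\cdot z_n$ and $y_n=q\cdot z_n$. A direct substitution shows $MT(f,\langle x_n\rangle)=MT(h_f,\langle z_n\rangle)=MT(g,\langle y_n\rangle)$, so the two Milliken--Taylor systems are not merely contained in the same color class but are literally the same set. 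Your approach, by contrast, produces two a priori unrelated sequences $\langle x_n\rangle$ and $\langle y_n\rangle$ from two separate applications of Theorem~\ref{mt2}, linked only through the equality $\tilde f(\bigotimes p)=\tilde g(\bigotimes q)$. What your argument buys is a cleaner conceptual picture of \emph{why} the equivalence $f\approx g$ matters at the level of ultrafilters (it is exactly the condition making the two pushforward ultrafilters coincide), and your framework would adapt more readily if one wanted to weaken the link between $p$ and $q$. The paper's argument is shorter, avoids the tensor-product functoriality computation, and yields the sharper conclusion that the two systems coincide.
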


\begin{proof}

Let $\alpha=\frac{p}{q}$. Let $f,g$ be two polynomials defined by:
\[
f(x_1,\dots,x_m)=\sum a_{i_1\cdots i_m}
x_1^{i_1}\cdots x_m^{i_m},\]
and 
\[g(x_1,\dots,x_m)=\sum b_{j_1}\cdots b_{j_m}
x_1^{j_1}\cdots x_
m^{j_m},\] where
$b_{i_1\cdots i_m}=\alpha^{i_1+\cdots+i_m}a_{i_1\cdots i_m}$ for every $(i_1,\dots,i_m)\in(\mathbb N\cup\{0\})^m.$

Let $h_f:\mathbb N^m\rightarrow \mathbb N$ be a new polynomial defined by $h_f(x_1,\dots,x_m)=\sum c_{i_1\cdots i_m}x^{i_1}_1\cdots x^{i_m}_m$, where $c_{i_1\cdots i_m}=p^{i_1+\cdots +i_m}a_{i_1\cdots i_m}$ for each $(i_1,\dots,i_m)\in (\mathbb N \cup \{0\})^m$. 
 From Theorem \ref{mt2}, there exists  $i\in\{1,\dots,r\}$, and a sequence $\langle z_n\rangle^{\infty}_{n=1}$ in $\mathbb N$ s.t. $MT(h_f,\langle z_n\rangle^{\infty}_{n=1})\subseteq A_i$. For each $n\in\mathbb N$, define $x_n=p\cdot z_n$, and $y_n=q\cdot z_n$.
 Then, $MT(f,\langle x_n\rangle^
{\infty}_{n=1})=MT(g,\langle y_n\rangle^{\infty}_{n=1})=MT(h_f,\langle z_n\rangle^{\infty}_{n=1})$. This completes the proof.
\end{proof}

\section*{Acknowledgement}  The first named author is supported by NBHM postdoctoral fellowship with reference no: 0204/27/(27)/2023/R \& D-II/11927. We are also thankful to Amit Roy for his technical assistantship.

\end{document}